\def\subjclass#1{{\renewcommand{\thefootnote}{}%
\footnote{\emph{Mathematics Subject Classification (2010):} #1}}}
\def\keywords#1{{\renewcommand{\thefootnote}{}%
\footnote{\emph{Keywords:} #1}}}
\newtheorem{thm}{Theorem}[section]
\newtheorem{cor}[thm]{Corollary}
\newtheorem{lem}[thm]{Lemma}
\newtheorem{prop}[thm]{Proposition}
\newtheorem{conj}[thm]{Conjecture}
\newtheorem{fact}[thm]{Fact}
\newtheorem{question}[thm]{Question}
\theoremstyle{definition}
\newtheorem{defin}[thm]{Definition}
\newtheorem{rem}[thm]{Remark}
\numberwithin{equation}{section}
\newcommand{\R}{\mathbb{R}}
\newcommand{\Ros}{\mathcal{R}}
\newcommand{\X}{\mathcal{X}}
\newcommand{\elinf}{[\omega]^\omega}
\newcommand{\Sil}{\mathcal{N}}
\newcommand{\St}{\mathcal{S}}
\newcommand{\G}{\mathcal{G}}
\newcommand{\I}{\textbf{I}}
\newcommand{\II}{\textbf{II}}
\newcommand{\A}{\mathcal{A}}
\newcommand{\K}{\mathcal{K}}
\DeclareMathOperator{\supp}{supp}
\DeclareMathOperator{\spa}{span}
\DeclareMathOperator{\Seq}{Seq}
\DeclareMathOperator{\bs}{bs}
\newcommand{\st}{\text{{\calligra \LARGE s}}\,}
\newcommand{\stp}{\text{{\calligra \large s}}\,}
\newcommand{\Ast}{A_{\text{{\calligra \Large s}}\,}}
\newcommand{\Asti}[1]{A_{{\text{{\calligra \Large s}}\,}_{#1}}}
\newcommand{\concat}{%
  \mathbin{\raisebox{1ex}{\scalebox{.7}{$\frown$}}}%
}
\newcommand{\alaligne}{~\vspace*{\topsep}\nobreak\@afterheading}
\begin{document}

%%%%% To ease editing, add:

%\baselineskip=17pt

%%%%%%%%%%%%%%%%

%% In the running head, give an abbreviation of the title. 
%\titlerunning{Ramsey theory without pigeonhole principle}

%\AtEndDocument{%
%  \par
%  \bigskip
%    \textsc{\footnotesize N. de Rancourt, Institut de Mathématiques de Jussieu - Paris Rive Gauche, Universit\'e Paris Diderot, Bo\^ite Courrier 7012, 75205 Paris Cedex 13, France}\\
%    \textit{E-mail address}: \texttt{derancou@dma.ens.fr}}

\title{Ramsey theory without pigeonhole principle and the adversarial Ramsey principle}

\author{N. de Rancourt}

\date{}

\maketitle

%\address{N. de Rancourt: Institut de Mathématiques de Jussieu-Paris Rive Gauche, Universit\'e Paris Diderot, Bo\^ite Courrier 7012, 75205 Paris Cedex 13, France; \email{noe.de-rancourt@imj-prg.fr}}

\subjclass{05D10, 03E60}

%%%%%%%%

\begin{abstract}
We develop a general framework for infinite-dimensional Ramsey theory with and without pigeonhole principle, inspired by Gowers' Ramsey-type theorem for block sequences in Banach spaces and by its exact version proved by Rosendal. In this framework, we prove the adversarial Ramsey principle for Borel sets, a result conjectured by Rosendal that generalizes at the same time his version of Gowers' theorem and Borel determinacy of games on integers.

%% Keywords are optional
\keywords{Infinite-dimensional Ramsey theory, Gowers' games, Determinacy}
\end{abstract}

\section{Introduction}\label{par1}

This paper has two main goals. The first is to develop an abstract formalism for infinite-dimensional Ramsey theory, enabling to prove both Ramsey results with a pigeonhole principle (like Mathias--Silver's theorem \cite{mathias, silver}) and Ramsey results without a pigeonhole principle, like Gowers' Ramsey-type theorem for block sequences in Banach spaces \cite{GowersRamsey} and its exact version given by Rosendal \cite{RosendalGowers}. The second goal is to prove the \emph{adversarial Ramsey principle} for Borel sets, a result conjectured by Rosendal \cite{Rosendaladverse}, unifying his exact version of Gowers' theorem with Borel determinacy of games on integers. In order to motivate the rest of the paper and to recall the statement of the latter results and conjectures, let us begin with some history.

\smallskip

Infinite-dimensional Ramsey theory is a branch of Ramsey theory where we color \emph{infinite-dimensional} objects, that are, \emph{sequences of points} of some space, and where we want to find homogeneous subspaces. The fundamental result in infinite-dimensional Ramsey theory is Mathias--Silver's theorem, proved independently in 1968 by Mathias \cite{mathias} and in 1970 by Silver \cite{silver}, saying that if $\X$ is an analytic subset of $\elinf$ (the set of all infinite subsets of $\omega$, endowed with the topology inherited from the Cantor space $\mathcal{P}(\omega) = 2^\omega$ with the product topology) then for every infinite $M \subseteq \omega$, there exists an infinite $N \subseteq M$ (called a \emph{homogeneous} set) such that either for every infinite $S \subseteq N$, we have $S \in \X$, or for every infinite $S \subseteq N$, we have $S \in \X^c$. (A set $\X \subseteq \elinf$ satisfying the conclusion of this theorem will be called a \emph{Ramsey set}.) An important remark is that in this theorem (and in particular, in all of its proofs using the classical technique of combinatorial forcing, see for example \cite{todorcevicorange}, Chapter 1, for a presentation of this technique), the sets $M$ and $N$ are often seen as \emph{subspaces} (elements of a poset) while the set $S$ is rather seen as an \emph{infinite sequence}, the increasing sequence of its elements; this distinction between subspaces and sequences of points will appear in all results presented in this paper.

\smallskip

The proof of Mathias--Silver's theorem uses in an essential way the \emph{pigeonhole principle}, i.e. the trivial fact that for every infinite $M \subseteq \omega$ and every $A \subseteq \omega$, there exists an infinite $N \subseteq M$ such that either $N \subseteq A$, or $N \subseteq A^c$. In the decades that followed the proof of this theorem, several similar results arose in different contexts (words, trees, etc.). All of these results have the same form: we color infinite sequences of points satisfying some structural condition (being increasing, being block sequences, etc.) and the theorem ensures that we can find a monochromatic subspace. The proof of each of these results relies on an analogue of the pigeonhole principle, whose proof is in most cases way less trivial as in $\omega$. In general, a \emph{pigeonhole principle} is a one-dimensional Ramsey result, i.e. a result where we color points and we want to find a monochromatic subspace. A lot of these pigeonhole principles, and the infinite-dimensional Ramsey results they imply, can be found in Todor\v{c}evi\'c's book \cite{todorcevicorange}, where a general framework to deduce an infinite dimensional Ramsey result from its associated pigeonhole principle is also developped.

\smallskip

The first infinite-dimensional Ramsey-type result that was not relying on a pigeonhole principle was proved by Gowers, in the 90's. The aim of Gowers was to solve a celebrated problem asked by Banach, the homogeneous space problem, asking whether $\ell_2$ was the only infinite-dimensional Banach space, up to isomorphism, that was isomorphic to all of its closed, infinite-dimensional subspaces. Gowers proved a dichotomy \cite{GowersRamsey} that, combined with a result by Komorowski and Tomczak-Jaegermann \cite{KomorowskiTomczakJaegermann}, provided a positive answer to Banach's question. The proof of this dichotomy relies on a Ramsey-type theorem in separable Banach spaces, that we will state now. The reader who is not familiar with Banach space geometry can skip this part, since it will only be relevant to understanding sections \ref{par4} and \ref{par5} of this paper.

\smallskip

In this paper, to save writing, we will only consider real Banach spaces, but the results we present here adapt to the complex case. Let $E$ be a Banach space. Recall that a \emph{(Schauder) basis} of $E$ is a sequence $(e_i)_{i \in \omega}$ such that every $x \in E$ can be written in a unique way as an infinite sum $\sum_{i = 0}^\infty x^i e_i$, where $x^i \in \R$. In this case, the \emph{support} of the vector $x$, denoted by $\supp(x)$, is defined as the set $\{i \in \omega \mid x^i \neq 0\}$. A \emph{block sequence} of $(e_i)$ is an infinite sequence $(x_n)_{n\in \omega}$ of nonzero vectors of $E$ with $\supp(x_0) < \supp(x_1) < \ldots$ (here, for two nonempty sets of integers $A$ and $B$, the notation $A < B$ means that $\forall i \in A \; \forall j \in B \; i < j$). It can be shown that a block sequence is a basis of the closed subspace it spans, such a space being called a \emph{block subspace}.  A basis, or a block sequence, is said to be \emph{normalized} if all of its terms have norm $1$. In the rest of this article, unless otherwise specified, every basis and every block sequence in a Banach space will be normalized.

\smallskip

For $X$ a block subspace of $E$, let $[X]$ denote the set of block sequences all of whose terms are in $X$. We can equip $[E]$ with a natural topology by seeing it as a subspace of $(S_E)^\omega$ with the product topology (where $S_E$ denotes the unit sphere of $E$ with the norm topology), which makes it a Polish space. For $\X \subseteq [E]$ and $\Delta = (\Delta_n)_{n \in \omega}$ a sequence of positive real numbers, let $(\X)_\Delta$ denote the \emph{$\Delta$-expansion} of $\X$, that is, the set $\{(x_n)_{n \in \omega} \in [E]\, | \,\exists (y_n)_{n \in \omega} \in \X \; \forall n \in \omega \; \|x_n - y_n\| \leqslant \Delta_n\}$. In order to state Gowers' theorem, we need a last definition.

\begin{defin}

Let $X$ be a block subspace of $E$. \emph{Gowers' game} below $X$, denoted by $G_X$, is the following infinite two-players game (whose players will be denoted by \I{} and \II{}):

\smallskip

\begin{tabular}{ccccccc}
\textbf{I} & $Y_0$ & & $Y_1$ & & $\hdots$ & \\
\textbf{II} & & $y_0$ & & $y_1$ & & $\hdots$ 
\end{tabular}

\smallskip

\noindent where the $Y_i$'s are block subspaces of $X$, and the $y_i$'s are normalized vectors of $E$ with finite support, with the constraints for \II{} that for all $i \in \omega$, $y_i \in Y_i$ and $\supp(y_i) < \supp(y_{i + 1})$. The outcome of the game is the sequence $(y_i)_{i \in \omega} \in [E]$.

\end{defin}

In this paper, when dealing with games, we shall use a convention introduced by Rosendal: we omit to set a winning condition when defining a game, but rather associate an \emph{outcome} to the game and say that a player has a strategy to force this outcome to belong to some fixed set. For example, saying that player \II{} has a strategy to reach a set $\X \subseteq [E]$ in the game $G_X$ means that she has a winning strategy in the game whose rules are those of $G_X$ and whose winning condition is the fact that the outcome belongs to $\X$.

\smallskip

We can now state Gowers' theorem:

\begin{thm}[Gowers' Ramsey-type theorem]\label{GowersThm}

Let $\X \subseteq [E]$ be an analytic set, $X \subseteq E$ be a block subspace, and $\Delta$ be an infinite sequence of positive real numbers. Then there exists a block subspace $Y$ of $X$ such that either $[Y] \subseteq \X^c$, or player \II{} has a strategy in $G_Y$ to reach $(\X)_\Delta$.

\end{thm}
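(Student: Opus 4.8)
The plan is to adapt the classical combinatorial-forcing proof of Mathias--Silver's theorem, using Gowers' game $G_Y$ in place of the pigeonhole principle in $\omega$ and invoking the Gale--Stewart theorem to compensate for the absence of such a principle for block sequences. Since $\X$ is analytic, fix once and for all a Souslin scheme for it, so that $(x_n) \in \X$ is witnessed by a branch $\sigma \in \omega^\omega$ through an associated tree; throughout the construction one only ever produces finite approximations of such witnessing data, and I would carry the natural-number coordinates alongside the block vectors in a mildly enriched game $\tilde G_Y$. I also fix a countable dense family of finite block sequences with terms of finite rational support in $(e_i)$, and decompose each $\Delta_n$ into infinitely many strictly positive ``budgets'' $\Delta_n = \sum_k \Delta_{n,k}$, so that every perturbation introduced later --- by a fusion, or by replacing a finite approximation by a genuine block sequence --- is charged against a fixed, never-exhausted share of $\Delta$. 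For a finite partial play $s$ (with block vectors of finite support and with the witness read off so far) and a block subspace $Y$, say that \emph{$Y$ accepts $s$} if player \II{} has a strategy in $\tilde G_Y$ all of whose outcomes $t$ satisfy that $s \concat t$, together with the witness completed along the play, lies in the $\Delta'$-neighbourhood of $\X$ prescribed by the Souslin scheme, where $\Delta'$ is the budget still unspent after $s$; say that \emph{$Y$ rejects $s$} if no block subspace of $Y$ accepts $s$. The elementary but essential observation is that both notions are hereditary: a play of $\tilde G_Z$ for $Z$ a block subspace of $Y$ is a legal play of $\tilde G_Y$ in which \II's vectors automatically remain in $Z$, so any witnessing strategy restricts; heredity of ``rejects'' is then immediate.

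The first move is a fusion producing a block subspace $Y_0 \subseteq X$ that \emph{decides} --- accepts or rejects --- every finite block sequence from the countable family with terms in $Y_0$: enumerate that family, shrink at step $n$ to a subspace accepting the $n$-th sequence whenever the current subspace does not reject it, and diagonalise along $(e_i)$, the budget decomposition absorbing the perturbations produced by the diagonalisation. If $Y_0$ accepts the empty sequence, then by definition \II{} has a strategy in $G_{Y_0}$ to reach $(\X)_\Delta$, and we are done in the second alternative with $Y = Y_0$. So assume $Y_0$ rejects the empty sequence; it remains to find a block subspace $Z$ with $[Z] \subseteq \X^c$.

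The core of the argument --- and the point at which the pigeonhole principle would be invoked in the classical setting --- is the propagation lemma: \textbf{if $Y$ rejects $s$, then some block subspace $Z$ of $Y$ rejects $s \concat (z)$ for every normalized finitely supported $z \in Z$ with $\supp(z) > \supp(s)$.} I would prove it by contradiction. If no such $Z$ existed, then below every block subspace of $Y$ there would be a normalized vector $z$ and a block subspace accepting $s \concat (z)$; this data packages into a strategy for \II{} in an auxiliary infinite game --- a variant of $\tilde G_Y$ in which, after \I's opening move, \II{} plays a single vector $z$ and thereafter follows an accepting strategy for $s \concat (z)$ --- which, the Souslin scheme being fixed, can be arranged to have a closed (equivalently, from \I's side, open) winning condition, hence is determined by the Gale--Stewart theorem. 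A winning strategy for \II{} there assembles into a witnessing strategy showing that $Y$ itself accepts $s$, contradicting rejection; a winning strategy for \I{} delivers the desired $Z$. Granting the lemma, alternate it with a fusion, choosing vectors $z_0 < z_1 < \cdots$ spanning a block subspace $Z^* \subseteq Y_0$, so as to guarantee that $Z^*$ rejects \emph{every} finite block sequence from the countable family with all terms in $Z^*$.

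Finally I would deduce $[Z^*] \subseteq \X^c$. By Gale--Stewart determinacy of the relevant closed games, the rejection property of $Z^*$ says exactly that in $\tilde G_{Z'}$, for every block subspace $Z' \subseteq Z^*$, player \I{} has a strategy forcing the enriched play out of the Souslin tree; a block sequence of $Z^*$ lying in $\X$ would come with a witnessing branch, and threading such a branch against these strategies --- while the discreteness of the witness coordinate supplies the one pigeonhole step that is actually available, and the unspent $\Delta$-budgets cover the approximation tolerances consumed in the threading --- yields a contradiction. The two steps I expect to be genuinely hard are the propagation lemma (isolating the correct auxiliary game, verifying it is determined, and checking that each player's winning strategy has the stated effect), and the bookkeeping around the $\Delta$-expansion and the Souslin scheme: the ineliminable gap between finite approximations of block sequences and actual block sequences, accumulated over infinitely many stages, is precisely what forces an expansion into the conclusion and makes Gowers' theorem an approximate, rather than exact, Ramsey statement.
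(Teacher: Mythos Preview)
Your approach is valid but genuinely different from the paper's. The paper does not prove Gowers' theorem by direct combinatorial forcing; it derives it as a special case of the machinery built in Sections~\ref{par2}--\ref{par5}: working in the canonical approximate Gowers space $\G_E$, it discretizes to a countable Gowers space (proof of Theorem~\ref{ThmApproxRamsey}), applies the abstract Rosendal theorem (Theorem~\ref{abstractRosendal}) --- itself obtained from the adversarial Ramsey principle (Theorem~\ref{main}) via Borel determinacy of a Kastanas-type game on reals --- and finally uses the system of precompact sets $(\K_E,\oplus_E)$ and the strong asymptotic game (Theorem~\ref{AbstractApproxExact}) to convert the asymptotic-game side of the dichotomy into the genuine conclusion $[Y]\subseteq\X^c$. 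Your proposal is essentially Gowers' original accept/reject argument with an unfolding for the analytic case. The paper's route is far longer for this single statement but buys a uniform framework covering the adversarial Ramsey principle, the pigeonhole and pigeonhole-free cases, and many other Ramsey-type theorems at once; your route is self-contained and, like the paper's path through Theorem~\ref{abstractRosendal}, needs only open (Gale--Stewart) determinacy rather than full Borel determinacy.

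One point deserves care. Your sketch of the propagation lemma via ``auxiliary game + Gale--Stewart'' is not the standard argument, and as written it is unclear why a winning strategy for \I{} in that game would produce a \emph{single} subspace $Z$ rejecting every one-point extension: \I{}'s strategy in a Gowers-type game selects different subspaces depending on \II{}'s moves, not one fixed $Z$. The usual proof avoids determinacy at this step entirely and works only \emph{after} your first fusion. Once $Y_0$ decides every finite sequence, both ``accepts'' and ``rejects'' are downward hereditary, so every $Z\subseteq Y_0$ agrees with $Y_0$; then if $A=\{z: Y_0\text{ accepts }s\concat z\}$ met every block subspace of $Y_0$, player \II{} could, in $G_{Y_0}$ from position $s$, answer each move $V\subseteq Y_0$ by some $z\in V\cap A$ and thereafter follow a $G_{Y_0}$-accepting strategy for $s\concat z$ --- which expects precisely subspaces of $Y_0$, exactly what \I{} plays --- contradicting that $Y_0$ rejects $s$. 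Hence some $Z\subseteq Y_0$ avoids $A$, and that is the propagation. It is this direct argument, not a determinacy appeal, that makes your iteration go through; you should replace the Gale--Stewart detour by it.
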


While one of the possible conclusions of this theorem, $[X] \subseteq \X^c$, is very similar to ``For every infinite $S \subseteq M$, we have $S \in \X^c$'' in Mathias--Silver's theorem, the other one is much weaker, for to reasons: the use of metrical approximation and the use of a game. As we will see later, the necessity of the approximation is due to a lack of finiteness, while the necessity for one of the possible conclusions to involve a game matters much more and is due to the lack of a pigeonhole principle in this context. In some Banach spaces, a pigeonhole principle holds, and in these spaces, Gowers gave a strengthening of his theorem, involving no game, that we will introduce now. We start by stating the general form of the relevant pigeonhole principle in the context of Banach spaces; since an exact pigeonhole principle is never satisfied in this context, and would anyways be useless since approximation is needed for other reasons, we will only state an approximate pigeonhole principle. For a Banach space $E$, a set $A \subseteq S_E$, and $\delta > 0$, denote by $(A)_\delta$ the \textit{$\delta$-expansion} of $A$, that is, the set $\{x \in S_E \, \mid \, \exists y \in A \; \|x - y\| \leqslant \delta\}$.

\begin{defin}

Say that a Banach space $E$ with a Schauder basis satisfies the \emph{approximate pigeonhole principle} if for every $A \subseteq S_E$, for every block subspace $X \subseteq E$, and for every $\delta > 0$, there exists a block subspace $Y \subseteq X$ such that either $S_Y \subseteq A^c$, or $S_Y \subseteq (A)_\delta$.

\end{defin}

Recall that an infinite-dimensional Banach space $E$ is said to be \emph{$c_0$-saturated} if $c_0$ can be embedded in all of its infinite-dimensional, closed subspaces. A combination of results by Milman \cite{milman}, Gowers \cite{GowersLipschitz}, and Odell and Schlumprecht \cite{OdellSchlumprecht}, shows the following:

\begin{thm}\label{pigeonc0}

A space $E$ with a Schauder basis satisfies the approximate pigeonhole principle if an only if it is $c_0$-saturated.

\end{thm}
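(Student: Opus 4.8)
The plan is to rephrase the approximate pigeonhole principle as a form of \emph{oscillation stability} and then assemble the three cited results. First a routine translation: for a fixed block subspace $X$, the approximate pigeonhole principle holds below $X$ if and only if for every bounded $1$-Lipschitz $f\colon S_X\to\R$ and every $\varepsilon>0$ there is a block subspace $Y\subseteq X$ on which $f$ varies by at most $\varepsilon$. Indeed, given $A\subseteq S_X$ and $\delta>0$ one applies the oscillation-stability statement to $f=\min(d(\cdot,A),1)$ with error $\delta/2$: if $\inf_{S_Y}f=0$ then $f\leqslant\delta/2$ everywhere on $S_Y$, so $S_Y\subseteq(A)_\delta$; otherwise $d(\cdot,A)$ is bounded away from $0$ on $S_Y$, so $S_Y\subseteq A^c$. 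Conversely, given $f$ and $\varepsilon$ one iterates the dichotomy finitely many times over a partition of the (bounded) range of $f$ into intervals of length $\varepsilon/3$, using the Lipschitz bound to keep the $\delta$-expansions inside neighbouring intervals. Hence $E$ satisfies the approximate pigeonhole principle if and only if every block subspace of $E$ is \emph{block-oscillation-stable} in this sense, and it remains to match that property with $c_0$-saturation.

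For the implication from $c_0$-saturation I would reduce everything to $c_0$ itself. Gowers' theorem on Lipschitz functions on classical spaces \cite{GowersLipschitz} says precisely that $c_0$ is approximately oscillation stable: every Lipschitz function on $S_{c_0}$ is, on the unit sphere of some block subspace of $c_0$, constant up to a prescribed error; the proof is a stabilization argument on the countable set of finitely supported rational vectors combined with the pigeonhole principle on $\omega$. To transfer this, let $E$ be $c_0$-saturated and $X\subseteq E$ a block subspace. By $c_0$-saturation $X$ contains an isomorphic copy of $c_0$, and a Bessaga--Pe\l czy\'nski gliding-hump perturbation of a basis of that copy produces a normalized block sequence $(z_i)$ of $(e_i)$ spanning a block subspace $Z\subseteq X$ isomorphic to $c_0$, via an isomorphism $T$ sending the unit vector basis of $c_0$ onto $(z_i)$. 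Given a bounded Lipschitz $f$ on $S_X$, apply Gowers' theorem to the Lipschitz function $u\mapsto f(Tu/\|Tu\|)$ on $S_{c_0}$; the block subspace of $c_0$ it produces is carried by $T$ onto a block subspace $Y\subseteq Z\subseteq X$ of $E$ (a block sequence of the $c_0$-basis goes to a block sequence of $(z_i)$, hence of $(e_i)$) on which $f$ has small oscillation.

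For the converse I argue contrapositively. If $E$ is not $c_0$-saturated then some closed infinite-dimensional subspace omits $c_0$, and a gliding-hump perturbation yields a block subspace $X$ of $E$ that omits $c_0$. The key input is that such an $X$ is \emph{not} block-oscillation-stable: it has a block subspace $X'$ carrying a bounded Lipschitz $f$ on $S_{X'}$ whose oscillation on $S_Y$ stays above some fixed $\eta>0$ for every block subspace $Y\subseteq X'$. Granting this, the approximate pigeonhole principle below $X'$ would, by the first paragraph, make $X'$ block-oscillation-stable and force $f$ within $\eta/2$ of a constant on some block subspace, a contradiction; so the principle fails below $X'$, hence in $E$. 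To establish the key input one uses the Milman--Odell--Schlumprecht distortion theory: Milman's analysis of the spectrum of functions on the sphere \cite{milman}, together with Krivine's theorem, reduces the question to the asymptotic $\ell_p$- or $c_0$-structure of $X$; for $\ell_p$ with $1\leqslant p<\infty$ the function $\|\cdot\|_\infty$ already witnesses non-oscillation-stability, but ruling out an arbitrary block subspace omitting $c_0$ --- in particular one containing no isomorphic copy of any $\ell_p$ --- requires essentially the full strength of Odell and Schlumprecht's solution of the distortion problem \cite{OdellSchlumprecht}.

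The main obstacle is exactly this last input: it is not proved from scratch but imported wholesale from distortion theory, whose proofs are far harder than anything else surrounding this statement. On our side the only genuine work is the two-way translation between the approximate pigeonhole principle and block-oscillation-stability, the gliding-hump perturbations that keep every subspace a block subspace of $(e_i)$, and the verification that Gowers' $c_0$ theorem transfers along an isomorphism --- all routine.
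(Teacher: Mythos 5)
The paper does not prove Theorem \ref{pigeonc0} at all: it is quoted as a combination of results of Milman \cite{milman}, Gowers \cite{GowersLipschitz} and Odell--Schlumprecht \cite{OdellSchlumprecht}, which is exactly the assembly you propose, so there is no internal argument to compare with. Your routine glue is fine: the two-way translation between the approximate pigeonhole principle and approximate oscillation stability of bounded Lipschitz functions on spheres of block subspaces is the standard level-set argument (modulo the harmless rephrasing ``$\inf_{S_Y}f\leqslant\delta/2$'' instead of ``$\inf_{S_Y}f=0$''), and the transfer of Gowers' $c_0$ theorem along an isomorphism onto a copy of $c_0$ spanned by a block sequence is correct, provided the gliding hump is performed relative to the basis of the given block subspace $X$ so that the copy really sits inside $X$.

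Your account of the converse, however, contains a concrete error and inverts where the hard input sits. The function $\|\cdot\|_\infty$ does \emph{not} witness failure of oscillation stability on $\ell_p$, $1\leqslant p<\infty$: inside any block subspace one can pass to a further block subspace spanned by long normalized averages of disjointly supported vectors, and on its sphere $\|\cdot\|_\infty$ is uniformly small, so this function stabilizes hereditarily --- the failure of all such simple candidates is precisely why the distortion problem was hard. The actual division of labour is the opposite of what you write: Milman's theorem says that a space with no distortable subspace contains an almost isometric copy of $c_0$ or of some $\ell_p$; hence a block subspace omitting $c_0$ and all the $\ell_p$'s already has a distortable subspace by Milman alone, and a $\lambda$-distorted equivalent norm yields two positively separated asymptotic sets, i.e.\ pigeonhole failure --- this is the comparatively soft case. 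The full strength of Odell--Schlumprecht is needed exactly for the $\ell_p$ cases: arbitrary distortability of $\ell_p$ for $1<p<\infty$, together with their separate construction of separated asymptotic sets in $\ell_1$, which by James is not distortable, so no equivalent-norm witness can exist there and a genuinely Lipschitz-level one must be built. Since you import the non-stabilization statement as a black box anyway (as does the paper), the skeleton of your argument survives, but the justification you sketch for that black box is wrong as stated.
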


Thus, in $c_0$-saturated spaces, we have a strengthening of Gowers' theorem:

\begin{thm}[Gowers' Ramsey-type theorem for $c_0$]\label{Gowersc0theorem}

Suppose that $E$ is $c_0$-saturated. Let $\X \subseteq [E]$ be an analytic set, $X\subseteq E$ be a block subspace, and $\Delta$ be an infinite sequence of positive real numbers. Then there exists a block subspace $Y$ of $X$ such that either $[Y] \subseteq \X^c$, or $[Y] \subseteq (\X)_\Delta$.

\end{thm}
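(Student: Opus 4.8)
The plan is to deduce Theorem \ref{Gowersc0theorem} from Gowers' theorem (Theorem \ref{GowersThm}) together with the approximate pigeonhole principle (available by Theorem \ref{pigeonc0}, since $E$ is $c_0$-saturated). The point is that in a $c_0$-saturated space, player \II{}'s strategy in $G_Y$ to reach $(\X)_\Delta$ can be ``upgraded'' to the much stronger conclusion $[Z] \subseteq (\X)_{\Delta'}$ for a suitable block subspace $Z$ and a slightly enlarged tolerance sequence $\Delta'$, by using the pigeonhole principle to absorb the choices \I{} would make against that strategy.

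First I would fix the data: an analytic set $\X \subseteq [E]$, a block subspace $X$, and $\Delta = (\Delta_n)$. Apply Theorem \ref{GowersThm} with tolerance $\Delta/2 = (\Delta_n/2)$ (or any summably-smaller sequence) to get a block subspace $Y \subseteq X$ with either $[Y] \subseteq \X^c$ — in which case we are done immediately — or player \II{} has a strategy $\tau$ in $G_Y$ to reach $(\X)_{\Delta/2}$. Assume the latter. Now I would run an iterative construction of a block sequence $(z_n)_{n \in \omega}$ inside $Y$, maintaining at each finite stage a current block subspace $Y_n \subseteq Y$, such that $(z_0, \ldots, z_{n-1})$ is a legal partial play for \II{} according to $\tau$ against some sequence of moves $Y_0, \ldots, Y_{n-1}$ by \I{}, AND such that every normalized finite-support vector $z$ with $\supp(z_{n-1}) < \supp(z) \subseteq Y_n$ could serve (up to a $\Delta_n/2^{n+1}$-perturbation, say) as the next legal move for \II{}. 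The key step making this possible is the approximate pigeonhole principle: at stage $n$, the set of vectors of $S_{Y}$ (with support past $z_{n-1}$) that $\tau$ would accept as a legal $n$-th move, when \I{} plays the subspace $Y_n$, is some subset $A_n$ of a sphere; applying the approximate pigeonhole principle inside $Y_n$ with $\delta = \Delta_n/2^{n+1}$ yields a block subspace $Y_{n+1} \subseteq Y_n$ with $S_{Y_{n+1}} \subseteq (A_n)_{\delta}$ (the alternative $S_{Y_{n+1}} \subseteq A_n^c$ must be excluded, which requires knowing that $\tau$ accepts \emph{some} block vector in every block subspace — this follows from the fact that $\tau$ is a strategy, hence defined on all legal positions). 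Then set $Z = \overline{\spa}(z_n : n \in \omega)$.

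To finish, I would take an arbitrary block sequence $(w_n)_{n \in \omega} \in [Z]$ and show $(w_n) \in (\X)_{\Delta'}$ where $\Delta'_n = \Delta_n/2 + \Delta_n/2^{n+1} + \cdots$; by construction each $w_n$ is a block vector in $Y_n$, hence within $\Delta_n/2^{n+1}$ of a vector $y_n$ that is a legal $\tau$-response to \I{}'s moves $Y_0, \ldots, Y_{n-1}$, so $(y_n)$ is an outcome of a run of $G_Y$ consistent with $\tau$, hence lies in $(\X)_{\Delta/2}$, and therefore $(w_n)$ lies in $(\X)_{\Delta/2 + (\Delta_n/2^{n+1})_n} \subseteq (\X)_\Delta$. (One should choose the perturbations from the start so that these two sources of error sum to at most $\Delta_n$; rescaling $\Delta$ at the outset makes this bookkeeping trivial.) I expect the main obstacle to be the careful synchronization in the inductive step: ensuring that the block subspace handed to the pigeonhole principle is exactly the one that \I{} is deemed to play against $\tau$, and that ``$\tau$ accepts some vector in every block subspace'' is legitimately invoked — i.e. that after the perturbation $w_n$ of $y_n$ is committed, the strategy $\tau$ still has legal responses at the next stage. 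This is a matter of threading the perturbations and the subspace $Y_n$ through both the game tree and the pigeonhole applications consistently; the Banach-space analysis itself is entirely routine once the combinatorial bookkeeping is set up.
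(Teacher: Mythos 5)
Your plan (Theorem \ref{GowersThm} plus the approximate pigeonhole principle of Theorem \ref{pigeonc0}, used to upgrade player \II{}'s strategy to a genuine subspace statement) is the right idea in spirit, but the upgrade step as you describe it has a real gap. First, for your exclusion argument to make sense, $A_n$ cannot be ``the vectors $\tau$ accepts when \I{} plays $Y_n$'' -- against a fixed move of \I{} the strategy returns a single vector -- it must be the set of vectors \emph{reachable} by $\tau$ at the current position, i.e.\ the responses of $\tau$ as \I{}'s move ranges over all block subspaces; with that reading the alternative $S_{Y_{n+1}} \subseteq A_n^c$ is indeed excluded. But then the reachable set depends on the entire position, that is, on which earlier \II{}-responses $y_0,\ldots,y_{n-1}$ (and which \I{}-moves producing them) occur in the simulated run of $G_Y$. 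In your verification these earlier responses are perturbations of the terms of an \emph{arbitrary} block sequence $(w_n)\in[Z]$ -- and note that each $w_n$ is an arbitrary normalized block of the $z_k$'s, not one of the $z_k$'s -- whereas your single chain $Y_0 \supseteq Y_1 \supseteq \cdots$ was built by applying the pigeonhole once per stage to the reachable set of the one fixed partial play $(z_0,\ldots,z_{n-1})$. So the vector $y_n$ near $w_n$ is only known to be a legal $\tau$-response at the wrong position, and the assertion that $(y_n)_{n\in\omega}$ is the outcome of a single $\tau$-consistent run of $G_Y$ does not follow; the construction controls one play, not all of $[Z]$ simultaneously.

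Repairing this requires exactly the two mechanisms the paper uses. One must apply the pigeonhole principle countably many times, once for each finite sequence from a countable dense set (each ``state'' realizing such a sequence up to a prescribed error), and diagonalize over the resulting subspaces; this is the proof of Proposition \ref{ApproxConsPigeonhole}, and it only yields a strategy for \I{} in the asymptotic game $F_q$ to reach $(\X)_\Delta$, which is still adaptive. One must then eliminate the asymptotic game using compactness: at stage $n$, \I{}'s subspace is chosen below $\tau(s)$ for \emph{all} of the finitely many net-sequences $s$ that can be formed from the finite-dimensional pieces already produced (Theorem \ref{AbstractApproxExact}, applied with the system $\K_E$ of spheres of finite-dimensional subspaces), which is what lets a fixed $Y$ handle every block sequence at once; this is how the paper deduces the theorem from Corollary \ref{AbstractGowers}. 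Without the enumeration-plus-diagonalization and the finite-net bookkeeping, the ``threading'' you flag at the end is not merely delicate -- it is where the argument breaks.
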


For a complete survey of Gowers' Ramsey-type theory in Banach spaces, see \cite{todorcevicbleu}, Part B, Chapter IV.

\smallskip

In 2010, in \cite{RosendalGowers}, Rosendal proved an exact version (without approximation) of Gowers' theorem, in countable vector spaces, which easily implies Gowers' theorem in Banach spaces. In this theorem, in order to remove the approximation, the non-game-theoretical conclusion has to be weakened by introducing a new game, the \emph{asymptotic game}. We present here Rosendal's theorem in more details. Let $E$ be a countably infinite-dimensional vector space over an at most countable field $K$ and $(e_i)_{i \in \omega}$ be a basis (in the algebraic sense) of $E$. The notions of \emph{support} of a vector and of a \emph{block sequence} relative to this basis are defined exactly in the same way as in a Banach space with a Schauder basis (this time, since there is no norm, there is obviously no notion of normalized bases or block sequences). A \emph{block subspace} is a subspace of $E$ spanned by a block sequence. Observe that in this setting, since every vector has finite support, every infinite-dimensional subspace of $E$ has a further subspace that is a block subspace. Given a block subspace $X \subseteq E$, define the two following games:

\begin{defin}

\alaligne

\begin{enumerate}

\item \emph{Gowers' game} below $X$, denoted by $G_X$, is defined in the following way:

\smallskip

\begin{tabular}{ccccccc}
\textbf{I} & $Y_0$ & & $Y_1$ & & $\hdots$ & \\
\textbf{II} & & $y_0$ & & $y_1$ & & $\hdots$ 
\end{tabular}

\smallskip

\noindent where the $Y_i$'s are block subspaces of $X$, and the $y_i$'s are nonzero elements of $E$, with the constraint for \II{} that for all $i \in \omega$, $y_i \in Y_i$. The outcome of the game is the sequence $(y_i)_{i \in \omega} \in E^\omega$.

\item The \emph{asymptotic game} below $X$, denoted by $F_X$, is defined in the same way as $G_X$, except that this time, the $Y_i$'s are moreover required to have finite codimension in $X$.

\end{enumerate}

\end{defin}

We endow $E$ with the discrete topology and $E^\omega$ with the product topology; since $E$ is countabe, $E^\omega$ is a Polish space. Rosendal's theorem is the following:

\begin{thm}[Rosendal]\label{RosendalGowersthm}

Let $\X$ be an analytic subset of $E^\omega$. Then for every block subspace $X \subseteq E$, there exists a block subspace $Y \subseteq X$ such that either \I{} has a strategy in $F_Y$ to reach $\X^c$, or \II{} has a strategy in $G_Y$ to reach $\X$.

\end{thm}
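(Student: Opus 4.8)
The plan is to prove the dichotomy first for $\X$ \emph{open}, then to propagate it up the Borel hierarchy, and finally to reach analytic sets by an unfolding argument; what the classical Ramsey proofs extract from a pigeonhole principle will here be produced instead by a combinatorial forcing on block subspaces, with the Gale--Stewart theorem only entering to get things started. It is worth recording at once why the statement has content even for clopen $\X$: Gale--Stewart applied to the closed game $G_X$ with payoff $\X$ gives that either \II{} has a strategy in $G_X$ to reach $\X$ (this is already the second alternative, with $Y=X$), or \I{} has a strategy in $G_X$ to reach $\X^c$; but that strategy uses moves that are \emph{illegal} in the asymptotic game $F_X$, and the whole point is that, after shrinking $X$ to a suitable $Y$, \I{} can get by with the far poorer game $F_Y$.

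For the open case I would run a combinatorial forcing. Since $E$ is countable, the set of finite sequences $\bar s$ of nonzero vectors of $E$ is countable. For such an $\bar s$ and a block subspace $X$ put $\X_{\bar s}=\{\bar t\in E^\omega\mid \bar s{}^\frown{}\bar t\in\X\}$ (which is again open), and say that $\bar s$ \emph{accepts} $X$ if \II{} has a strategy in $G_X$ played below $\bar s$ to reach $\X_{\bar s}$, and that $\bar s$ \emph{rejects} $X$ if $\bar s$ accepts no block subspace of $X$; both notions are monotone downwards in $X$. Three ingredients are then needed: (i) the trivial deciding lemma --- for every $\bar s$ and $X$ there is $Y\subseteq X$ that accepts $\bar s$ or rejects $\bar s$; (ii) a fusion --- building a decreasing sequence $X=X_0\supseteq X_1\supseteq\cdots$ in which $X_{n+1}$ decides the $n$-th finite sequence drawn from the basis vectors chosen so far, and passing to a suitable diagonal block subspace $Y\subseteq X$, one gets a $Y$ deciding every finite sequence of its own nonzero vectors, coherently along the tails of $Y$; (iii) the heart of the argument, a \emph{rejection-propagation} lemma: if $\bar s$ rejects $X$ then there is a block subspace $Y\subseteq X$ --- of a form that the bookkeeping will allow \I{} to play in the asymptotic game --- such that $\bar s{}^\frown{}(y)$ rejects $Y$ for every nonzero $y\in Y$. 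Granting these, the dichotomy for open $\X$ follows: take a fully deciding $Y\subseteq X$; if $\emptyset$ accepts $Y$ we are done by definition of "accepts"; if $\emptyset$ rejects $Y$, then \I{} plays $F_Y$ keeping as an invariant that the finite position built so far still rejects the current tail of $Y$, using (iii) to choose legal moves preserving the invariant, and --- here one uses essentially that $\X$ is \emph{open}, so that a play all of whose finite positions are rejected cannot have its outcome in $\X$ (an outcome in the open set $\X$ would have an initial segment $\bar s$ with $\X_{\bar s}=E^\omega$, hence accepted by every subspace) --- one concludes that the outcome lies in $\X^c$.

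The main obstacle is the rejection-propagation lemma (iii), and it is precisely there that the absence of a pigeonhole principle, and hence the need for the asymmetric pair of games, is felt. In a pigeonhole setting one would $2$-colour the one-step extensions $\bar s{}^\frown{}(y)$ according to whether they are accepted in some subspace and pass to a monochromatic subspace; here no such colouring can be made monochromatic, so one argues by contradiction: if (iii) failed, then against every admissible move of \I{} one could exhibit a one-step extension accepted in a further subspace, and by weaving the corresponding strategies of \II{} together one would manufacture a strategy witnessing that $\bar s$ accepts $X$ --- a contradiction. Keeping this weaving legal is exactly what forces \I{}'s moves to be restricted to subspaces of finite codimension, which is why the game handed to \I{} in the conclusion is the asymptotic game $F_Y$ rather than $G_Y$; and since finite-codimensional subspaces need not be block subspaces, some care is needed (passing to finite-codimensional block subspaces inside them, and arranging the fusion so that the diagonalization still goes through). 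Once open sets are handled, the Borel case is the usual transfinite induction: the class of sets for which the dichotomy holds below every block subspace is closed under complementation and under countable increasing unions (the latter by a fusion), and the analytic case follows by unfolding --- writing $\X$ as the projection of a closed set $C\subseteq E^\omega\times\omega^\omega$ and running the previous argument for the auxiliary game in which \II{} additionally builds, coordinate by coordinate, an element of $\omega^\omega$ aiming for $C$; a strategy for \II{} there projects to a strategy in $G_Y$ reaching $\X$, while a strategy for \I{} there, in which \I{} ignores the extra coordinates, is a strategy in $F_Y$ reaching $\X^c$.
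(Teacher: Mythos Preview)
Your route is quite different from the paper's. The paper never runs a combinatorial forcing on block subspaces at all: it deduces Rosendal's theorem as an instance of the abstract Rosendal theorem (Theorem~\ref{abstractRosendal}), whose Borel case is obtained by a reduction to the \emph{adversarial} Ramsey principle (Theorem~\ref{main}) in an auxiliary Gowers space, and that principle in turn is proved from Borel determinacy of a Kastanas-type game on reals. The passage from Borel to analytic is by unfolding, essentially as you propose and as in Fact~\ref{Fact34}. Your open-case argument via accepts/rejects and fusion is, by contrast, in the spirit of Rosendal's own original proof; but Rosendal does not climb the Borel hierarchy either --- he runs the forcing directly at the analytic level, coding the analytic set by a tree.

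The genuine gap in your plan is the middle step: the class of strategically Ramsey sets is \emph{not} obviously closed under complementation, and this is exactly where the asymmetry between $F_Y$ and $G_Y$ bites. Suppose $\X$ is strategically Ramsey and you try to show $\X^c$ is. Given $p$, you find $q\leqslant p$ with either (a) \I{} wins $F_q$ for $\X^c$, or (b) \II{} wins $G_q$ for $\X$. In case~(a), Lemma~\ref{ImplGames} converts \I{}'s $F_q$-strategy into a \II{}-strategy in $G_q$ for $\X^c$, fine. But in case~(b) you would need to turn \II{}'s $G_q$-strategy for $\X$ into a strategy for \I{} in some $F_{q'}$ for $\X$ --- and that is precisely Proposition~\ref{consPigeonhole}, which \emph{requires the pigeonhole principle}, a principle that fails in the Rosendal space over any field with more than two elements. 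So your transfinite induction stalls at the very first step: from the open case you cannot get the closed case. This matters even for your final unfolding: writing $\X$ as the projection of a closed $C$, what is needed in the auxiliary space is the dichotomy for $C$ (closed), namely ``\I{} wins $F'_q$ for $C^c$ or \II{} wins $G'_q$ for $C$''; your open-case lemma applied to $C^c$ yields instead ``\I{} wins $F'_q$ for $C$ or \II{} wins $G'_q$ for $C^c$'', which is the wrong pair of alternatives. To repair the argument you would either have to run a separate (dualized) combinatorial forcing giving the closed case directly, or bypass the hierarchy altogether as Rosendal does; the paper's own detour through the symmetric adversarial principle is another way around the obstacle.
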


Say that a set $\X \subseteq E^\omega$ is \emph{strategically Ramsey} if it satisfies the conclusion of this theorem. Here, the use of an asymptotic game in one side of the alternative is (as we will see in the rest of this paper) not much weaker than a non-game-theoretical conclusion as in Mathias--Silver's theorem.

\smallskip

In the same paper as the last theorem, Rosendal, inspired by the work of Pelczar \cite{Pelczar}, and by a common work with Ferenczi \cite{fero}, introduced a new Ramsey principle
% in \cite{RosendalGowers} and then \cite{Rosendaladverse}
which is, unlike Theorem \ref{RosendalGowersthm}, symmetrical. His result was then refined in \cite{Rosendaladverse}. It involves two games, known as the \emph{adversarial Gowers' games}, obtained by mixing the games $G_X$ and $F_X$.

\begin{defin}

\alaligne

\begin{enumerate}

\item For a block subspace $X \subseteq E$, the game $A_X$ is defined in the following way:

\smallskip

\begin{tabular}{cccccccc}
\textbf{I} & & $x_0, \, Y_0$ & & $x_1, \, Y_1$ & & $\hdots$ \\
\textbf{II} & $X_0$ & & $y_0, \, X_1$ & & $y_1, \, X_2 $ & & $\hdots$ 
\end{tabular}

\smallskip

\noindent where the $x_i$'s and the $y_i$'s are nonzero vectors of $X$, the $X_i$'s are block subspaces of $X$, and the $Y_i$'s are block subspaces of $X$ with finite codimension. The rules are the following:

\begin{itemize}

\item for \textbf{I}: for all $i \in \omega$, $x_i \in X_i$;

\item for \textbf{II}: for all $i \in \omega$, $y_i \in Y_i$;

\end{itemize}

\noindent and the outcome of the game is the sequence $(x_0, y_0, x_1, y_1, \ldots) \in E^\omega$.

\item The game $B_X$ is defined in the same way as $A_X$, except that this time the $X_i$'s are required to have finite codimension in $X$, whereas the $Y_i$'s can be arbitrary block subspaces of $X$.

\end{enumerate}

\end{defin}

The result Rosendal proves in \cite{Rosendaladverse} is the following:

\begin{thm}[Rosendal]\label{Adversarialconcrete}

Let $\X \subseteq E^\omega$ be $\boldsymbol{\Sigma}_3^0$ or $\boldsymbol{\Pi}_3^0$. Then for every block subspace $X \subseteq E$, there exists a block subspace $Y \subseteq X$ such that either \I{} has a strategy in $A_Y$ to reach $\X$, or \II{} has a strategy in $B_Y$ to reach $\X^c$.

\end{thm}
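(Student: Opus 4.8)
The plan is to argue by induction on the level of $\X$ in the Borel hierarchy, with the base case being open sets and the inductive step fuelled by Rosendal's dichotomy (Theorem~\ref{RosendalGowersthm}) through a fusion argument over block subspaces. Note first that $A_X$ and $B_X$ are exchanged by swapping the two players: each of them produces both vectors and block subspaces, and the finite-codimension constraint simply moves from one player to the other, so a short verification shows that ``$\X$ satisfies the conclusion'' is equivalent to ``$\X^c$ satisfies the conclusion''. Hence it suffices to prove, by induction on $n \leqslant 3$, that every $\boldsymbol{\Sigma}^0_n$ set satisfies the conclusion, using freely at stage $n$ that every $\boldsymbol{\Pi}^0_{n-1}$ set does too (which, by the duality just mentioned, is exactly the inductive hypothesis).

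\textbf{Base case.} Let $\X$ be open and let $X$ be a block subspace. If \I{} has a strategy in $A_X$ to reach $\X$ we are done; otherwise, and more generally if \I{} has no such strategy in $A_Y$ for any block subspace $Y \subseteq X$, I want to produce $Y \subseteq X$ together with a strategy for \II{} in $B_Y$ keeping the outcome in $\X^c$. Since $\X$ is open, the positions of the $A$-game from which \I{} can force the outcome into $\X$ are generated by a transfinite ``attractor'' construction, and the assumption says that the empty position of each $A_Y$ lies outside it. I would then build $Y$ as the limit of a fusion sequence $X = Z_0 \supseteq Z_1 \supseteq \cdots$, together with the desired \II{}-strategy, by a combinatorial-forcing argument: at each stage a finite part of $Y$ is frozen, and the failure of \I{}-strategies at the finitely many relevant finite positions lets \II{} choose her next block subspace and vector so as to steer the outcome away from each basic-open piece of $\X$ encountered so far, the finite-codimension moves that \I{} is entitled to in $B_Y$ being too weak to obstruct this. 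Theorem~\ref{RosendalGowersthm}, or rather the pigeonhole-free fusion technique underlying it, is what makes the diagonalization of \I{}'s block-subspace moves go through.

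\textbf{Inductive step.} Assume every $\boldsymbol{\Pi}^0_n$ set satisfies the conclusion, and let $\X$ be $\boldsymbol{\Sigma}^0_{n+1}$, say $\X = \bigcup_k \X_k$ with each $\X_k \in \boldsymbol{\Pi}^0_n$; fix a block subspace $X$. Build a fusion $X = Z_0 \supseteq Z_1 \supseteq \cdots$: given $Z_k$, apply the inductive hypothesis to $\X_k$ inside $Z_k$ to obtain $Z_{k+1} \subseteq Z_k$ on which either \I{} has a strategy in $A_{Z_{k+1}}$ to reach $\X_k$, or \II{} has a strategy in $B_{Z_{k+1}}$ to reach $\X_k^c$. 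If the first alternative occurs for some $k$, then since $\X_k \subseteq \X$ that strategy of \I{} already reaches $\X$ and we are done. Otherwise the second alternative occurs for every $k$, and on the limit block subspace $Z = \bigcap_k Z_k$ one amalgamates the strategies of \II{} — the $k$-th one living in $B_{Z_{k+1}}$ and reaching $\X_k^c$ — into a single strategy of \II{} in $B_Z$ reaching $\bigcap_k \X_k^c = \X^c$. Starting from open sets and iterating this twice yields the conclusion for $\boldsymbol{\Sigma}^0_3$, and by duality also for $\boldsymbol{\Pi}^0_3$.

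\textbf{Main obstacle.} The heart of the matter is the amalgamation in the inductive step: from a fusion sequence $(Z_k)$ and, on each $Z_{k+1}$, a strategy of \II{} in $B_{Z_{k+1}}$ reaching $\X_k^c$, one must glue these into a \emph{single} strategy of \II{} in $B_Z$ reaching $\bigcap_k \X_k^c$. This is far more delicate than the analogous fusion of strategies in the pigeonhole setting of Mathias--Silver, because in $B_Z$ both players alternately emit vectors and block subspaces, so the bookkeeping has to diagonalize the opponent's subspace moves, respect the finite-codimension constraints, and keep the outcome inside the shrinking family $(\X_k^c)$, all at the same time. The number of quantifier alternations this generates is precisely what confines the argument to the third Borel level; breaking through that barrier is the purpose of the abstract framework of approximate and strategic Ramsey sets developed in the rest of the paper.
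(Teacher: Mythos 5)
Your route (an induction on the Borel rank with a fusion of strategies, essentially an attempt to reconstruct Rosendal's original argument) is entirely different from the way this statement is obtained here, where it is subsumed by Theorem~\ref{main} applied to the Rosendal space, proved by transferring Borel determinacy through Kastanas' game (Proposition~\ref{mainaux}). That difference would be fine, but as written your proposal has two genuine gaps. The first is the duality you invoke at every stage of the induction. Swapping the two players does \emph{not} exchange $A_Y$ and $B_Y$: in both games \II{} makes the opening move and the vectors of \I{} occupy the even coordinates of the outcome, so after the swap the protagonist of $B_Y$ plays the \emph{odd} coordinates and \emph{emits} the initial subspace, whereas \I{} in $A_Y$ plays the even coordinates and \emph{receives} an initial (arbitrary) subspace. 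To repair the mismatch one has to shift the outcome by one coordinate, which replaces $\X^c$ by a shifted set and changes the nature (arbitrary versus finite-codimensional) of the opening constraint. So ``the conclusion for $\X$ is equivalent to the conclusion for $\X^c$'' is not a short verification, and without it your inductive hypothesis does not cover the $\boldsymbol{\Pi}^0_n$ pieces you decompose $\boldsymbol{\Sigma}^0_{n+1}$ sets into.

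The second and more serious gap is the amalgamation step, which you yourself identify as the heart of the matter but for which you give no mechanism. Having, for each $k$, a strategy $\sigma_k$ for \II{} in $B_{Z_{k+1}}$ reaching $\X_k^c$ does not yield a single strategy in $B_Z$ reaching $\bigcap_k \X_k^c$: the $\sigma_k$ prescribe incompatible answers at the same finite positions, a single run cannot follow infinitely many of them, and nothing in the fusion $(Z_k)$ forces a run played according to $\sigma_0$, say, to avoid $\X_1$. Moreover, if such a gluing worked as stated it would not be ``confined to the third level'': the very same induction would climb the whole Borel hierarchy and prove Theorem~\ref{main} by an elementary fusion argument, which is impossible, since (as observed after Theorem~\ref{main}) the Borel adversarial Ramsey principle already implies Borel determinacy for games on integers and hence, by Friedman's work, cannot be proved without the power set axiom and the replacement scheme. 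The level-$3$ restriction in Rosendal's theorem exists precisely because no naive amalgamation is available; his proof replaces it by a much finer analysis, and the full Borel case is reached in this paper only via the determinacy transfer. Two smaller points: $\bigcap_k Z_k$ may be finite-dimensional, so the diagonal subspace must be taken with $Z \subseteq^* Z_k$ and the hereditariness of the relevant strategic properties under $\subseteq^*$ has to be checked; and the base case for open sets is only described, not proved — the ``attractor'' argument needs the same kind of subspace diagonalization you defer to Theorem~\ref{RosendalGowersthm}, and spelling it out is nontrivial.
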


Say that a set $\X \subseteq E^\omega$ is \emph{adversarially Ramsey} if it satisfies the conclusion of this theorem. Then, a natural question to ask is for which complexity of the set $\X$ one can ensure that it is adversarially Ramsey.

\smallskip

There are two things to observe. First, let $\X \subseteq E^\omega$ and define $\X' = \{(x_i)_{i \in \omega} \in E^\omega \mid (x_{2i})_{i \in \omega} \in \X\}$. Then by forgetting the contribution of player \II{} to the outcome of the adversarial Gowers' games and switching the roles of players \I{} and \II{}, we see that $\X$ is strategically Ramsey if and only if $\X'$ is adversarially Ramsey. So, for a suitable class $\Gamma$ of subsets of Polish spaces, saying that all $\Gamma$-subsets of $E^\omega$ are adversarially Ramsey is stronger than saying that all $\Gamma$-subsets of $E^\omega$ are strategically Ramsey. The second remark is that, if the field $K$ is infinite, then the adversarial Ramsey property for $\Gamma$-subsets of $E^\omega$ also implies that all $\Gamma$-subsets of $\omega^\omega$ are determined. To see this, observe that when playing vectors in $A_X$ or $B_X$, no matter the constraint imposed by the other player, players \I{} and \II{} have total liberty for choosing the first non-zero coordinate of the vectors they play. Therefore, by making $\X$ only depend on the first nonzero coordinate of each vector played, we recover a classical Gale-Stewart game in $(K^*)^\omega$. For this reason, there is no hope, in $ZFC$, to prove the adversarial Ramsey property for a class larger than Borel sets. Then, Rosendal asks the following questions in \cite{Rosendaladverse}:

\begin{question}[Rosendal]\label{mainvector}

Is every Borel set adversarially Ramsey?

\end{question}

\begin{question}[Rosendal]\label{mainvectoranalytic}

In the presence of large cardinals, is every analytic set adversarially Ramsey?

\end{question}

This paper is organised as follows. In section \ref{par2}, we define an abstract setting for infinite-dimensional Ramsey theory, called the setting of \emph{Gowers spaces}, where the asymptotic game, Gowers' game, and the adversarial Gowers' games can be defined. In this general setting, we prove that the answer to Question \ref{mainvector} is yes, and that the answer to Question \ref{mainvectoranalytic} is yes in the presence of a measurable cardinal. In section \ref{par3}, we prove a general version of Rosendal's Theorem \ref{RosendalGowersthm} in Gowers spaces; then, we introduce the pigeonhole principle in these spaces and we show that if this principle holds, then we can refine the abstract Rosendal's theorem to a result with an asymptotic game in both sides, which Mathias--Silver's theorem is an easy consequence. In section \ref{par4}, we give approximate versions of the results of sections \ref{par2} and \ref{par3} enabling to work in Polish metric spaces instead of countable spaces. Finally, in section \ref{par5}, we develop a general framework to deduce a genuine, non game-theoretical Ramsey conclusion (of the form ``in some subspace, every sequence satisfying some structural property is in $\X$'') from a conclusion involving a strategy of player \I{} in the asymptotic game. The results of this section, combined with these of section \ref{par4}, will for instance have Gowers' Theorems \ref{GowersThm} and \ref{Gowersc0theorem} as an immediate consequence.

\bigskip\bigskip

\section{Gowers spaces and the aversarial Ramsey\\property}\label{par2}

In this section, we will introduce the notion of a \emph{Gowers space}, which will be our abstract setting for infinite-dimensional Ramsey theory; then, we will prove in this setting the adversarial Ramsey principle, our most general Ramsey result without pigeonhole principle, which will give positive answers to Questions \ref{mainvector} and \ref{mainvectoranalytic}.

\smallskip

Inspired by the examples given in the introduction, we define a formalism with two notions, a notion of \emph{subspaces} and a notion of \emph{points}. The idea is that we will color infinite sequences of points and try to find subspaces such that lots of sequences of points in this subspace share the same color.

\smallskip

For $X$ a set, denote by $\Seq(X) = X^{< \omega} \setminus \{\varnothing\}$ the set of all nonempty finite sequences of elements of $X$. For $s, t \in X^{< \omega}$, denote by $|s|$ the length of $s$ (i.e. the integer $n$ such that $s \in X^n$), by $s \subseteq t$ the fact that $s$ is an initial segment of $t$, and by $s \concat t$ the concatenation of $s$ and $t$. If $x \in X$, the concatenation of $s$ and of the sequence $(x)$ whose unique term is $x$ will be abusively denoted by $s \concat x$.

\begin{defin}\label{DefGowersSpaces}

A \emph{Gowers space} is a quintuple $\G = (P, X, \leqslant, \leqslant^*, \vartriangleleft\nolinebreak)$, where $P$ is a nonempty set (the set of \emph{subspaces}), $X$ is an at most countable nonempty set (the set of \emph{points}), $\leqslant$ and $\leqslant^*$ are two quasiorders on $P$ (i.e. reflexive and transitive binary relations), and $\vartriangleleft \; \subseteq \Seq(X) \times P$ is a binary relation, satisfying the following properties:

\begin{enumerate}

\item for every $p, q \in P$, if $p \leqslant q$, then $p \leqslant^* q$;

\item for every $p, q \in P$, if $p \leqslant^* q$, then there exists $r \in P$ such that $r \leqslant p$, $r\leqslant q$ and $p \leqslant^* r$;

\item for every $\leqslant$-decreasing sequence $(p_i)_{i \in \omega}$ of elements of $P$, there exists $p^* \in P$ such that for all $i \in \omega$, we have $p^* \leqslant^* p_i$;

\item for every $p \in P$ and $s \in X^{<\omega}$, there exists $x \in X$ such that $s \concat x \vartriangleleft p$;

\item for every $s \in \Seq(X)$ and every $p, q \in P$, if $s \vartriangleleft p$ and $p\leqslant q$, then $s \vartriangleleft q$.

\end{enumerate}

Say that $p, q \in P$ are \emph{compatible} if there exists $r \in P$ such that $r \leqslant p$ and $r \leqslant q$. To save writing, we will often write $p \lessapprox q$ when $p \leqslant q$ and $q \leqslant^* p$. Observe that by 2., the $p^*$ in 3. can be chosen in such a way that $p^* \leqslant p_0$.

\end{defin}

In most usual cases, the fact that $s \vartriangleleft p$ will only depend on $p$ and on the last term of $s$; the spaces satisfying this property will be called \emph{forgetful Gowers spaces}. In these spaces, we will allow ourselves to view $\vartriangleleft$ as a binary relation on $X \times P$. However, for some applications (see, for example, the proof of Theorem \ref{ThmApproxRamsey}), it is sometimes useful to make the fact that $s \vartriangleleft p$ also depend on the the length of the sequence $s$, and we could imagine that even all the terms of $s$ would be relevant in some contexts.

\smallskip

When thinking about a Gowers space, we should have the two following examples in mind:

\begin{itemize}

\item The \emph{Mathias--Silver space} $\Sil = (\elinf, \omega, \subseteq, \subseteq^*, \vartriangleleft)$, where $\elinf$ is the set of all infinite sets of integers, $M \subseteq^* N$ iff $M\setminus N$ is finite and $(x_0, \ldots, x_n) \vartriangleleft M$ iff $x_n \in M$. Here, we have that $M \lessapprox N$ iff $M$ is a cofinite subset of $N$, and $M$ and $N$ are compatible iff $M \cap N$ in infinite.

\item The \emph{Rosendal space} over an at most countable field $K$, $\Ros_K = (P, E\setminus\{0\}, \subseteq, \subseteq^*\nolinebreak, \vartriangleleft\nolinebreak)$, where $E$ is a countably infinite-dimensional $K$-vector space with a basis $(e_i)_{i \in \omega}$, $P$ is the set of all block subspaces of $E$ relative to this basis, $X \subseteq^* Y$ iff $Y$ contains some finite-codimensional block subspace of $X$, and $(x_0, \ldots, x_n) \vartriangleleft X$ iff $x_n \in X$. Here, we have that $X \lessapprox Y$ iff $X$ is a finite-codimensional subspace of $Y$, and $X$ and $Y$ are compatible iff $X \cap Y$ is infinite-dimensional. 

\end{itemize}

Observe that both of these spaces are forgetful, so we could have defined $\vartriangleleft$ as a relation between points and subspaces (and that is what we will do, in such cases, in the rest of this paper); in this way, in both cases, $\vartriangleleft$ is the membership relation. It is easy to verify that, for these examples, the axioms 1., 2., 4., and 5. are satisfied; we briefly explain how to prove 3.. For the Mathias--Silver space, if $(M_i)_{i \in \omega}$ is a $\subseteq$-decreasing sequence of infinite subsets of $\omega$, then we can, for each $i \in \omega$, choose $n_i \in M_i$ in such a way that the sequence $(n_i)_{i \in \omega}$ is increasing, and let $M^* = \{n_i \mid i \in \omega\}$. Then the set $M^*$ is as wanted. For the Rosendal space, the idea is the same: given $(F_i)_{i \in \omega}$ a decreasing sequence of block subspaces of $E$, we can pick, for each $i$, a nonzero vector $x_i \in F_i$, in such a way that for $i \geqslant 1$, we have $\supp(x_{i - 1}) < \supp(x_i)$. In this way, $(x_i)_{i \in \omega}$ is a block sequence, and the block subspace $F^*$ spanned by this sequence is as wanted.
%For the Rosendal space, the idea is the same: given $(F_i)_{i \in \omega}$ a decreasing infinite-dimensional subspaces of $E$, we pick $x_i \in F_i$ for each $i$ in such a way that $x_i$ is linearly independant of $x_0, \ldots x_{i - 1}$, and the vector space $F^*$ spanned by all the $x_i$'s is as wanted.

\smallskip

Also observe that in the definition of the Rosendal space, choosing $E\setminus\{0\}$ and not $E$ for the set of points is totally arbitrary, and here, we only made this choice in order to use the same convention as Rosendal in his papers \cite{RosendalGowers, Rosendaladverse}; but the results we will show apply as well when the set of points is $E$. Also, we could have taken for $P$ the set of all infinite-dimensional subspaces of $E$ (where, here, the relation $\subseteq^*$ is defined by $X \subseteq^* Y$ iff $X \cap Y$ has finite codimension in $X$) instead of only block subspaces. However, the abstract results we will prove are slightly stronger in the case when we consider only block subspaces; this is due to the fact that, while every infinite-dimensional subspace of $E$ contains a block subspace, there are finite-codimensional subspaces that do not contain any finite-codimensional block subspace.

\smallskip

In the rest of this section, we fix a Gowers space $\G = (P, X, \leqslant, \leqslant^*, \vartriangleleft)$. For $p \in P$, define the \emph{adversarial Gowers' games below $p$} as follows:

\begin{defin}\label{DefAdversarialGames}

\nopagebreak

\alaligne

\nopagebreak

\begin{enumerate}

\item The game $A_p$ is defined in the following way:

\smallskip

\begin{tabular}{cccccccc}
\textbf{I} & & $x_0,  q_0$ & & $x_1,  q_1$ & & $\hdots$ \\
\textbf{II} & $p_0$ & & $y_0,  p_1$ & & $y_1,  p_2 $ & & $\hdots$ 
\end{tabular}

\smallskip

\noindent where the $x_i$'s and the $y_i$'s are elements of $X$, and the $p_i$'s and the $q_i$'s are elements of $P$. The rules are the following:

\begin{itemize}

\item for \textbf{I}: for all $i \in \omega$, $(x_0, y_0, \ldots, x_{i - 1}, y_{i - 1}, x_i) \vartriangleleft p_i$ and $q_i \lessapprox p$;

\item for \textbf{II}: for all $i \in \omega$, $(x_0, y_0 \ldots, x_i, y_i) \vartriangleleft q_i$ and $p_i \leqslant p$.

\end{itemize}

\noindent The outcome of the game is the sequence $(x_0, y_0, x_1, y_1, \ldots) \in X^\omega$.

\item The game $B_p$ is defined in the same way as $A_p$, except that this time the we require $p_i \lessapprox p$, whereas we only require $q_i \leqslant p$.

\end{enumerate}

\end{defin}

As in the particular case of vector spaces, we can define the \emph{adversarial Ramsey property} for subsets of $X^\omega$:

\begin{defin}

A set $\X \subseteq X^\omega$ is said to be \emph{adversarially Ramsey} if for every $p \in P$, there exists $q \leqslant p$ such that either player \I{} has a strategy to reach $\X$ in $A_q$, or player \II{} has a strategy to reach $\X^c$ in $B_q$.

\end{defin}

Informally, the adversarial Ramsey property for $\X$ means that up to taking a subspace, one of the players has a winning strategy in the game that is the most difficult for him. Observe that the property that \I{} has a strategy in $A_p$ to reach some set $\X$ (resp. the property that \II{} has a strategy in $B_p$ to reach $\X^c$) is strongly hereditary in the sense that if \I{} has a strategy to reach $\X$ in $A_p$, then he also has one in $A_{p'}$ for every $p' \leqslant^* p$ (and the same holds for \II{} in $B_p$). Indeed, we can simulate a play of $A_{p'}$ with a play of $A_p$: when, in $A_p$, player \I{}'s strategy tells him to play $x_i$ and $q_i$, then in $A_{p'}$ he can play the same $x_i$ and a $q_i'$ such that $q'_i \lessapprox p'$ and $q'_i \leqslant q_i$, in such a way that the next $y_i$ played by \II{} in $A_{p'}$ will be also playable in $A_p$ (the existence of such a $q'_i$ is guaranteed by condition 2. in the definition of a Gowers space). And when, in $A_{p'}$, player \II{} plays $y_i$ and $p'_{i + 1}$, then in $A_p$, \I{} can make her play the same $y_i$ and a $p_{i + 1}$ such that $p_{i + 1} \leqslant p$ and $p_{i + 1} \leqslant p'_{i + 1}$, in such a way that the next $x_{i + 1}$ played by \I{} in $A_p$ according to his strategy will also be playable in $A_{p'}$. In this way, the outcomes of both games are the same, and since \I{} reaches $\X$ in $A_p$, then he also does in $A_{p'}$.

\smallskip

On the other hand, it is clear that if $\I{}$ has a strategy to reach some set $\X$ in $A_p$, then he also has one in $B_p$, so \II{} cannot have a strategy to reach $\X^c$ in $B_p$. Thus, the fact that $\X$ has the adversarial Ramsey property gives a genuine dichotomy between two disjoint and strongly hereditary classes of subspaces.

\smallskip

We endow the set $X$ with the discrete topology and the set $X^\omega$ with the product topology. The main result of this section is the following:

\begin{thm}[Adversarial Ramsey principle, abstract version]\label{main}

Every Borel subset of $X^\omega$ is adversarially Ramsey.

\end{thm}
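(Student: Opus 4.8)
The plan is to reduce the adversarial Ramsey principle to the Borel determinacy theorem of Martin, applied not to the games $A_p$ and $B_p$ themselves but to an auxiliary "meta-game'' in which an additional player chooses the subspace $q \leqslant p$ witnessing the dichotomy, and then mimics whichever of the two strategies survives. First I would fix a Borel set $\X \subseteq X^\omega$ and a subspace $p \in P$, and consider the class $\mathcal{U}_\X$ of subspaces $q$ for which \textbf{I} has a strategy to reach $\X$ in $A_q$, together with the class $\mathcal{V}_\X$ of subspaces $q$ for which \textbf{II} has a strategy to reach $\X^c$ in $B_q$. As observed in the excerpt, both classes are strongly $\leqslant^*$-hereditary and disjoint; the goal is to show that below every $p$ one of them is nonempty. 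The natural strategy is a fusion/diagonalization argument: using axiom 3 of a Gowers space to take limits of $\leqslant$-decreasing sequences, one tries to build a single subspace $q^* \leqslant p$ that "diagonalizes'' against all the countably many finite approximations, in the spirit of combinatorial forcing for Mathias--Silver's theorem. The twist here, compared to the classical setting, is the absence of a pigeonhole principle, which is exactly what forces the game-theoretic conclusions; so the diagonalization must be carried out while respecting a game tree rather than a tree of finite sequences.

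The cleanest way to organize this is, I expect, via an intermediate game. I would define, for each $p$, a game $G_p$ (a "game of games'') whose plays alternately produce: a decreasing sequence of subspaces $p = r_0 \geqslant r_1 \geqslant \cdots$ chosen by one player, and, interleaved, partial plays of the games $A$ and $B$ below the $r_i$'s controlled by the opponent; the outcome is declared a win for the first player if the resulting "limit'' play lands in $\X$ (using axiom 3 to produce the limit subspace $r^*$ with $r^* \leqslant^* r_i$ for all $i$). The winning condition of $G_p$ will be Borel in the play, because $\X$ is Borel and the limit-taking operation is sufficiently definable (this uses that $P$ may be replaced, for the purposes of the game, by a suitable countable dense subset — or that only countably many subspaces are ever named in a play, so the relevant information is coded by a real). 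By Borel determinacy, $G_p$ is determined. The heart of the argument is then a pair of "strategy-transfer'' lemmas: (a) if the subspace-chooser wins $G_p$, one extracts from that winning strategy, by a fusion along the decreasing sequence $r_0 \geqslant r_1 \geqslant \cdots$ it produces, an actual subspace $q \leqslant p$ together with a strategy for \textbf{II} in $B_q$ reaching $\X^c$ (or symmetrically for \textbf{I} in $A_q$); (b) if the opponent wins $G_p$, one gets the dual conclusion. Combining (a) and (b) with determinacy of $G_p$ yields the dichotomy at $p$, hence the theorem.

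The main obstacle — and the part that will require genuine care rather than routine bookkeeping — is step (a): converting a winning strategy in the auxiliary game $G_p$ into a genuine single subspace plus a genuine strategy in $A_q$ or $B_q$. The difficulty is that a strategy in $G_p$ only commits to subspaces lazily, move by move, whereas a strategy in $A_q$ must be defined relative to one fixed $q$ chosen in advance; bridging this gap is precisely a fusion argument, and one must arrange the auxiliary game so that the decreasing sequence of subspaces it generates is "generic enough'' that its limit $q^*$ (via axiom 3) absorbs every future constraint the opponent could impose. Getting the interleaving of moves right so that the constraints "$q_i \lessapprox p$'' versus "$q_i \leqslant p$'' in $A$ and $B$ are correctly matched — recall these are exactly what distinguishes $A_p$ from $B_p$ — is where axiom 2 of a Gowers space (the approximate amalgamation property, already used in the excerpt's hereditariness argument) will be invoked repeatedly. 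A secondary technical point is ensuring the auxiliary game's payoff set is actually Borel in a standard Polish space of plays, so that Martin's theorem applies; this is routine but must be checked, since $P$ itself carries no topology and one must encode plays by reals recording only the countably many subspaces and points actually used.
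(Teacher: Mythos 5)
Your outline is, in broad strokes, the paper's own route: pass to an auxiliary game in which subspaces are played lazily alongside points, obtain its determinacy from Borel determinacy, and then transfer a winning strategy back to $A_q$ or $B_q$ for a single $q\leqslant p$ produced by fusion. But the proposal stops exactly where the mathematics starts. The transfer step that you flag as ``requiring genuine care'' is not a detail to be filled in later; it is the entire content of the theorem (in the paper it is Proposition \ref{mainaux}). Carrying it out requires a concrete mechanism that your plan does not supply: a stabilization lemma (Lemma \ref{mainlem}) saying that, given any \emph{countable} family of partial plays (``states'') of the auxiliary game and any $r$, one can find $r^*\leqslant r$ such that whenever the fixed strategy can answer a continuation compatibly with $r^*$, it can do so with a subspace $v'$ satisfying $r^*\leqslant^* v'$; an inductive construction of a decreasing sequence $(q_n)$ together with countable sets of states $\St_n$ closed under the answers so stabilized; a diagonal $q$ with $q\leqslant^* q_n$ for all $n$; and finally a round-by-round simulation (using axiom 2 twice per round, once to shrink the move of the opponent into the current subspace and once to re-enlarge the answer into a legal move of $B_q$) showing that player \II{} can translate her auxiliary-game strategy into one in $B_q$ with the same outcome. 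Your plan gives no indication of what countable family one diagonalizes against, nor why countability suffices (this is where the countability of $X$ is used), so as it stands there is no proof.

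The auxiliary game you sketch is also not right as stated, in two ways. First, you make the payoff depend on a ``limit'' subspace obtained from axiom 3; but axiom 3 is a bare existence statement with no canonical choice, and $P$ carries no structure, so this payoff is not well defined, let alone Borel. The correct design --- the paper's Kastanas game $K_p$ --- makes the payoff depend only on the sequence of points $(x_0,y_0,x_1,y_1,\ldots)\in X^\omega$, which is Borel as soon as $\X$ is; all limit-taking is postponed to the transfer lemma, outside the game. Second, handing the entire decreasing chain to a dedicated ``subspace-chooser'' while the opponent ``controls'' plays of $A$ and $B$ (plays in which, in fact, both players move) does not yield a game whose two determinacy outcomes match the two horns of the dichotomy; your own hedge ``(or symmetrically for \I{} in $A_q$)'' shows the matching is unresolved. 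In $K_p$ both players alternately refine one chain ($q_i\leqslant p_i$, $p_{i+1}\leqslant q_i$) and both contribute points, and this symmetric design is exactly what gives simultaneously ``\I{} wins $K_p$ $\Rightarrow$ \I{} wins $A_q$ for some $q\leqslant p$'' and ``\II{} wins $K_p$ $\Rightarrow$ \II{} wins $B_q$ for some $q\leqslant p$''. Your worry about coding the moves by reals is the least of the issues: one can either apply Borel determinacy for games on an arbitrary set of moves (the payoff only sees the $X$-coordinates), or code $P$ by reals as in Corollary \ref{AdvGamma}.
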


In the case of the Rosendal space, the adversarial Gowers games defined here are exactly the same as those defined in the introduction. Thus, Theorem \ref{main} applied to this space provides a positive answer to Question \ref{mainvector}.

\smallskip

Also observe that if $P = \{\mathds{1}\}$ and if we have $s \vartriangleleft \mathds{1}$ for every $s \in \Seq(X)$, then both $A_\mathds{1}$ and $B_\mathds{1}$ are the classical Gale-Stewart game in $X$, so the adversarially Ramsey subsets of $X^\omega$ are exactly the determined ones. So in this space, Theorem \ref{main} is nothing more than Borel determinacy for games on integers; hence, we get that Theorem \ref{main} has at least the metamathematical strength of Borel determinacy for games on integers. Therefore, by the work of Friedman \cite{Friedman}, any proof of Theorem \ref{main} should make use of the powerset axiom and of the replacement scheme. We also get that it is not provable in $ZFC$ that every analytic (or coanalytic) set in every Gowers space is adversarially Ramsey. Actually, it turns out that there is a large class of Gowers spaces for which Borel determinacy can be recovered from the version of Theorem \ref{main} in these spaces; this will be shown in a forthcoming paper \nolinebreak \cite{RancourtRamseyII}.

\smallskip

We will deduce Theorem \ref{main} from Borel determinacy for games on real numbers. For this purpose, we follow an approach used first by Kastanas in \cite{kastanas}: in this paper Kastanas deduced the Ramsey property for subsets of $\elinf$ from the determinacy of a game %(a similar game was already introduced by Mansfield in \cite{Mansfield}, although it was not used to prove the Ramsey property itself but rather to estimate the complexity of a homogeneous set).
(a similar game was already introduced by Mansfield in \cite{Mansfield}, although it was only used in the open case, in order to estimate the complexity of a homogeneous set). In what follows, we adapt Kastanas' game in order to get the adversarial Ramsey property.

\begin{defin}

For $p \in P$, \emph{Kastanas' game} $K_p$ below $p$ is defined as follows:

\smallskip

\begin{tabular}{cccccccc}
\textbf{I} & & $x_0,  q_0$ & & $x_1,  q_1$ & & $\hdots$ \\
\textbf{II} & $p_0$ & & $y_0,  p_1$ & & $y_1,  p_2 $ & & $\hdots$ 
\end{tabular}

\smallskip

\noindent where the $x_i$'s and the $y_i$'s are elements of $X$, and the $p_i$'s and the $q_i$'s are elements of $P$. The rules are the following:

\begin{itemize}

\item for \textbf{I}: for all $i \in \omega$, $(x_0, y_0, \ldots, x_{i - 1}, y_{i - 1}, x_i) \vartriangleleft p_i$ and $q_i \leqslant p_i$;

\item for \textbf{II}: $p_0 \leqslant p$, and for all $i \in \omega$, $(x_0, y_0 \ldots, x_i, y_i) \vartriangleleft q_i$ and  $p_{i+1} \leqslant q_i$.

\end{itemize}

\noindent The outcome of the game is the sequence $(x_0, y_0, x_1, y_1, \ldots) \in X^\omega$.

\end{defin}

The exact result we will show is the following:

\begin{prop}\label{mainaux}

Let $p \in P$ and $\X \subseteq X^\omega$.

\begin{enumerate}

\item If \I{} has a strategy to reach $\X$ in $K_p$, then there exists $q\leqslant p$ such that \I{} has a strategy to reach $\X$ in $A_q$;

\item If \II{} has a strategy to reach $\X^c$ in $K_p$, then there exists $q\leqslant p$ such that \II{} has a strategy to reach $\X^c$ in $B_q$.

\end{enumerate}

\end{prop}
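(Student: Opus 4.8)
The plan is to prove both parts by simulating a play of the Kastanas game $K_p$ inside a play of $A_q$ (resp. $B_q$), for a suitable $q \leqslant p$. The two parts are symmetric in the sense that passing from $K_p$ to $A_q$ requires massaging the moves of player \textbf{I}, while passing from $K_p$ to $B_q$ requires massaging the moves of player \textbf{II}; so I will describe part 1 in detail and indicate that part 2 is dual.

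For part 1, suppose $\sigma$ is a strategy for \textbf{I} to reach $\X$ in $K_p$. First I would use the strategy $\sigma$ to extract a single subspace $q$: the very first move dictated by $\sigma$ is forced to be independent of \textbf{II}'s play (since in $K_p$, \textbf{II} moves first with $p_0 \leqslant p$, but I can instead let \textbf{II} open with $p$ itself, or more carefully, observe that \textbf{I}'s reply to $p_0 = p$ gives some $x_0, q_0$ with $q_0 \leqslant p$) — in fact I should take $q = q_0$ where $q_0$ is \textbf{I}'s response in $K_p$ to the opening move $p_0 = p$ by \textbf{II}. Actually, the cleaner choice is to have \textbf{II} play $p_0$ to be some subspace below $p$ and read off $q := q_0 \leqslant p$; then I claim \textbf{I} has a strategy in $A_q$ to reach $\X$. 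The strategy in $A_q$ is built by maintaining, in parallel, a run of $K_p$ that is consistent with $\sigma$ and whose outcome coincides with that of the $A_q$-run. The bookkeeping: when \textbf{II} in $A_q$ plays $p_{i}$ with $p_i \leqslant q$, I feed this (possibly after replacing it by a common lower bound, using axiom 2 to reconcile $\leqslant$ and $\leqslant^*$) into the $K_p$-run as \textbf{II}'s move; $\sigma$ then returns $x_i, q_i$ with $q_i \leqslant p_i$; since in $A_q$ player \textbf{I} is only required to play $q_i \lessapprox p$ rather than $q_i \leqslant p_i$, I need to replace this $q_i$ by a subspace $q_i'$ with $q_i' \lessapprox q$ and $q_i' \leqslant q_i$, which exists by axiom 2 together with the finite-codimension-type closure built into $\lessapprox$; I play $x_i, q_i'$ in $A_q$. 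The key point is that $q_i' \leqslant q_i$ guarantees \textbf{II}'s next legal moves in $A_q$ (namely $y_i$ with $(\ldots, y_i) \vartriangleleft q_i'$) are, via axiom 5, also legal moves in the simulated $K_p$-run (where \textbf{II} needs $(\ldots, y_i) \vartriangleleft q_i$). The outcomes match by construction, so \textbf{I} reaches $\X$ in $A_q$.

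For part 2, suppose $\tau$ is a strategy for \textbf{II} to reach $\X^c$ in $K_p$; here \textbf{II} opens, and I first extract $q := p_0 \leqslant p$ from $\tau$'s opening move. Now I claim \textbf{II} has a strategy in $B_q$ to reach $\X^c$. Again I simulate a $K_p$-run consistent with $\tau$: when \textbf{I} in $B_q$ plays $x_i, q_i$ with $q_i \leqslant q$, I pass a suitable refinement $q_i' \leqslant q_i$ into the $K_p$-run (note in $K_p$, \textbf{I} needs $q_i \leqslant p_i$ where $p_i$ is the last subspace \textbf{II} played — so I must choose $q_i'$ below both $q_i$ and that $p_i$, again via axiom 2); $\tau$ then returns $y_i, p_{i+1}$ with $p_{i+1} \leqslant q_i'$; in $B_q$ player \textbf{II} must play $p_{i+1} \lessapprox q$, so I replace it by $p_{i+1}'$ with $p_{i+1}' \lessapprox q$ and $p_{i+1}' \leqslant p_{i+1}$. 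As before, $p_{i+1}' \leqslant p_{i+1} \leqslant q_i'$ ensures (by axiom 5) that \textbf{I}'s subsequent legal moves in $B_q$ remain legal in the $K_p$-simulation. The outcomes agree, so \textbf{II} reaches $\X^c$ in $B_q$.

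The main obstacle, and the point requiring the most care, is the repeated replacement of the subspaces handed back by $\sigma$ (resp. $\tau$) by versions lying in the $\lessapprox$-relation to the fixed subspace $q$ while still dominating the strategy's suggestion. This is exactly what axioms 1 and 2 of a Gowers space are designed for — axiom 2 lets one find, below any $p'$ with $p' \leqslant^* q$, a common refinement that is $\lessapprox q$ — but one has to verify that the legality constraints ($\vartriangleleft$-conditions and $\leqslant$-conditions) are preserved under all these substitutions, which is where axiom 5 (monotonicity of $\vartriangleleft$ in the subspace) does the work. No use of Borel determinacy or of axioms 3, 4 is needed here; this proposition is purely a combinatorial comparison of the three games, and its role will be to reduce Theorem \ref{main} to determinacy of $K_p$ (which in turn will follow from Borel determinacy on reals once $K_p$ is recast as a game with real-number moves).
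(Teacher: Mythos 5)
There is a genuine gap, and it sits exactly at the step you describe as the ``main obstacle'' and then dismiss with an appeal to axiom 2. In $B_q$ player \II{} must play subspaces $p'\lessapprox q$, i.e.\ $p'\leqslant q$ \emph{and} $q\leqslant^* p'$: this is a \emph{largeness} requirement on her moves, not a refinement requirement. The strategy $\tau$ in $K_p$, however, only returns some $p_{i+1}\leqslant q_i'$, which may be an arbitrarily ``thin'' subspace (in the Rosendal space, for instance, a block subspace of infinite codimension in $q$). To replace it by $p_{i+1}'$ with $p_{i+1}'\lessapprox q$ and $p_{i+1}'\leqslant p_{i+1}$ you would need $q\leqslant^* p_{i+1}$ (since $q\leqslant^* p_{i+1}'\leqslant^* p_{i+1}$), and nothing in your construction provides this; axiom 2 only applies \emph{after} you know $q\leqslant^* p_{i+1}$, it does not create that relation. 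The same problem kills part 1: the $q_i\leqslant p_i$ returned by \I{}'s Kastanas strategy need not satisfy $q\leqslant^* q_i$, so no $q_i'\lessapprox q$ below it need exist. Consequently your choice $q:=p_0$ (resp.\ $q:=q_0$) cannot work, and no fixed ``first-move'' subspace can.

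This is precisely why the paper's proof is not a direct simulation but a fusion argument. One first proves a diagonalization lemma: given a countable set of partial $\tau$-plays (``states'') and $r\in P$, there is $r^*\leqslant r$ such that whenever $\tau$'s answer $v$ to some continuation of a state is merely \emph{compatible} with $r^*$, one can \emph{re-choose} the input move $(x,u)$ so that the new answer $v'$ satisfies $r^*\leqslant^* v'$. Iterating this along a decreasing sequence $(q_n)$ over larger and larger countable families of states (this is where axiom 3 and the countability of $X$ are indispensable, contrary to your claim that axiom 3 is not needed), one obtains $q$ by fusion; only then does axiom 2 let \II{} convert $\tau$'s outputs into legal moves $\lessapprox q$. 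Note also a second point your simulation misses: the subspace fed to $\tau$ in the simulated $K_p$ is \emph{not} a refinement of the move actually made in $B_q$, but a witness $(u',v')$ pre-selected for the pair $(x,y)$ at the given state, together with an auxiliary ``fictive'' play used to check $(x,y)$ is reachable; without this re-choice the outcome of the simulated play would not be controlled. So the proposition is not ``purely a combinatorial comparison of the three games'' obtainable by move-by-move translation; the diagonalization is the heart of the proof, and your argument as written does not go through.
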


Once this proposition is proved, Theorem \ref{main} will immediately follow from the Borel determinacy of Kastanas' game.

\smallskip

Since the proofs of \emph{1.} and \emph{2.} of Proposition \ref{mainaux} are exactly similar, we only prove \emph{2.}. In order to do this, let us introduce some notation. During the whole proof, we fix a strategy $\tau$ for \II{} in $K_p$ to reach $\X^c$. Following the terminology introduced by Ferenczi and Rosendal in \cite{fero}, a partial play of $K_p$ ending with a move of \II{} and during which \II{} always plays according to her strategy will be called a \emph{state}. Say that a state $\st$ \emph{realises} a finite sequence $(x_0, y_0, x_1, y_1, \ldots, x_{n - 1}, y_{n - 1})$ if $\st$ has the form $(p_0, x_0, ..., q_{n - 1}, y_{n - 1}, p_n)$; say that a state realising a sequence of length $2n$ \emph{has rank $n$}. Define in the same way the notions of a \emph{total state} (which is a total play of $K_p$) and of realisation for a total state; the restriction of a total state $\st =  (p_0, x_0, q_0, y_0, p_1, ...)$ to a state of rank $n$, denoted by $\st_{\restriction n}$, is the state $(p_0, x_0, ..., q_{n - 1}, y_{n - 1}, p_n)$. If an infinite sequence $(x_0, y_0, x_1, y_1, \ldots)$ is realised by a total state, then this sequence belongs to $\X^c$.

\smallskip

We will use the following lemma:

\begin{lem}\label{mainlem}

Let $\St$ be an at most countable set of states, and $r \in P$. Then there exists $r^* \leqslant r$ satisfying the following property: for all $\st \in \St$ and $x, y \in X$ if there exists $u, v \in P$ such that:

\begin{enumerate}

\item \I{} can legally continue the play $\st$ by the move $(x, u)$;

\item $\tau(\st \concat (x, u)) = (y, v)$;

\item $v$ and $r^*$ are compatible;

\end{enumerate}

\noindent then there exists $u', v' \in P$ satisfying \emph{1.}, \emph{2.}, and \emph{3.} and such that, moreover, we have $r^* \leqslant^* v'$.

\end{lem}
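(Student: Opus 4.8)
The plan is to build $r^*$ as the limit (via axiom 3 of Gowers spaces) of a $\leqslant$-decreasing sequence of subspaces obtained by a bookkeeping argument that handles, one at a time, each of the countably many triples $(\st, x, y) \in \St \times X \times X$. Fix an enumeration $(\st_k, x_k, y_k)_{k \in \omega}$ of $\St \times X \times X$. I would construct a $\leqslant$-decreasing sequence $r = r_0 \geqslant r_1 \geqslant r_2 \geqslant \cdots$ as follows. Given $r_k$, consider whether there exist $u, v \in P$ such that \I{} can legally continue $\st_k$ by $(x_k, u)$, $\tau(\st_k \concat (x_k, u)) = (y_k, v)$, and $v$ is compatible with $r_k$. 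If no such $u, v$ exist, set $r_{k+1} = r_k$. If they do, pick one such pair $(u_k, v_k)$; since $v_k$ and $r_k$ are compatible, choose a common lower bound $w_k \leqslant v_k$, $w_k \leqslant r_k$, and then set $r_{k+1} = w_k$. The key point is that this $r_{k+1}$ now \emph{witnesses} the conclusion for the triple $(\st_k, x_k, y_k)$: we have $r_{k+1} \leqslant v_k$, hence in particular $r_{k+1} \leqslant^* v_k$ by axiom 1, and $v_k$ is trivially compatible with $r_{k+1}$ (as $r_{k+1} \leqslant v_k$), so $(u_k, v_k)$ plays the role of $(u', v')$.

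Having built the sequence, apply axiom 3 (together with the observation at the end of Definition \ref{DefGowersSpaces}) to get $r^* \in P$ with $r^* \leqslant r_0 = r$ and $r^* \leqslant^* r_k$ for every $k$. It remains to check that $r^*$ works. Let $\st \in \St$, $x, y \in X$, say $(\st, x, y) = (\st_k, x_k, y_k)$, and suppose there are $u, v \in P$ satisfying conditions 1, 2, 3 \emph{with $r^*$ in place of $r^*$} — i.e. $v$ is compatible with $r^*$. Since $r^* \leqslant^* r_k$, by axiom 2 there is $t \leqslant r^*$, $t \leqslant r_k$ with $r_k \leqslant^* t$; as $v$ is compatible with $r^*$ and $t \leqslant r^*$... here I need to be slightly careful: compatibility with $r^*$ does not immediately give compatibility with $r_k$. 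Instead I would argue directly: a common lower bound of $v$ and $r^*$ is, since $r^* \leqslant r_0$ and more relevantly since we want compatibility with $r_k$, not automatically below $r_k$. So the cleaner route is: at stage $k$, the pair $(u,v)$ satisfying 1–3 relative to $r^*$ also satisfies 1–3 relative to $r_k$, because $v$ compatible with $r^*$ and $r^* \leqslant^* r_k$ imply $v$ compatible with $r_k$ — indeed if $z \leqslant v$ and $z \leqslant r^*$, then $z \leqslant^* r^* \leqslant^* r_k$, and then by axiom 2 applied to $z \leqslant^* r_k$ there is $z' \leqslant z \leqslant v$ and $z' \leqslant r_k$, so $v$ and $r_k$ are compatible. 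Hence at stage $k$ the ``if'' branch of the construction was taken, so $(u_k, v_k)$ was chosen and $r_{k+1} \leqslant v_k$. Then $(u', v') := (u_k, v_k)$ satisfies 1 and 2 by choice, satisfies 3 relative to $r^*$ since $r^* \leqslant^* r_{k+1} \leqslant^* v_k$ forces (again via axiom 2) compatibility of $v_k$ with $r^*$, and satisfies the extra requirement $r^* \leqslant^* v'$ since $r^* \leqslant^* r_{k+1} \leqslant v_k = v'$ gives $r^* \leqslant^* v'$.

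The only real subtlety — and the step I expect to be the main obstacle to write cleanly — is the bookkeeping between $\leqslant$ and $\leqslant^*$ and the repeated invocations of axioms 1 and 2 to propagate compatibility and the relation $\leqslant^*$ down the tower to $r^*$; everything else is a routine fusion-sequence argument. One should double-check that ``\I{} can legally continue $\st$ by $(x, u)$'' is a condition that does not involve $r^*$ at all (it only constrains $u$ relative to the previous moves in $\st$ via $\vartriangleleft$ and $\leqslant$), so it is stable under replacing one candidate subspace by another; this is what makes it legitimate to re-use the witnesses $(u_k, v_k)$ frozen at stage $k$.
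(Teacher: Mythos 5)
Your proposal is correct and is essentially the paper's own proof: the same enumeration of $\St \times X^2$, the same $\leqslant$-decreasing fusion sequence with frozen witnesses $(u_k, v_k)$, a diagonal $r^* \leqslant^* r_k$ from axiom 3 (with $r^* \leqslant r$ via the remark after Definition \ref{DefGowersSpaces}), and the same verification that compatibility with $r^*$ propagates to $r_k$ so that the frozen pair works and $r^* \leqslant^* v_k$. The only difference is that you spell out the compatibility-propagation step via axioms 1 and 2, which the paper leaves implicit.
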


\begin{proof}

Let $(\st_n, x_n, y_n)_{n \in \omega}$ be a (non-necessarily injective) enumeration of $\St \times X^2$. Define $(r_n)_{n \in \omega}$ a decreasing sequence of elements of $P$ in the following way. Let $r_0 = r$. For $n \in \omega$, suppose $r_n$ defined. If there exists a pair $(u, v) \in P^2$ such that:

\begin{itemize}

\item \I{} can legally continue the play $\st_n$ by the move $(x_n, u)$;

\item $\tau(\st_n \concat (x_n, u)) = (y_n, v)$;

\item $v$ and $r_n$ are compatible;

\end{itemize}

\noindent then choose $(u_n, v_n)$ such a pair and let $r_{n + 1}$ be a common lower bound to $r_n$ and $v_n$. Otherwise, let $r_{n + 1} = r_n$. This achieves the construction.

\smallskip

By the definition of a Gowers space, there exists $r^* \in P$ such that $r^* \leqslant r$ and for all $n \in \omega$, $r^* \leqslant^* r_n$. We show that $r^*$ is as required. Let $n \in \omega$, and suppose that there exists $(u, v) \in P^2$ satisfying properties \emph{1.}, \emph{2.}, and \emph{3.} as in the statement of the lemma for the triple $(\st_n, x_n, y_n)$. Since $r^* \leqslant^* r_n$ and since $v$ and $r^*$ are compatible, then $v$ and $r_n$ are also compatible. This show that the pair $(u_n, v_n)$ has been defined; by construction, this pair satisfies properties \emph{1.} and \emph{2.} for $(\st_n, x_n, y_n)$, and we have $r_{n + 1} \leqslant v_n$, so $r^* \leqslant ^* v_n$, which shows that $(u', v') = (u_n, v_n)$ is as required.

\end{proof}

\begin{proof}[Proof of Proposition \ref{mainaux}]

Define $(q_n)_{n \in \omega}$ a decreasing sequence of elements of $P$ and $(\St_n)_{n \in \omega}$ a sequence where, for every $n \in \omega$, $\St_n$ is an at most countable set of states of rank $n$, in the following way. Let $q_0 = \tau(\varnothing)$ and $\St_0 = \{(\tau(\varnothing))\}$. For $n \in \omega$, suppose $q_n$ and $\St_n$ being defined. Let $q_{n + 1}$ be the result of the application of Lemma \ref{mainlem} to $q_n$ and the set of states $\St_n$. For $\st \in \St_n$, let $\Ast$ be the set of all pairs $(x, y)$ such that there exists $(u, v) \in P^2$ satisfying:

\begin{enumerate}

\item \I{} can legally continue the play $\st$ by the move $(x, u)$;

\item $\tau(\st \concat (x, u)) = (y, v)$;

\item $v$ and $q_{n + 1}$ are compatible.

\end{enumerate}

\noindent Then by construction of $q_{n + 1}$, for all $(x, y) \in \Ast$, there exists a pair $(u, v) \in P^2$ satisfying 1., 2., and 3., and such that moreover $q_{n + 1} \leqslant^* v$. For each $(x, y) \in \Ast$, choose $(u_{\stp, x, y}, v_{\stp, x, y})$ such a pair. Let $\St_{n + 1} = \{\st \concat (x, u_{\stp, x, y}, y, v_{\stp, x, y}) \, | \, \st \in \St_n, \; (x, y) \in \Ast\}$; this is clearly a countable set of states of rank $n + 1$. This achieves the construction.

\smallskip

Now let $q \in P$ be such that $q \leqslant q_0$ and for all $n \in \omega$, we have $q \leqslant^* q_n$. Observe that since $q_0 \leqslant p$, we have $q \leqslant p$. We show that $q$ is as required, by describing a strategy for \II{} in $B_q$ to reach $\X^c$. In order to do this, we simulate the play $\st = (v_0, x_0, u_0, y_0, v_1, ...)$ of $B_q$ that \I{} and \II{} are playing by a play $\st' = (v'_0, x_0, u'_0, y_0, v'_1, ...)$ of $K_p$ having the same outcome and during which \II{} always plays according to her strategy $\tau$. This will ensure that the outcome $(x_0, y_0, x_1, y_1, \ldots)$ of both games lies in $\X^c$ and so that the strategy for \II{} in $B_q$ that we described enables her to reach her goal. We do this construction in such a way that at each turn $n$, the following conditions are kept satisfied:

\begin{enumerate}

\item[(a)] $\st'_{\restriction n} \in \St_n$;

\item[(b)] $v_n \leqslant v'_n$.

\end{enumerate}

The moves of the players at the $(n+1)^\text{th}$ turn in both games that are described in the following proof are represented in the diagrams below. The third diagram, called ``Fictive $K_p$'', represents a fictive situation that will be studied for technical reasons in the proof, and in which the moves of both players are the same as in $K_p$ until the $n^\text{th}$ turn but differ from the $(n + 1)^\text{th}$ turn.

\bigskip

\begin{tabular}{ccccccccc}
& & \textbf{I} & $\hdots$ & & & $x_n, \, u_n$ & &  \\
$B_q$ & & & & & & \\
& & \textbf{II} & & $\hdots, \, v_n$ & & & & $y_n,  \, v_{n + 1}$ \\
\\
\\
\\
& & \textbf{I} & $\hdots$ & & & $x_n, \, u'_n$ & &  \\
$K_p$ & & & & & & \\
& & \textbf{II} & & $\hdots, \, v'_n$ & & & & $y_n,  \, v'_{n + 1}$ \\
\\
\\
\\
& & \textbf{I} & $\hdots$ & & & $x_n, \, u''_n$ & &  \\
Fictive $K_p$ & & & & & & \\
& & \textbf{II} & & $\hdots, \, v'_n$ & & & & $y_n,  \, v''_{n + 1}$
\end{tabular}

\bigskip

Let us describe the strategy of \II{} in $B_q$. At the first turn, this strategy will consist in playing $v_0 = q$; and, according to her strategy $\tau$, \II{} will play $v'_0 = \tau(\varnothing)$ in $K_p$. Now, suppose that both games have been played until the $n$\textsuperscript{th} turn, that is, the last moves of player \II{} in the games $B_q$ and $K_p$ are respectively $v_n$ and $v'_n$.  Player \I{} plays $(x_n, u_n)$ in $B_q$. By the rules of the game $B_q$ and the induction hypothesis, we have that $u_n \leqslant q \leqslant^* v_n \leqslant v'_n$; so there exists $u''_n \in P$ such that $u''_n \lessapprox u_n$ and $u''_n \leqslant v'_n$. We also have that $(x_0, y_0, \ldots, x_n) \vartriangleleft v_n \leqslant v'_n$, so it is legal for \I{} to pursue the game $K_p$ by playing $(x_n, u''_n)$; this fictive situation is represented in the third diagram above, called ``Fictive $K_p$''. In this fictive situation, the strategy $\tau$ of \II{} would lead her to answer with a move $(y_n, v''_{n + 1})$ satisfying $(x_0, y_0, \ldots, x_n, y_n) \vartriangleleft u''_n$ and $v''_{n + 1} \leqslant u''_n$. We have, by construction of $q$, that $v''_{n + 1} \leqslant u''_n \leqslant u_n \leqslant q \leqslant^* q_{n + 1}$; so in particular, $v''_{n + 1}$ and $q_{n + 1}$ are compatible. Recalling that $\st_{\restriction n} \in \St_n$, we see that the pair $(u''_n, v''_n)$ witnesses that $(x_n, y_n) \in \Asti{\restriction n}$.

\smallskip

Now let us leave the fictive situation and come back to the ``real'' $K_p$. Since $(x_n, y_n) \in \Asti{\restriction n}$, we know that the pair $(u_{\stp_{\restriction n}, x_n, y_n}, v_{\stp_{\restriction n}, x_n, y_n})$ has been defined; we denote this pair by $(u'_n, v'_{n + 1})$. In the ``real'' $K_p$, we make \I{} play $(x_n, u'_n)$. By definition of $(u'_n, v'_{n + 1})$, this move is legal, and \II{} will answer, according to her strategy, with $(y_n, v'_{n + 1})$. Observe that the required condition (a) in the induction hypothesis is satisfied by these moves since, by the definition of $\St_{n + 1}$, we have $\st_{\restriction n} \concat (x_n, u'_n, y_n, v'_{n + 1}) \in \St_{n + 1}$. We also have that $q \leqslant^* q_{n + 1} \leqslant^* v'_{n + 1}$, so there exists $v_{n + 1} \in P$ such that $v_{n + 1} \leqslant v'_{n +1}$ and $v_{n + 1} \lessapprox q$. For this reason, and since we also have (as we already saw) $(x_0, y_0, \ldots, x_n, y_n) \vartriangleleft u''_n \leqslant u_n$, we get that $(y_n, v_{n + 1})$ is a legal move for \II{} in $B_q$, that satisfies the condition (b) in the induction hypothesis. So we just have to define her strategy as leading her to play this move, and this achieves the proof.

\nopagebreak

\end{proof}

We actually proved a little more than Theorem \ref{main}. Say that the Gowers space $\G$ is \emph{analytic} if $P$ is an analytic subset of a Polish space and if the relations $\leqslant$ and $\vartriangleleft$ are Borel subsets of $P^2$ and of $\Seq(X) \times P$ respectively. For most of the spaces we actually use, $P$ can be identified to an analytic subset of $\mathcal{P}(X)$, the relation $\leqslant$ to the inclusion, and the relation $(x_0, \ldots, x_n) \vartriangleleft p$ to the membership relation $x_n \in p$%(or the ``approximate'' membership relation in the approximate setting, as we will see in the proof of theorem \ref{ThmApproxRamsey})
; thus, these spaces are analytic. This is, for instance, the case for the Mathias--Silver space and the Rosendal space introduced at the beginning of this section.

\smallskip

Now, say that a class $\Gamma$ of subsets of Polish spaces is \emph{suitable} if it contains the class of Borel sets and is stable under finite unions, finite intersections and Borel inverse images. Equip $\R$ with its usual Polish topology, and $\R^\omega$ with the product topology. Then an easy consequence of Proposition \ref{mainaux} is the following:

\begin{cor}\label{AdvGamma}

Let $\Gamma$ be a suitable class of subsets of Polish spaces. If every $\Gamma$-subset of $\R^\omega$ is determined, then for an analytic Gowers space $\G = (P, X, \leqslant, \leqslant^*, \vartriangleleft)$, every $\Gamma$-subset of $X^\omega$ is adversarially Ramsey.

\end{cor}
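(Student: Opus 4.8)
The plan is to re-run the proof of Theorem~\ref{main} --- that is, Proposition~\ref{mainaux} followed by the determinacy of Kastanas' game --- while keeping track of descriptive complexity, so as to see that, when $\X \in \Gamma$, only $\Gamma$-determinacy on $\R^\omega$ is needed. Fix $p \in P$ and a set $\X \subseteq X^\omega$ belonging to $\Gamma$. By Proposition~\ref{mainaux} it suffices to show that \emph{either \I{} has a strategy to reach $\X$ in $K_p$, or \II{} has one to reach $\X^c$ in $K_p$}: the required $q \leqslant p$ is then produced by that proposition, and since $p$ is arbitrary this yields that $\X$ is adversarially Ramsey. The crucial observation is that the rules of $K_p$ involve only the relations $\vartriangleleft$ and $\leqslant$ (never $\leqslant^*$), and these are exactly the relations that the definition of an analytic Gowers space forces to be Borel.

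The first step is to recast $K_p$ as an ordinary game on $\R$. Since $\G$ is analytic, $P$ is an analytic subset of a Polish space $Y$; fix a continuous surjection $f \colon \omega^\omega \to P$ together with Borel sets $B \subseteq Y^2$ and $C \subseteq \Seq(X) \times Y$ with $\leqslant = B \cap P^2$ and $\vartriangleleft = C \cap (\Seq(X) \times P)$. Now let the players, instead of playing a pair consisting of a point of $X$ and a subspace in $P$, play a pair consisting of a point of $X$ and an element of $\omega^\omega$, this last component being read through $f$ as a \emph{code} for a subspace; since $X$ is countable and $\omega^\omega$ embeds into $\R$, a move of this modified game is then coded by a single real and a play by an element of $\R^\omega$ (up to a fixed homeomorphic recoding, which is immaterial here). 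Because $f$ is continuous, once subspaces are replaced by their codes every constraint of $K_p$ turns into a Borel condition on the reals played --- it is a pullback of $B$ or of $C$ along a continuous map --- so one may run the resulting game $\widehat{K}_p$ with no forbidden moves, decreeing instead that the first player to play a real violating the (now Borel) constraints of $K_p$, or not of the prescribed form, loses. No test ``does this move lie in $P$'' is needed, since every $\sigma \in \omega^\omega$ codes the subspace $f(\sigma) \in P$.

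Let $W_{\I} \subseteq \R^\omega$ be \I{}'s payoff set in $\widehat{K}_p$: the set of plays in which either no player ever breaks a rule and the outcome (a Borel function of the play) lies in $\X$, or \II{} is the first to break a rule. The three sets ``nobody ever breaks a rule'', ``\I{} breaks a rule first'' and ``\II{} breaks a rule first'' are Borel and partition $\R^\omega$, so $W_{\I}$ is obtained from $\X$ by one Borel preimage and finitely many Boolean operations with Borel sets; hence $W_{\I} \in \Gamma$, since $\Gamma$ is suitable (observe that neither $\X^c$ nor $W_{\I}^c$ need lie in $\Gamma$). By hypothesis $W_{\I}$ is determined. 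If \I{} has a winning strategy in $\widehat{K}_p$, it transfers to a strategy of \I{} in $K_p$ reaching $\X$: one plays $K_p$ while maintaining a simulated play of $\widehat{K}_p$ in which each subspace played by \II{} is represented by a code for it in $\omega^\omega$ (fixed once and for all), \I{} follows his $\widehat{K}_p$-strategy there, and each move $(x, \sigma)$ it prescribes is exported to $K_p$ as $(x, f(\sigma))$. Since \II{} plays legally in $K_p$, her simulated moves are legal in $\widehat{K}_p$, so \I{}'s winning strategy never breaks a rule there (breaking a rule first would make him lose); hence no rule is ever broken and the common outcome lies in $\X$. Symmetrically, a winning strategy for \II{} in $\widehat{K}_p$ --- one forcing the play into $W_{\I}^c$, i.e.\ ``\I{} breaks a rule first, or nobody does and the outcome lies in $\X^c$'' --- gives a strategy for \II{} in $K_p$ reaching $\X^c$. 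In either case the hypothesis of Proposition~\ref{mainaux} is met, which finishes the argument.

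The one delicate point --- and the only place where both the suitability of $\Gamma$ and the coding of $P$ through the continuous map $f$ are genuinely used --- is the verification that $W_{\I} \in \Gamma$. If subspaces were played directly rather than through codes, the clause ``this move belongs to $P$'' would be merely analytic, the ``who broke a rule first'' clauses would then fail to be Borel, and $W_{\I}$ could leave $\Gamma$; already in the case where $\Gamma$ is the class of Borel sets --- which is precisely the instance reproving Theorem~\ref{main} for analytic Gowers spaces --- this would be fatal. Everything else (constructing $\widehat{K}_p$, reading off outcomes, transferring strategies) is routine bookkeeping with the axioms of a Gowers space.
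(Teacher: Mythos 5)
Your proof is correct and follows essentially the same route as the paper: reduce to the determinacy of Kastanas' game via Proposition~\ref{mainaux}, replace subspaces by codes under a surjection onto the analytic set $P$ (the paper uses a Borel surjection $\R \to P$, you a continuous surjection $\omega^\omega \to P$) so that the rules become Borel, absorb the rules into the payoff set, which stays in $\Gamma$ by suitability, and apply $\Gamma$-determinacy on $\R^\omega$. Your write-up merely makes explicit the bookkeeping (the ``who broke a rule first'' decomposition and the strategy transfer) that the paper compresses into ``this game is clearly equivalent to $K_p$''.
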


\begin{proof}

Fix $\X \subseteq X^\omega$ a $\Gamma$-subset, and $p \in P$. By Proposition \ref{mainaux}, it is enough to show that in the game $K_p$, either player \I{} has a strategy to reach $\X$, or player \II{} has a strategy to reach $\X^c$. Let $\varphi : \R \longrightarrow P$ be a surjective Borel mapping, and consider the following game $K'_p$:

\smallskip

\begin{tabular}{cccccccc}
\textbf{I} & & $x_0,  \widetilde{q_0}$ & & $x_1,  \widetilde{q_1}$ & & $\hdots$ \\
\textbf{II} & $\widetilde{p_0}$ & & $y_0,  \widetilde{p_1}$ & & $y_1,  \widetilde{p_2}$ & & $\hdots$ 
\end{tabular}

\smallskip

\noindent where the $x_i$'s and the $y_i$'s are elements of $X$ and the $\widetilde{p_i}$'s and the $\widetilde{q_i}$'s are real numbers, with the constraint that $\varphi(\widetilde{p_0}) \leqslant p$, for all $i \in \omega$, $\varphi(\widetilde{q_i}) \leqslant \varphi(\widetilde{p_i})$, $\varphi(\widetilde{p_{i + 1}}) \leqslant \varphi(\widetilde{q_i})$, $(x_0, y_0, \ldots, x_i) \vartriangleleft \varphi(\widetilde{p_i})$, and $(x_0, y_0, \ldots, x_i, y_i) \vartriangleleft \varphi(\widetilde{q_i})$, and whose outcome is the sequence $(x_0, y_0, x_1, y_1, \ldots) \in X^\omega$. This game is clearly equivalent to $K_p$: \I{} has a strategy to reach $\X$ in $K_p$ if and only if he has one in $K'_p$, and \II{} has a strategy to reach $\X^c$ in $K_p$ if and only if she has one in $K'_p$. Since $K'_p$ is a game on real numbers with Borel rules and since $\X$ is in $\Gamma$, we deduce that in this game, either \I{} has a strategy to reach $\X$, or \II{} has a strategy to reach $\X^c$, what concludes the proof.

\end{proof}

Martin proved in \cite{MartinMeasurable} that if there exists a measurable cardinal, then every analytic subset of $\omega^\omega$ is determined. He observed in this paper that his proof actually shows a little more: if $\kappa$ is a measurable cardinal, if $S$ is a set with the discrete topology such that $|S| < \kappa$ and if $S^\omega$ is endowed with the product topology, then every $\mathbf{\Sigma}_1^1$-subset of $S^\omega$ is determined. (Here, a $\mathbf{\Sigma}_1^1$-subset of a topological space $X$ is defined as a set $A \subseteq X$ which is the first projection of some closed subset of $X \times \omega^\omega$.) In particular, if there exists a measurable cardinal, then every analytic subset of $\R^\omega$ is determined, and by the last corollary, every analytic set in an analytic Gowers space is adversarially Ramsey. This gives an answer to Question \ref{mainvectoranalytic}.

\smallskip

Recall that $PD_\R$ is the statement ``every projective subset of $\R^\omega$ (with $\R$ endowed with its usual Polish topology) is determined'' and that $AD_\R$ is the statement ``every subset of $\R^\omega$ is determined''. Corollary \ref{AdvGamma} shows in particular that, in an analytic Gowers space, under $PD_\R$, every projective set is adversarially Ramsey. Recall that Harrington and Kechris \cite{harringtonkechris}, and independently Woodin \cite{Woodin} proved that under $PD$, every projective subset of $\elinf$ is Ramsey. Using ideas from Woodin's proof, Bagaria and L\'opez-Abad \cite{BagariaLopezAbad2} showed that under $PD$, every projective set of block sequences of a basis of a Banach space is strategically Ramsey (i.e. satisfies the conclusion of Gowers' Theorem \ref{GowersThm}). Basing ourselve on these facts, we can formulate the following conjecture:

\begin{conj}

Under $PD$, if the Gowers space $\G = (P, X, \leqslant, \leqslant^*, \vartriangleleft)$ is analytic, then every projective subset of $X^\omega$ is adversarially Ramsey.

\end{conj}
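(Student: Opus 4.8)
The starting point is Corollary~\ref{AdvGamma}: to establish the conjecture it suffices, given a projective $\X \subseteq X^\omega$ and $p \in P$, to prove the dichotomy for Kastanas' game $K_p$, namely that \I{} has a strategy to reach $\X$ in $K_p$ or \II{} has one to reach $\X^c$. The obstruction to invoking $PD$ directly is that at each round of $K_p$ one of the moves is a subspace, i.e.\ (when $\G$ is analytic) an element of a Polish space, so $K_p$ is a game on an uncountable board --- equivalently a game of length $\omega$ on $\R$, which, interleaved, is a game of length $\omega^2$ on $\omega$ --- and the determinacy of such games for projective payoffs is considerably stronger than $PD$ (already $\boldsymbol{\Sigma}^1_1$ determinacy for games on $\R$ is not provable in $ZFC$). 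So Corollary~\ref{AdvGamma} fed with $PD$ is not enough, and one must instead adapt the unfolding technique of Woodin and Harrington--Kechris \cite{Woodin, harringtonkechris} underlying the Ramsey property of projective sets under $PD$, in the form in which Bagaria--L\'opez-Abad \cite{BagariaLopezAbad2} used it for strategic Ramsey.

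Concretely, fix $p$ and a projective $\X$, raise its complexity by one projective level if necessary so that it lies in a pointclass with the scale property, and use $PD$ together with the periodicity theorems to equip $\X$ with a projective scale; this yields a tree $T$ on $X \times \kappa$, with $\kappa$ a projective ordinal, such that $\X = p[T]$ and the associated norms are lower semicontinuous along $T$. Now play the \emph{unfolded Kastanas' game} $\widetilde{K_p}$: the rules are those of $K_p$, but at each round \I{} plays, besides his usual move, an ordinal $< \kappa$, subject to the extra requirement that the pair formed by the sequence of points played so far and the sequence of ordinals played so far always lies in $T$. If \I{} can respect this requirement forever, the outcome of the underlying $K_p$-play automatically belongs to $p[T] = \X$, so \I{}'s goal in $\widetilde{K_p}$ is just to play legally forever --- an open/closed winning condition. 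Hence $\widetilde{K_p}$ is an open/closed game on a Polish board, and (its rules being Borel when $\G$ is analytic) it is determined; this is covered by the same determinacy of games on a Polish board used to prove Theorem~\ref{main} --- indeed already by Gale--Stewart.

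It then remains to transfer a winning strategy in $\widetilde{K_p}$ to one in $K_p$ and to apply Proposition~\ref{mainaux}. If \I{} wins $\widetilde{K_p}$, forgetting the ordinals gives a strategy for \I{} in $K_p$ all of whose outcomes lie in $\X$, hence by Proposition~\ref{mainaux}(1) a $q \leqslant p$ with \I{} winning $A_q$ for $\X$. If \II{} wins $\widetilde{K_p}$, her strategy reacts to \I{}'s ordinal moves, so to extract a genuine $K_p$-strategy for \II{} one must have her respond as if \I{} always played the \emph{leftmost} $T$-consistent continuation on the ordinal side, and then show, using the lower semicontinuity of the scale norms, that a \II{}-strategy defeating all such ``virtual'' plays forces the actual outcome out of $p[T]$, i.e.\ into $\X^c$; Proposition~\ref{mainaux}(2) then gives a $q \leqslant p$ with \II{} winning $B_q$ for $\X^c$. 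This last transfer is the main obstacle: it is the exact counterpart in this framework of the key lemma in Woodin's proof, and carrying it out for the adversarial games --- heavier than in the strategic case because the games are two-sided --- is presumably what makes the statement only a conjecture. An alternative would be to unfold, \`a la Bagaria--L\'opez-Abad, a combinatorial-forcing proof of the adversarial Ramsey principle, but the game-theoretic route through $\widetilde{K_p}$ is the one that fits the proof of Theorem~\ref{main}.
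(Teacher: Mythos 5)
This statement is a \emph{conjecture} in the paper: no proof is given, and the author explicitly remarks that ``the method presented in the present paper does not enable to prove this.'' Your proposal is therefore not being measured against a hidden argument --- there is none --- and, to your credit, you essentially diagnose this yourself. But as a proof it has a genuine gap, located exactly where you say it is: the transfer of a winning strategy for \II{} in the unfolded game $\widetilde{K_p}$ back to a strategy for \II{} in $K_p$ reaching $\X^c$. Saying that this step ``is presumably what makes the statement only a conjecture'' is an accurate assessment of the state of the art, but it is an admission that the proof is incomplete, not a proof.

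Let me be concrete about why the transfer fails as sketched. If \II{} wins the closed game $\widetilde{K_p}$, her strategy may react to \I{}'s ordinal moves, so different ordinal choices by \I{} produce different point-sequences; all you learn is that against \emph{each} fixed sequence of ordinal moves the position eventually leaves $T$, which does not say that the outcome of any single folded play has no branch of $T$ above it. Your proposed fix --- have \II{} respond as if \I{} always played the leftmost $T$-consistent ordinal --- is the move one makes when $T$ is a \emph{homogeneous} tree (as in Martin's proof of analytic determinacy from a measurable), where the measures let one integrate out the ordinal coordinates. A tree coming from a scale under $PD$ is not homogeneous, and lower semicontinuity of the norms does not by itself show that a play surviving the leftmost-consistent ordinals lies outside $p[T]$. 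This is precisely why Harrington--Kechris and Bagaria--L\'opez-Abad do \emph{not} argue by direct strategy transfer but by combinatorial arguments specific to their settings (Mathias-style fusion, respectively the one-sided structure of the strategic Ramsey property), and why no such argument is currently known for the two-sided adversarial games. Two smaller points: the determinacy of $\widetilde{K_p}$ is unproblematic (Gale--Stewart handles closed games on any well-orderable board, so the board containing ordinals $<\kappa$ is harmless), and the \I{}-wins direction is indeed the easy one; but neither of these rescues the argument. What you have is a plausible research programme, correctly aimed, with its central lemma unproved.
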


Clearly, the method presented in the present paper does not enable to prove this.

\smallskip

Also observe that the proof of Proposition \ref{mainaux} can almost entierly be done in $ZF + DC$; the only use of the full axiom of choice is made to choose $u''_n \in P$ such that $u''_n \lessapprox u_n$ and $u''_n \leqslant v'_n$, and $v_{n + 1} \in P$ such that $v_{n + 1} \leqslant v'_{n +1}$ and $v_{n + 1} \lessapprox q$, so actually to apply axiom 2. in the definition of a Gowers space. For this reason, say that the Gowers space $\mathcal{G}$ is \emph{effective} if in this axiom 2., the subspace $r$ can be chosen in an effective way, that is, if there exist a function $f : P^2 \longrightarrow P$ such that for every $p, q \in P$, if $p \leqslant^* q$, then we have $f(p, q) \lessapprox p$ and $f(p, q) \leqslant q$. For instance:

\begin{itemize}

\item The Mathias--Silver space is effective: indeed, if $M \subseteq^* N$, then we can take $f(M, N) = M \cap N$.

\item The Rosendal space is effective. Indeed, if $X$ and $Y$ are block subspaces such that $X \subseteq^* Y$, let $(x_n)_{n \in \omega}$ be a block sequence spanning $X$. Then we can let $f(X, Y)$ be the subspace spanned by the largest final segment of $(x_n)$ all of whose terms are in $Y$ (this subspace does not depend on the choice of $(x_n)$).

\end{itemize}

To prove Proposition \ref{mainaux} for an effective Gowers space, we only need dependant choices. Thus, we have the following result:

\begin{cor}[$ZF + DC + AD_\R$]\label{AdvAD}

Let $\G = (P, X, \leqslant, \leqslant^*, \vartriangleleft)$ be an effective Gowers space such that $P$ is a subset of a Polish space. Then every subset of $X^\omega$ is adversarially Ramsey.

\end{cor}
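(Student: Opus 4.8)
The plan is to run the argument behind Corollary~\ref{AdvGamma} with $AD_\R$ in place of $\boldsymbol\Gamma$-determinacy on $\R^\omega$, taking care that every step lives in $ZF+DC$. Fix $\X\subseteq X^\omega$ and $p\in P$; I want a $q\leqslant p$ witnessing the adversarial Ramsey property for $\X$ at $p$. By Proposition~\ref{mainaux} it suffices to prove that Kastanas' game $K_p$ — with the convention that \I{} wins a run exactly when its outcome lies in $\X$, a player making an illegal move losing at once — is determined. Here I first invoke the observation recorded just above the statement of the corollary: for an \emph{effective} Gowers space the proofs of Lemma~\ref{mainlem} and of Proposition~\ref{mainaux} go through in $ZF+DC$, since the only appeals to choice beyond $DC$ were the two uses of axiom~2 of Definition~\ref{DefGowersSpaces} producing $u''_n$ and $v_{n+1}$, and these are now supplied canonically by the function $f$ witnessing effectiveness, while all the remaining constructions are recursions along $\omega$ (the $r_n$, the $q_n$, the $\St_n$, the outputs of axiom~3) or enumerations of at most countable sets, hence available under $DC$. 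So, granting $AD_\R$, everything reduces to the determinacy of $K_p$.

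To obtain this I relabel $K_p$ as a game on $\R$. Since $P$ is a subset of a Polish space, and every separable metric space embeds into the Hilbert cube $[0,1]^\omega$, which injects into $\R$ by an explicit map, I may fix once and for all, \emph{in $ZF$}, an injection $\iota\colon P\hookrightarrow\R$, together with an injection of the at most countable set $X$ into $\R$ (a pair of reals being in turn coded by a single real). Let $K'_p$ be the game obtained from $K_p$ by having the players play the $\iota$-codes of their subspaces and the fixed codes of their points, the rules of $K_p$ being transported through these codings. Then $K'_p$ is a two-player perfect-information game all of whose moves are reals, and its payoff set is a fixed subset of $\R^\omega$ definable from the fixed parameters $\iota$, $P$, $\leqslant$, $\vartriangleleft$, $\X$; so $AD_\R$ applies and $K'_p$ is determined.

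It remains to descend from $K'_p$ to $K_p$. By axiom~4 of Definition~\ref{DefGowersSpaces} together with the reflexivity of $\leqslant$, in $K_p$ each player can always continue the play legally; hence so can each player in $K'_p$, by playing codes of legal $K_p$-moves. Therefore a \emph{winning} strategy in $K'_p$, against legal opposition, never itself produces a real outside the relevant code sets nor one whose decoding violates a $\vartriangleleft$- or $\leqslant$-constraint — such a move would lose that run — so it genuinely witnesses determinacy of $K'_p$, and decoding it through $\iota^{-1}$ (defined on the range of $\iota$) and the fixed coding of $X$ turns it into a strategy in $K_p$ of the same kind: if \I{} wins $K'_p$, she forces the outcome into $\X$ in $K_p$, and if \II{} wins $K'_p$, she forces it into $\X^c$. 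Since $K'_p$ is determined, one of the two alternatives holds, $K_p$ is determined, and Proposition~\ref{mainaux} delivers the required $q\leqslant p$.

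The step that needs care is carrying out the two previous paragraphs without the axiom of choice. Over $ZFC$ the equivalence between $K_p$ and a game on reals is immediate — it is the content of the proof of Corollary~\ref{AdvGamma}, where a section of the coding surjection exists by choice — whereas here one must exhibit the coding $\iota$ inside $ZF$, which is precisely what the hypothesis that $P$ is a subset of a Polish space provides via the embedding into the Hilbert cube, and one must keep the combinatorial core (Proposition~\ref{mainaux} and Lemma~\ref{mainlem}) within $ZF+DC$, which is precisely what the effectiveness of $\G$ makes possible. I expect this bookkeeping — rather than any new idea — to be the main obstacle.
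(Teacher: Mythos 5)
Your proof is correct and follows the paper's own route: reduce to the determinacy of Kastanas' game via Proposition \ref{mainaux}, whose proof needs only $ZF + DC$ once effectiveness supplies the two applications of axiom~2, and then derive that determinacy from $AD_\R$ by recoding $K_p$ as a game on reals. The only divergence is the recoding step: you fix an explicit injection of $P$ into $\R$ in $ZF$ and use the ``first illegal move loses'' convention, whereas the paper invokes the $AD$-consequence that $P$ is either countable or in bijection with $\R$; both are fine, and your variant has the mild advantage of not needing the perfect set property.
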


\begin{proof}

Recall that in $ZF + DC + AD$, every subset of a Polish space is either at most countable, or contains a Cantor set, and is thus in bijection with $\R$ (this is a consequence of Theorem 21.1 in \cite{kechris}, that can be proved in $ZF + DC$). So if $P$ is countable, then Kastanas' game can be viewed as a game on integers and is thus determined, and if $P$ is uncountable, then Kastanas' game can be viewed as a game on real numbers, that is also determined. The conclusion follows from Proposition \nolinebreak \ref{mainaux}.

\end{proof}

As above, we can't prove in this way that the same result holds under $AD$ instead of $AD_\R$, but we conjecture that it does so. As we will see in the next section, if this is true, this would imply that under $AD$, every subset of $\elinf$ is Ramsey, which is still conjectural today. More generally, it would be interesting to know whether, in general, the determinacy of $\Gamma$-subsets of $\omega^\omega$ is enough to prove the adversarial Ramsey property for $\Gamma$-sets in sufficiently regular Gowers spaces, for any suitable class $\Gamma$.

%\smallskip

%Since for sufficiently regular Gowers spaces (analytic ones, or effective ones with $P$ being subset of a Polish space, depending on the case), we only need the determinacy of $\Gamma$-subsets of $\R^\omega$ to prove the adversarial Ramsey property for $\Gamma$-sets, and since from this property in every sufficiently regular space, we can deduce the determinacy of $\Gamma$-subsets of $\omega^\omega$, another interesting question is the following:

%\begin{question}

%Where does the adversarial Ramsey property for $\Gamma$-sets in sufficiently regular Gowers spaces lie between the determinacy of $\Gamma$-subsets of $\omega^\omega$ and the determinacy of $\Gamma$-subsets of $\R^\omega$?

%\end{question}

%This question can be asked both in terms of implication and of consistency strength. In particular, we don't know whether there exists an analytic Gowers space $\G$ and a suitable class $\Gamma$ of subsets of Polish spaces such that $ZFC$ doesn't prove that the determinacy of $\Gamma$-subsets of $\omega^\omega$ implies the adversarial Ramsey property for $\Gamma$-sets in $\G$, neither if there exists some such that the consistency strength of $ZFC +$\emph{``Every $\Gamma$-set in $\G$ is adversarially Ramsey''} is strictly above the consistency strength of $ZFC +$\emph{``Every $\Gamma$-subset of $\omega^\omega$ is determined''}.

\bigskip

\section{Strategically Ramsey sets and the pigeonhole principle}\label{par3}

The aim of this section is to prove a version of Rosendal's Theorem \ref{RosendalGowersthm} in the general setting of Gowers spaces. We also introduce the notion of the \emph{pigeonhole principle} for a Gowers space and see that the latter result can be strengthened in the case where this principle holds. This will enable us to see the fundamental difference between the Mathias--Silver space and the Rosendal space over a field with at least three elements. We start by introducing Gowers' game and the asymptotic game in the setting of Gowers spaces, and the notion of a strategically Ramsey set. In this whole section, we fix a Gowers space $\G = (P, X, \leqslant, \leqslant^*, \vartriangleleft)$.

\begin{defin}\label{DefGowersGames}

Let $p \in P$.

\begin{enumerate}

\item \emph{Gowers' game below $p$}, denoted by $G_p$, is defined in the following way:

\smallskip

\begin{tabular}{ccccccc}
\textbf{I} & $p_0$ & & $p_1$ & & $\hdots$ & \\
\textbf{II} & & $x_0$ & & $x_1$ & & $\hdots$ 
\end{tabular}

\smallskip

\noindent where the $x_i$'s are elements of $X$, and the $p_i$'s are elements of $P$. The rules are the following:

\begin{itemize}

\item for \textbf{I}: for all $i \in \omega$, $p_i \leqslant p$;

\item for \textbf{II}: for all $i \in \omega$, $(x_0, \ldots, x_i) \vartriangleleft p_i$.

\end{itemize}

\noindent The outcome of the game is the sequence $(x_i)_{i \in \omega} \in X^\omega$.

\item The \emph{asymptotic game below $p$}, denoted by $F_p$, is defined in the same way as $G_p$, except that this time we moreover require that $p_i \lessapprox p$.

\end{enumerate}

\end{defin}

\begin{defin}

A set $\X \subseteq X^\omega$ is said to be \emph{strategically Ramsey} if for every $p \in P$, there exists $q \leqslant p$ such that either player \I{} has a strategy to reach $\X^c$ in $F_q$, or player \II{} has a strategy to reach $\X$ in $G_q$.

\end{defin}

The general version of Rosendal's Theorem \ref{RosendalGowersthm} is then the following:

\begin{thm}[Abstract Rosendal's theorem]\label{abstractRosendal}

Every analytic subset of $X^\omega$ is strategically Ramsey.

\end{thm}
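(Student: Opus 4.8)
The plan is to adapt the Kastanas-style approach already used for Proposition~\ref{mainaux}, but with a simpler auxiliary game. First I would define, for each $p \in P$, a one-sided ``Kastanas-type'' game $H_p$ in which player \I{} plays subspaces $p_i$ (subject to $p_i \leqslant p$, respectively $p_{i+1} \leqslant q_i$ for later rounds) and player \II{} responds with a point $x_i$ together with a subspace $q_i \leqslant p_i$ satisfying $(x_0,\dots,x_i) \vartriangleleft q_i$; the outcome is $(x_i)_{i\in\omega}$. The point of introducing $H_p$ is that it is a game with Borel rules on a Polish (or, in the analytic case, standard Borel) parameter space, so by Borel determinacy of games on reals (used exactly as in Corollary~\ref{AdvGamma}, via a Borel surjection $\varphi\colon\R\to P$ and the observation that $H_p$ is equivalent to a game on $\R$ with Borel rules) one of the two players has a winning strategy in $H_p$ to reach $\X$ (respectively $\X^c$). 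Then the core of the argument is the analogue of Proposition~\ref{mainaux}: if \I{} has a strategy in $H_p$ to reach $\X^c$, then there is $q \leqslant p$ such that \I{} has a strategy in $F_q$ to reach $\X^c$; and if \II{} has a strategy in $H_p$ to reach $\X$, then there is $q \leqslant p$ such that \II{} has a strategy in $G_q$ to reach $\X$. Since $\X$ analytic implies $\X$ is the first projection of a closed set, and since we may as well absorb that into the game (Martin's result on $\mathbf\Sigma^1_1$-determinacy is not needed here — analytic determinacy of games on reals is not provable in $ZFC$; instead the right move is to code the analytic set by a Borel, in fact closed, game with an extra real coordinate and invoke only Borel determinacy), we obtain the theorem. [The correct bookkeeping: an analytic $\X \subseteq X^\omega$ is the projection of a closed $C \subseteq X^\omega \times \omega^\omega$; incorporate a witness branch into the moves of the player aiming at $\X$, so determinacy of the resulting closed/Borel game on reals suffices.]

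The first key step is the pruning lemma, a one-sided analogue of Lemma~\ref{mainlem}: given an at most countable set $\St$ of ``states'' (partial plays of $H_p$ ending with a move of the relevant player in which that player has followed a fixed strategy) and a subspace $r$, produce $r^* \leqslant r$ with $r^* \leqslant^* r$-stability, so that whenever a state in $\St$ admits a legal continuation whose response subspace $v$ is compatible with $r^*$, it admits one with $r^* \leqslant^* v$. The proof is the same diagonalization over an enumeration of $\St \times X$ as in Lemma~\ref{mainlem}, using axioms 2 and 3 of a Gowers space. The second key step iterates this: one builds a $\leqslant$-decreasing sequence $(q_n)$ with $q_{n+1}$ obtained from the lemma applied to $q_n$ and a countable set $\St_n$ of rank-$n$ states, takes a $\leqslant^*$-lower bound $q$ (with $q \leqslant q_0 \leqslant p$), and then shows $q$ works by simulation: a play of $F_q$ (where \I{} must play $p_i \lessapprox q$) is mirrored by a play of $H_p$ in which \II{} follows her strategy, maintaining the invariants ``$\st'_{\restriction n} \in \St_n$'' and a domination condition between the subspaces in the two games. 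The liberalization ``$p_i \leqslant p_i$'' versus ``$p_i \lessapprox q$'' is exactly what makes $F$ (asymptotic game) appear rather than $G$ on the \I{}-side, mirroring the $A_p$/$B_p$ asymmetry in Section~\ref{par2}. The \II{}-side simulation (obtaining a strategy in $G_q$) is symmetric and in fact slightly easier since \II{} in $G_q$ plays only points.

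The main obstacle, as in the adversarial case, is getting the simulation bookkeeping exactly right: one must check that the ``fictive'' move of the strategy $\tau$ in $H_p$ — played against a subspace $u''_n$ chosen to be $\lessapprox$ the move actually forced in $F_q$ and simultaneously $\leqslant$ the previous real $H_p$-subspace — has its response compatible with $q_{n+1}$, so that the relevant pair lies in the set $\Ast[\st_{\restriction n}]$ and the pre-chosen continuation in $\St_{n+1}$ is available; and then that the resulting move can be pulled back to a legal \I{}-move in $F_q$ respecting $\lessapprox q$. This is where axioms 1, 2, 4, 5 all get used, and where one has to be careful that the $\vartriangleleft$-constraints propagate correctly along $\leqslant$ (axiom 5). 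I expect no genuinely new idea beyond Section~\ref{par2} is needed; the proof is a streamlined, one-sided version of the proof of Proposition~\ref{mainaux}, and the only real subtlety is the reduction of the analytic case to Borel determinacy of a game on reals rather than to analytic determinacy.
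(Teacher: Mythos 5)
Your overall architecture --- a one-sided Kastanas-type game $H_p$, a pruning lemma analogous to Lemma~\ref{mainlem}, and a state-by-state simulation transferring a strategy of \I{} in $H_p$ to a strategy of \I{} in some $F_q$ (and of \II{} to a strategy in $G_q$) --- is sound, and for Borel sets it would work; the paper instead gets the Borel case for free from the adversarial principle (Theorem~\ref{main}) by passing to a doubled Gowers space $\widetilde{\G}$ in which only every other coordinate matters, so no new combinatorics is needed there. The genuine gap is in your analytic case. You unfold $\X$ as the projection of a closed $C \subseteq X^\omega \times \omega^\omega$ and let the player aiming at $\X$ (that is, \II{}) also play the witness branch, invoking only Borel determinacy of the unfolded game. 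If \II{} wins the unfolded game, projecting out the witness indeed gives her a strategy in $H_p$ to reach $\X$, and your transfer applies. But if \I{} wins the unfolded game, his strategy only guarantees that the outcome together with the \emph{particular witness played by \II{}} avoids $C$; this does not place the outcome in $\X^c$, which requires defeating \emph{every} witness. Your transfer machinery, run in the enlarged space, only yields a strategy for \I{} in an unfolded asymptotic game avoiding $C$, not a strategy in $F_q$ reaching $\X^c$, and nothing in your plan bridges this. The paper bridges it with Fact~\ref{Fact34}: \I{}'s strategy in the unfolded asymptotic game is made witness-independent by replacing, at each turn, the finitely many subspaces $\tau'(x_0,\varepsilon_0,\ldots,x_{n-1},\varepsilon_{n-1})$, for $(\varepsilon_0,\ldots,\varepsilon_{n-1}) \in \{0,1\}^n$, all $\lessapprox q$, by a common refinement still $\lessapprox q$ (axiom 2 iterated finitely often); then one single outcome of $F_q$ is a legal $\tau'$-outcome for \emph{all} witness sequences at once, hence lies in $\X^c$. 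This is exactly where your coding fails: with witnesses ranging over $\omega^\omega$, the family of subspaces to be amalgamated at each turn is infinite, and the Gowers-space axioms provide no common $\lessapprox q$ (or even $\leqslant$) refinement of infinitely many $\lessapprox q$ subspaces --- already in the Mathias--Silver space infinitely many cofinite subsets of an infinite set can have empty intersection. The fix is to unfold with a compact, finitely branching witness space: write $\X$ as the projection of a $G_\delta$ subset of $(X\times\{0,1\})^\omega$, incorporate the bits into the points of an auxiliary Gowers space, and prove the analogue of Fact~\ref{Fact34}; that missing step is the real content of the analytic case.

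A secondary point: your determinacy step via a Borel surjection $\varphi : \R \longrightarrow P$ only makes sense when the Gowers space is analytic, whereas the theorem is stated for an arbitrary Gowers space, where $P$ is just a set. For the general statement you need Borel determinacy for games on arbitrary sets (as the paper uses for Kastanas' game in Theorem~\ref{main}), or you should derive the Borel case abstractly, as the paper does, before unfolding.
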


Observe that Theorem \ref{RosendalGowersthm} is exactly the result of the application of Theorem \ref{abstractRosendal} to the Rosendal space.

\begin{proof}

We prove the result for Borel sets first. In order to do this, consider another space $\widetilde{\G} = (P, X, \leqslant, \leqslant^*, \widetilde{\vartriangleleft})$, where $P$, $X$, $\leqslant$, and $\leqslant^*$ are the same as in $\G$, but we replace $\vartriangleleft$ by the relation $\widetilde{\vartriangleleft}$ defined by $(x_0, y_0, x_1, y_1, \ldots, x_n, y_n) \, {\widetilde{\vartriangleleft}} \, p$ iff $(y_0, y_1, \ldots, y_n) \vartriangleleft p$, and $(x_0, y_0, x_1, y_1, \ldots, x_n) \, {\widetilde{\vartriangleleft}} \, p$ iff $(x_0, x_1, \ldots, x_n) \vartriangleleft p$. Now, to each set $\X \subseteq X^\omega$, associate a set $\widetilde{\X} \subseteq X^\omega$ defined by $(x_0, y_0, x_1, y_1, \ldots) \in \widetilde{\X} \Leftrightarrow (y_0, y_1, \ldots) \in \X$. Then, when players try to reach $\widetilde{\X}$ or $\widetilde{\X}^c$ in the games $A_p$ and $B_p$ of $\widetilde{\G}$, the $p_i$'s played by \II{} and the $x_i$'s played by \I{} don't matter at all; so a strategy for \I{} in the game $A_p$ of $\widetilde{\G}$ to reach $\widetilde{\X}^c$ becomes a strategy for \I{} in the game $F_p$ of $\G$ to reach $\X^c$, and a strategy for \II{} in the game $B_p$ of $\widetilde{\G}$ to reach $\widetilde{\X}$ becomes a strategy for \II{} in the game $G_p$ of $\G$ to reach $\X$. Thus, the strategical Ramsey property for $\X$ in $\G$ is equivalent to the adversarial Ramsey property for $\widetilde{\X}^c$ in $\widetilde{\G}$, so the strategical Ramsey property for Borel sets in $\G$ follows from Theorem \ref{main}.

\smallskip

From the result for Borel sets, we now deduce the result for arbitrary analytic sets using an unfolding argument (see \cite{kechris}, section 21, for a general presentation of the method of unfolding). Let $\X \subseteq X^\omega$ be analytic, and $p \in P$. Let $X' = X \times \nolinebreak \{0, 1\}$, whose elements will be denoted by the letters $(x, \varepsilon)$. Define the binary relation $\vartriangleleft' \subseteq \Seq(X') \times P$ by $(x_0, \varepsilon_0, \ldots, x_n, \varepsilon_n) \vartriangleleft' p$ if $(x_0, \ldots, x_n) \vartriangleleft p$, and consider the Gowers space $\G' = (P, X', \leqslant, \leqslant^*, \vartriangleleft')$. In this proof, we will use the notations $F_q$ and $G_q$ to denote respectively the asymptotic game and Gowers' game in the space $\G$, whereas the notations $F'_q$ and $G'_q$ will be used for these games in the space $\G'$. We denote by $\pi$ the projection $X'^\omega \longrightarrow X^\omega$. Let $\X' \subseteq X'^\omega$ be a $G_\delta$ set such that $\X = \pi(\X')$. Since $\X'$ is $\G_\delta$, it is strategically Ramsey by the first part of this proof; let $q \leqslant p$ witnessing so. If player \II{} has a strategy in $G'_q$ to reach $\X'$, then a run of the game $G_q$ where \II{} uses this strategy but omits to display the $\varepsilon_i$'s produces an outcome lying in $\X$; hence, \II{} has a strategy to reach $\X$ in $G_q$. Then, our result will follow from the following fact:

\begin{fact}\label{Fact34}

If \I{} has a strategy to reach $\X'^c$ in $F'_q$, then he has a strategy to reach $\X^c$ in $F_q$.

\end{fact}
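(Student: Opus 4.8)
The plan is to have player \I{} in the asymptotic game $F_q$ of $\G$ run, in parallel, one \emph{virtual} run of the asymptotic game $F'_q$ of $\G'$ for every finite binary string $t$; this is possible because $\G'$ has the same subspace part $(P,\leqslant,\leqslant^*)$ as $\G$, so all of \I{}'s moves live in $P$. In the virtual run indexed by $t$, \I{} follows the hypothesised strategy $\sigma'$ (the given strategy of \I{} in $F'_q$ to reach $\X'^c$) while \II{} plays exactly the points $x_0, x_1, \ldots$ that \II{} plays in $F_q$, the point $x_i$ being tagged with the bit $t(i)$. The only thing \I{} has to do in $F_q$ is, at each turn, to merge the finitely many moves that $\sigma'$ dictates in these virtual runs into a single legal move.

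First I would record a consequence of the Gowers space axioms that makes this merging possible: if $r_1, \ldots, r_k \in P$ all satisfy $r_j \lessapprox q$, then there is $r \in P$ with $r \leqslant r_j$ for every $j$ and $r \lessapprox q$. This is proved by induction on $k$. For the step, given such an $r'$ for $r_1, \ldots, r_k$: from $r' \lessapprox q$ and $r_{k+1} \lessapprox q$ one gets $r_{k+1} \leqslant q \leqslant^* r'$, so axiom 2 produces $r \leqslant r'$, $r \leqslant r_{k+1}$ with $r_{k+1} \leqslant^* r$; then $r \leqslant r_{k+1} \leqslant q$ and $q \leqslant^* r_{k+1} \leqslant^* r$, i.e. $r \lessapprox q$, while $r \leqslant r' \leqslant r_j$ for $j \leqslant k$.

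Then I would define \I{}'s strategy in $F_q$ by maintaining the following data after each of \I{}'s turns (say after \I{}'s $(n+1)$-th move, so \II{} has played $x_0, \ldots, x_{n-1}$ in $F_q$): \I{}'s move $p_n \in P$ with $p_n \lessapprox q$, and, for each $t \in \{0,1\}^n$, a legal partial run of $F'_q$ in which \I{} has followed $\sigma'$, \II{} has played $(x_0, t(0)), \ldots, (x_{n-1}, t(n-1))$, and \I{}'s last move is the corresponding $\sigma'$-value, call it $p_n^t$ (so $p_n^t \lessapprox q$), with moreover $p_n \leqslant p_n^t$ for every such $t$. One starts with $p_0 := p_0^\varnothing := \sigma'(\varnothing) \lessapprox q$, the single virtual run being \I{}'s opening move in $F'_q$. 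For the inductive step: when \II{} answers \I{}'s move $p_n$ in $F_q$ by some $x_n$ with $(x_0, \ldots, x_n) \vartriangleleft p_n$, then for each $t \in \{0,1\}^n$ axiom 5 (applied to $p_n \leqslant p_n^t$) gives $(x_0, \ldots, x_n) \vartriangleleft p_n^t$, so $(x_n, \varepsilon)$ is a legal \II{}-move in the run indexed by $t$ for each $\varepsilon \in \{0,1\}$; extend that run by $(x_n, \varepsilon)$ and then by \I{}'s $\sigma'$-answer $p_{n+1}^{t\concat\varepsilon}$ (which is $\lessapprox q$), obtaining the run indexed by $t\concat\varepsilon$. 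Applying the lemma above to the finitely many $p_{n+1}^s$ ($s \in \{0,1\}^{n+1}$) yields \I{}'s next move $p_{n+1} \lessapprox q$ with $p_{n+1} \leqslant p_{n+1}^s$ for all such $s$, which restores the invariant.

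Finally I would check that this strategy reaches $\X^c$. Let $(x_i)_{i\in\omega}$ be the outcome of a run of $F_q$ following it. For any $t \in 2^\omega$, the virtual runs indexed by the initial segments of $t$ are nested, and their union is a total run of $F'_q$ in which \I{} follows $\sigma'$ and \II{} plays $((x_i, t(i)))_{i\in\omega}$; since $\sigma'$ reaches $\X'^c$, this outcome is not in $\X'$. As $t$ was arbitrary, nothing in $X'^\omega$ projecting to $(x_i)_{i\in\omega}$ belongs to $\X'$, i.e. $(x_i)_{i\in\omega} \notin \pi(\X') = \X$, as desired. The delicate point throughout is the merging step: one needs a common $\leqslant$-lower bound — not merely a $\leqslant^*$-lower bound — of finitely many subspaces, kept $\lessapprox q$, so that axiom 5 keeps every virtual run legal at \II{}'s following move. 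This is precisely why $\G'$ is built with the two-element tag set $\{0,1\}$: at each stage only finitely many virtual runs are alive, so axiom 2 suffices to merge them, whereas an $\omega$-valued tag would keep infinitely many branches and the $\leqslant^*$-lower bounds furnished by axiom 3 would not propagate $\leqslant$.
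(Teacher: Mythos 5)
Your proof is correct and takes essentially the same approach as the paper's: there, \I{}'s move after $(x_0,\ldots,x_{n-1})$ is likewise defined as a common $\leqslant$-lower bound, kept $\lessapprox q$, of the $2^n$ values $\tau'(x_0,\varepsilon_0,\ldots,x_{n-1},\varepsilon_{n-1})$, obtained by iterating axiom 2, after which one checks that every tagged sequence $(x_i,\varepsilon_i)_{i\in\omega}$ is the outcome of a legal $\tau'$-run of $F'_q$. Your explicit merging lemma and the bookkeeping via virtual runs indexed by binary strings are just a more detailed write-up of that same argument.
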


\begin{proof}

Let $\tau'$ be a strategy enabling \I{} to reach $\X'^c$ in $F'_q$. In order to save notation, in this proof, we consider that in the games $F'_q$ and $F_q$, player \II{} is allowed not to respect the rules (i.e. to play $x_i$'s such that $(x_0, \ldots, x_i) \ntriangleleft p_i$), but loses the game if she does. Then, the strategy $\tau'$ can be viewed as a mapping $X'^{< \omega} \longrightarrow P$ such that for every $(x_0, \varepsilon_0, \ldots, x_{n - 1}, \varepsilon_{n - 1}) \in X'^{< \omega}$, we have $\tau'(x_0, \varepsilon_0, \ldots, x_{n - 1}, \varepsilon_{n - 1}) \lessapprox q$. Observe that if $(p_j)_{j \in J}$ is a finite family of elements of $P$ such that $\forall j \in J, \; p_j \lessapprox q$, then by applying iteratively the property 2. in the definition of a Gowers space, we can get $p^* \in P$ such that $p^* \lessapprox q$ and $\forall j \in J \; p^* \leqslant p_j$. Thus, for every $(x_0, \ldots, x_{n - 1}) \in X^{< \omega}$, we can choose $\tau(x_0, \ldots, x_{n - 1}) \in P$ such that $\tau(x_0, \ldots, x_{n - 1}) \lessapprox q$ and such that for every $(\varepsilon_0, \ldots, \varepsilon_{n - 1}) \in \{0, 1\}^n$, we have $\tau(x_0, \ldots, x_{n - 1}) \leqslant \tau'(x_0, \varepsilon_0 \ldots, x_{n - 1}, \varepsilon_{n - 1})$. We have hence defined a mapping $\tau : X^{< \omega} \longrightarrow P$; we show that this is a strategy for \I{} in $F_q$ enabling him to reach $\X^c$.

\smallskip

Consider a run of the game $F_q$ during which \II{} respects the rules and \I{} plays according to his strategy $\tau$:

\smallskip

\begin{tabular}{ccccccc}
\textbf{I} & $p_0$ & & $p_1$ & & $\hdots$ & \\
\textbf{II} & & $x_0$ & & $x_1$ & & $\hdots$ 
\end{tabular}

\smallskip

\noindent We have to show that $(x_i)_{i \in \omega} \notin \X$, that is, for every $(\varepsilon_i)_{i \in \omega} \in \{0, 1\}^\omega$, $(x_i, \varepsilon_i)_{i \in \omega} \notin \X'$. Let $(\varepsilon_i)_{i \in \omega} \in \{0, 1\}^\omega$; it is enough to show that $(x_i, \varepsilon_i)_{i \in \omega}$ is the outcome of a run of the game $F'_q$ during which \I{} always follows his strategy $\tau'$ and \II{} always respects the rules. Letting $p'_i$ = $\tau'(x_0, \varepsilon_0, \ldots, x_{n - 1}, \varepsilon_{n - 1})$, this means that during the following run of the game $F'_q$, player \II{} always respects the rules:

\smallskip

\begin{tabular}{ccccccc}
\textbf{I} & $p'_0$ & & $p'_1$ & & $\hdots$ & \\
\textbf{II} & & $x_0, \varepsilon_0$ & & $x_1, \varepsilon_1$ & & $\hdots$ 
\end{tabular}

\smallskip

\noindent But for every $i \in \omega$, we have that $p_i = \tau(x_0, \ldots, x_{n - 1})$ and $p'_i = \tau'(x_0, \varepsilon_0, \ldots, x_{n - 1}, \varepsilon_{n - 1})$, so by definition of $\tau$, we have $p_i \leqslant p'_i$. Since player \II{} respects the rules in $F_q$, we have that $(x_0, \ldots, x_i) \vartriangleleft p_i$, so $(x_0, \varepsilon_0, \ldots, x_i, \varepsilon_i) \vartriangleleft p'_i$, and \II{} also respects the rules in $F'_q$. This concludes the proof.

\end{proof}

\end{proof}

Observe that in the proof of Theorem \ref{abstractRosendal}, we only need Theorem \ref{main} for $G_\delta$ sets, and hence determinacy for $G_\delta$ games. Hence, unlike Theorem \ref{main} in its generality, the last result is provable in $ZC$. Actually, as previously, for effective Gowers spaces, it is even provable in $Z + DC$.

\smallskip

Again, we actually proved a little more. For a suitable class $\Gamma$ of subsets of Polish spaces, let $\exists\Gamma$ be the class of projections of $\Gamma$-sets; in other words, for $A$ a subset of a Polish space $Y$, we have $A \in \exists\Gamma$ if and only if there exist $B \in Y \times 2^\omega$ such that $B \in \Gamma$ and $A$ is the first projection of $B$ (we could have taken any uncountable Polish space instead of $2^\omega$ in this definition, since $\Gamma$ is closed under Borel inverse images). Then the proof of Theorem \ref{abstractRosendal}, combined with Corollaries \ref{AdvGamma} and \ref{AdvAD}, actually shows the following:

\begin{cor}\label{StratGamma}

\alaligne

\begin{enumerate}

\item Let $\Gamma$ be a suitable class of subsets of Polish spaces. If every $\Gamma$-subset of $\R^\omega$ is determined, then for an analytic Gowers space $\G = (P, X, \leqslant, \leqslant^*, \vartriangleleft)$, every $\exists\Gamma$-subset of $X^\omega$ is strategically Ramsey.

\item $(ZF + DC + AD_\R)$ Let $\G = (P, X, \leqslant, \leqslant^*, \vartriangleleft)$ be an effective Gowers space such that $P$ is a subset of a Polish space. Then every subset of $X^\omega$ is strategically Ramsey.

\end{enumerate}

\end{cor}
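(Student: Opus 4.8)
The plan is to re-run the proof of Theorem~\ref{abstractRosendal}, replacing its two inputs --- ``Theorem~\ref{main}'' and ``unfolding from $G_\delta$ sets'' --- by Corollary~\ref{AdvGamma} (resp.\ Corollary~\ref{AdvAD}) and ``unfolding from $\Gamma$-sets''. Throughout I will use the auxiliary space $\widetilde{\G} = (P, X, \leqslant, \leqslant^*, \widetilde{\vartriangleleft})$ and, for $\A \subseteq X^\omega$, the set $\widetilde{\A}$ introduced in that proof, together with the equivalence established there: \emph{$\A$ is strategically Ramsey in $\G$ if and only if $\widetilde{\A}^c$ is adversarially Ramsey in $\widetilde{\G}$}; this equivalence only unwinds the definitions and uses no choice beyond $DC$. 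Two preliminary observations serve Part~1. First, if $\Gamma$ is suitable then so is its dual class $\check{\Gamma} = \{A^c \mid A \in \Gamma\}$ (it contains the Borel sets, and its closure under finite unions, finite intersections and Borel preimages follows by De~Morgan from that of $\Gamma$). Second, the Gale--Stewart game with payoff $A \subseteq \R^\omega$ is determined if and only if the one with payoff $A^c$ is --- prefix a dummy move and interchange the players; since the shift map is continuous and $\Gamma$ is closed under Borel preimages, this is legitimate --- so the hypothesis that every $\Gamma$-subset of $\R^\omega$ is determined yields that every $\check{\Gamma}$-subset of $\R^\omega$ is determined.

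\textbf{Part 1, the $\Gamma$ case.} If $\G$ is an analytic Gowers space, then so is $\widetilde{\G}$: the relation $\widetilde{\vartriangleleft}$, obtained from the Borel relation $\vartriangleleft$ by precomposing with a continuous coordinate projection on $\Seq(X) \times P$, is Borel. Let $\A \subseteq X^\omega$ be in $\Gamma$. Then $\widetilde{\A}$, being a Borel preimage of $\A$, is again in $\Gamma$, so $\widetilde{\A}^c$ is in $\check{\Gamma}$; applying Corollary~\ref{AdvGamma} to the suitable class $\check{\Gamma}$ and to the analytic Gowers space $\widetilde{\G}$ gives that $\widetilde{\A}^c$ is adversarially Ramsey in $\widetilde{\G}$. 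By the equivalence recalled above, $\A$ is strategically Ramsey in $\G$. Hence every $\Gamma$-subset of $X^\omega$ is strategically Ramsey.

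\textbf{Part 1, unfolding to $\exists\Gamma$.} This step copies the unfolding of Theorem~\ref{abstractRosendal}. Let $\X \subseteq X^\omega$ be in $\exists\Gamma$ and fix $p \in P$. Put $X' = X \times \{0,1\}$, form the analytic Gowers space $\G' = (P, X', \leqslant, \leqslant^*, \vartriangleleft')$ as there, write $F'_q$, $G'_q$ for its asymptotic and Gowers' games, and let $\pi : X'^\omega \to X^\omega$ be the projection. Via the homeomorphism $X'^\omega \cong X^\omega \times 2^\omega$ we may pick $\X' \subseteq X'^\omega$ in $\Gamma$ with $\X = \pi(\X')$. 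By the previous paragraph $\X'$ is strategically Ramsey in $\G'$, so there is $q \leqslant p$ such that either \II{} has a strategy in $G'_q$ to reach $\X'$, or \I{} has a strategy in $F'_q$ to reach $\X'^c$. In the first case, following that strategy in $G_q$ while dropping the second coordinates produces an outcome in $\pi(\X') = \X$, so \II{} has a strategy in $G_q$ to reach $\X$. In the second case, Fact~\ref{Fact34}, whose proof assumes nothing about $\X'$, gives \I{} a strategy in $F_q$ to reach $\X^c$. Either way $\X$ is strategically Ramsey.

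\textbf{Part 2, and where the work is.} For Part~2 we work in $ZF + DC + AD_\R$. Since $\widetilde{\G}$ has the same $P$, $\leqslant$ and $\leqslant^*$ as $\G$, and axiom~2 in the definition of a Gowers space --- hence the witnessing function $f$ of effectiveness --- involves only these, $\widetilde{\G}$ is effective whenever $\G$ is, and its set of subspaces is the same subset of a Polish space. Corollary~\ref{AdvAD} thus applies to $\widetilde{\G}$ and shows that every subset of $X^\omega$ is adversarially Ramsey in $\widetilde{\G}$, whence, by the basic equivalence, every subset of $X^\omega$ is strategically Ramsey in $\G$ (no unfolding is needed, as Corollary~\ref{AdvAD} already covers all subsets). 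The content is entirely in Theorem~\ref{abstractRosendal} and Corollaries~\ref{AdvGamma} and~\ref{AdvAD}, so there is no real obstacle; the only delicate point is the pointclass bookkeeping in Part~1: the tilde-translation turns a statement about $\X$ into one about its complement $\widetilde{\X}^c$, so one must pass to the dual class $\check{\Gamma}$, check that it is still suitable, transfer $\Gamma$-determinacy on $\R^\omega$ to $\check{\Gamma}$-determinacy, and verify that the auxiliary spaces $\widetilde{\G}$ and $\G'$ inherit analyticity, respectively effectiveness.
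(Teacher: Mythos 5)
Your proposal is correct and follows exactly the route the paper intends: Corollary \ref{StratGamma} is stated there with only the remark that it follows from the proof of Theorem \ref{abstractRosendal} combined with Corollaries \ref{AdvGamma} and \ref{AdvAD}, and your write-up is a faithful execution of that remark. Your extra care with the dual class $\check{\Gamma}$ (needed because the tilde-translation sends $\X$ to $\widetilde{\X}^c$, and the adversarial Ramsey property is not self-dual) is a detail the paper leaves implicit, and you handle it correctly.
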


In particular, if there exists a measurable cardinal, then in an analytic Gowers space, every $\mathbf{\Sigma}_2^1$-set is strategically Ramsey.

\smallskip

The rest of this section aims to explain how we can, in certain cases, get symmetrical Ramsey results like Mathias--Silver's theorem from Theorem \ref{abstractRosendal}, which is asymmetrical. By \emph{asymmetrical}, we mean here that unlike Mathias--Silver's theorem, in Theorem \ref{abstractRosendal}, both possible conclusion don't have the same form. Actually, one of these conclusions is stronger than the other (and, as it will turn out later, \emph{strictly} stronger in general), as is shown by the following lemma.

\begin{lem}\label{ImplGames}

Let $\X \subseteq X^\omega$ and $p \in P$. Suppose that \I{} has a strategy in $F_p$ to reach $\X$. Then \II{} has a strategy in $G_p$ to reach $\X$.

\end{lem}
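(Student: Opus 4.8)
The plan is to let \II{} win $G_p$ by secretly simulating, on the side, a run of the asymptotic game $F_p$ in which \emph{she} plays \I{}'s role using the hypothesised strategy, feeding the points she is forced to play in $G_p$ into that simulation as \II{}'s moves in $F_p$. Since the outcome of a game is by definition the sequence of \II{}'s moves, the outcome of the actual run of $G_p$ will coincide with the outcome of a run of $F_p$ in which \I{} followed his strategy, hence will land in $\X$.

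Concretely, I would fix a strategy $\sigma$ for \I{} in $F_p$ reaching $\X$ and describe \II{}'s strategy in $G_p$ by induction on the turns. Suppose that at the start of turn $i$, \II{} has already committed to points $x_0, \dots, x_{i-1} \in X$ that are legal both as \II{}'s moves so far in the run of $G_p$ under consideration and as \II{}'s moves so far in a simulated run of $F_p$ in which \I{} plays according to $\sigma$. Let $p_i^{F} = \sigma(x_0, \dots, x_{i-1})$ be \I{}'s next move in $F_p$ dictated by $\sigma$, so that $p_i^{F} \lessapprox p$, and let $p_i^{G} \leqslant p$ be the move \I{} now makes in $G_p$. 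From $p_i^{G} \leqslant p$ and $p \leqslant^* p_i^{F}$ one gets $p_i^{G} \leqslant^* p_i^{F}$ (via axiom 1 and transitivity of $\leqslant^*$), so by axiom 2 there is $r_i \in P$ with $r_i \leqslant p_i^{G}$ and $r_i \leqslant p_i^{F}$; then by axiom 4 applied to $r_i$ and the sequence $(x_0, \dots, x_{i-1})$ (empty when $i = 0$), choose $x_i \in X$ with $(x_0, \dots, x_i) \vartriangleleft r_i$, and let \II{} play this $x_i$ in $G_p$. By axiom 5 we obtain $(x_0, \dots, x_i) \vartriangleleft p_i^{G}$ and $(x_0, \dots, x_i) \vartriangleleft p_i^{F}$, so $x_i$ is a legal move in $G_p$ and also a legal move for \II{} in the simulated $F_p$, and the induction goes through.

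Passing to the limit, the outcome $(x_i)_{i \in \omega}$ of the run of $G_p$ is exactly the outcome of a completed run of $F_p$ in which \I{} obeyed $\sigma$ and all the rules were respected; hence $(x_i)_{i \in \omega} \in \X$, and the recipe just described is a strategy for \II{} in $G_p$ to reach $\X$, as wanted.

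I do not expect a genuine obstacle, since the argument is short. The one delicate point — which is really the whole content of the statement — is the compatibility step: one needs $p_i^{F}$ and $p_i^{G}$ to have a common lower bound. This works precisely because $\sigma$ is a strategy in the \emph{asymptotic} game, which forces $p \leqslant^* p_i^{F}$ and thereby makes $p_i^{F}$ compatible with the arbitrary subspace $p_i^{G} \leqslant p$ that \I{} is allowed to play in $G_p$; if $\sigma$ were merely a strategy in $G_p$, these two subspaces would in general be incompatible and the simulation would break down, which is consistent with the remark preceding the lemma that the asymptotic-game conclusion is strictly stronger in general.
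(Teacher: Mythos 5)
Your proposal is correct and is essentially the paper's own argument: fix the strategy for \I{} in $F_p$, simulate a parallel run of $F_p$ alongside the actual run of $G_p$, use axiom 2 (after observing $q_n \leqslant^* p_n$ via axiom 1 and transitivity) to get a common lower bound $r_n$ of the two subspaces, and pick $x_n$ with $(x_0,\ldots,x_n) \vartriangleleft r_n$ by axiom 4, so that by axiom 5 the move is legal in both games and the common outcome lies in $\X$. The only difference is that you spell out a couple of steps the paper leaves implicit.
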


\begin{proof}

Fix $\tau$ a strategy enabling \I{} to reach $\X$ in $F_p$. We describe a strategy for \II{} in $G_p$ by simulating a play $(q_0, x_0, q_1, x_1, \ldots)$ of $G_p$ by a play $(p_0, x_0, p_1, x_1, \ldots)$ of $F_p$ having the same outcome and during which \I{} always plays according to $\tau$; this will ensure that $(x_0, x_1, \ldots) \in \X$ and that this play of $G_p$ will be winning for \II{}.

\smallskip

Suppose that the first $n$ turns of both games have been played, which means that the $p_i$'s, the $q_i$'s and the $x_i$'s have been choosen for every $i < n$. For the next turn, in $G_p$, player \I{} plays $q_n \leqslant p$, and in $F_p$, the strategy $\tau$ tells \I{} to play $p_n \lessapprox p$. Then $q_n \leqslant^* p_n$, so by axiom 2. in the definition of a Gowers space, there exists $r_n \in P$ such that $r_n \leqslant p_n$ and $r_n \leqslant q_n$. Let $x_n \in X$ such that $(x_0, \ldots, x_n) \vartriangleleft r_n$ (existing by axiom 4.). Then $x_n$ can be legally played by \II{} in both $F_p$ and $G_p$, what concludes the proof.

\end{proof}

Actually, the fact that \I{} has a strategy in $F_p$ to reach some set $\X$ is in general much stronger than the fact, for \II{}, to have a strategy in $G_p$ to reach the same set, and the first statement is in fact very close to a ``genuine'' Ramsey statement. By a ``genuine'' Ramsey statement, we mean a non-game-theoretical statement of the form ``every sequence $(x_n)_{n \in \omega}$ such that $\forall n \in \omega \; (x_0, \ldots, x_n) \vartriangleleft p$, and moreover satisfiying some structural condition, belongs to $\X$'', like in the conclusions of Mathias--Silver's theorem. In the case of the Mathias--Silver space, the link between the existence of a strategy for \I{} in the asymptotic game and a genuine Ramsey statement is given by the following lemma:

\begin{lem}\label{asympexact}

Work in the Mathias--Silver space, and let $\X \subseteq \omega^\omega$. Suppose that, for some $M \in \elinf$, player \I{} has a strategy in $F_M$ to reach $\X$. Then there exists an infinite $N \subseteq M$ such that every infinite $S\subseteq N$ belongs to  $\X$ (here, we identify infinite subsets of $\omega$ with increasing sequences of integers).

\end{lem}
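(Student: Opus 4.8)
The plan is to run a fusion (diagonalisation) argument, exploiting the fact that in the Mathias--Silver space the only thing \I{} can do through a move $p_i \lessapprox M$ is to delete finitely many integers from $M$, and that he must commit to this finite set knowing only \II{}'s finitely many previous moves. So if we build $N \subseteq M$ whose elements grow fast enough, any increasing sequence drawn from $N$ will dodge all the finite sets \I{} could possibly delete, hence will be a legal run of $F_M$ against \I{}'s strategy, hence will land in $\X$.

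Concretely, I would first fix a strategy $\tau$ for \I{} in $F_M$ reaching $\X$ and, extending it arbitrarily on non-legal positions, regard it as a map $\omega^{<\omega} \to \elinf$ with $\tau(t) \lessapprox M$ for every $t$, i.e.\ $\tau(t)$ a cofinite subset of $M$. For each finite sequence $t$ put $\beta(t) = \max\bigl(M \setminus \tau(t)\bigr) + 1$ (and $\beta(t) = 0$ if $M \setminus \tau(t) = \varnothing$), so that every $m \in M$ with $m \geqslant \beta(t)$ lies in $\tau(t)$. Then I would build $N = \{n_0 < n_1 < \cdots\} \subseteq M$ recursively: having chosen $n_0 < \cdots < n_{k-1}$ (the case $k = 0$ included, where nothing has been chosen), pick $n_k \in M$ with $n_k > n_{k-1}$ and $n_k \geqslant \beta(t)$ for every increasing sequence $t$ whose terms lie in $\{n_0, \dots, n_{k-1}\}$; there are only finitely many such $t$ and $M$ is infinite, so this is possible. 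The resulting $N$ is infinite and contained in $M$.

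To finish, given an infinite $S \subseteq N$ with increasing enumeration $(s_0 < s_1 < \cdots)$, I would verify that the run of $F_M$ in which \I{} follows $\tau$ and \II{} plays $s_0, s_1, \dots$ is legal for both players; since $\tau$ forces the outcome into $\X$, this gives $S = (s_i)_{i \in \omega} \in \X$, as desired. Legality for \I{} is immediate from the definition of $\tau$. Legality for \II{} is proved by induction on $i$: if $(s_0, \dots, s_{i-1})$ is a legal position, \I{}'s $i$-th move is $p_i = \tau(s_0, \dots, s_{i-1})$, and writing $s_i = n_m$, all terms of $(s_0, \dots, s_{i-1})$ lie in $\{n_0, \dots, n_{m-1}\}$ (vacuously when $i = 0$), so by construction $n_m \geqslant \beta(s_0, \dots, s_{i-1})$, whence $s_i = n_m \in \tau(s_0, \dots, s_{i-1}) = p_i$, which is exactly $(s_0, \dots, s_i) \vartriangleleft p_i$.

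I do not expect any serious obstacle; the only point that needs care is the apparent circularity between ``$\tau$ is only ever evaluated at legal positions'' and ``$N$ was built precisely to make those positions legal''. I sidestep it by extending $\tau$ to all of $\omega^{<\omega}$ at the outset, so that $\beta(t)$ is unconditionally defined during the construction, and by running the legality check as an induction that proceeds forward along the play, using at each step only the finitely many proper initial segments already known to be legal. (One could instead, during the construction, restrict attention to those $t$ that are legal partial plays of \II{} against $\tau$, but extending $\tau$ is cleaner.)
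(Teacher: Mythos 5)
Your proposal is correct and is essentially the paper's own argument: fix \I{}'s strategy, view it as a map from finite sequences to cofinite subsets of $M$, and diagonalize to build $N$ so fast-growing that every increasing sequence from $N$ stays inside all the relevant values of the strategy, making it a legal run and hence an element of $\X$. The only cosmetic differences are that the paper normalizes $M = \omega$ and replaces the cofinite sets by final segments (so your $\beta(t)$ becomes $\min\tau(t)$), and at each stage only guards against sequences ending in the last chosen element rather than all sequences from the chosen elements; both variants work for the same reason.
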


Obviously, a weak converse of this lemma holds: if every infinite $S \subseteq M$ belongs to $\X$, then \I{} has a strategy in $F_M$ to reach $\X$. Indeed, he can always ensure that the outcome of this game is an increasing sequence.

\begin{proof}[Proof of Lemma \ref{asympexact}]

Without loss of generality, assume $M = \omega$. As in the proof of Fact \ref{Fact34}, consider that in $F_\omega$, player \II{} is allowed to play against the rules, but loses if she does. Let $\tau$ be a strategy for player \I{} in $F_\omega$, enabling him to reach $\X$; in this context, this strategy can be viewed as a mapping associating to each finite sequence of integers a cofinite subset of $\omega$. Without loss of generality, we can assume that these cofinite subsets are final segments of $\omega$; for $s \in \omega^{< \omega}$, let $\tau_0(s) = \min \tau(s)$. Now define, by induction, a strictly increasing sequence $(n_i)_{i \in \omega}$ of integers in the following way: let $n_0 = \tau_0(\varnothing)$, and for $i \in \omega$, let $n_{i + 1}$ be the maximum of $n_i + 1$ and of the $\tau_0(n_{i_0}, \ldots, n_{i_{k - 1}})$'s for $k \in \omega$ and $0 \leqslant i_0 < \ldots < i_{k - 1} = i$. Let $N = \{n_i \mid i \in \omega\}$; then $N$ is as required. Indeed, an infinite subset of $N$ has the form $\{n_{i_k} \mid k \in \omega\}$ for a strictly increasing sequence of integers $(i_k)_{k \in \omega}$. To prove that $(n_{i_k})_{k \in \omega} \in \X$, it is enough to prove this sequence is the outcome of some legal run of the game $F_\omega$ during which player \I{} always plays according to the strategy $\tau$. In other words, letting, for all $k \in \omega$, $P_k = \tau(n_{i_0}, \ldots, n_{i_{k - 1}})$, we have to show that during the following run of the game $F_\omega$, player \II{} always respects the rules:

\smallskip

\begin{tabular}{ccccccc}
\textbf{I} & $P_0$ & & $P_1$ & & $\hdots$ & \\
\textbf{II} & & $n_{i_0}$ & & $n_{i_1}$ & & $\hdots$ 
\end{tabular}

\smallskip

\noindent But by construction, we have that $n_{i_0} \geqslant n_0 = \tau_0(\varnothing) = \min P_0$, and for $k \geqslant 1$, $n_{i_k} \geqslant n_{i_{k - 1} + 1} \geqslant \tau_0(n_{i_0}, \ldots, n_{i_{k - 1}}) = \min P_k$, which concludes the proof.

\end{proof}

The setting of Gowers spaces does not give enough structure to get such a result in general. A general version of this result will be given in section \ref{par5}, in the setting of \emph{approximate asymptotic spaces} with some additional structure; and, in a very different way, the setting of \emph{Ramsey spaces} presented in \cite{todorcevicorange} is also convenient to get non game-theoretical infinite-dimensional Ramsey results.

\smallskip

We now introduce the \emph{pigeonhole principle}, a property of Gowers spaces under which  we can get a Ramsey result involving a strategy for player \I{} in the asymptotic game in both sides. This result is the best possible we can get in this general setting, and Lemma \ref{asympexact} shows that this is the right analogue of Mathias--Silver's theorem.
 
\smallskip
 
In the rest of this paper, denote by $q \subseteq_s A$, for $q \in P$, $s \in X^{< \omega}$ and $A \subseteq X$, the fact that for every $x \in X$ such that $s \concat x \vartriangleleft q$, we have $x \in A$. This notation could sound strange, however, in spaces where $P \subseteq \mathcal{P}(X)$ and $(x_0, \ldots, x_n) \vartriangleleft q \Leftrightarrow x_n \in q$, we have that $q \subseteq_s A$ iff $q \subseteq A$. Let us introduce the pigeonhole principle.

\begin{defin}

The Gowers space $\G$ is said to satisfy the \emph{pigeonhole principle} if for every $p \in P$, $s \in X^{<\omega}$ and $A \subseteq X$, there exists $q \leqslant p$ such that either $q \subseteq_s A$, or $q \subseteq_s A^c$.

\end{defin}

The pigeonhole principle trivially holds in the Mathias--Silver space. It also holds in the Rosendal space over the field $\mathbb{F}_2$: this is actually a rephrasing of Hindman's theorem for $\textup{FIN}$ (see for example \cite{todorcevicorange}, Theorem 2.25). However, it does not hold in the Rosendal space over $K$, for $K \neq \mathbb{F}_2$: to see this, take for example for $A$ the set of all vectors whose first nonzero coordinate is $1$. Note that apart from this trivial obstruction, the pigeonhole principle does not hold in the Rosendal space for much more intrinsic reasons. Indeed, consider the \emph{projective Rosendal space}, i.e. the forgetful Gowers space $\mathcal{PR}_K = (P, \mathbb{P}(E), \subseteq, \subseteq^*, \subseteq)$, where $\mathbb{P}(E)$ is a countably infinite-dimensional projective space over the field $K$ (that is, the set of vector lines of some countably infinite-dimensional $K$-vector space $E$), $P$ is the set of block subspaces of $E$ relative to a fixed basis $(e_i)_{i \in \omega}$ of $E$, $\subseteq^*$ is the inclusion up to finite codimension as in the definition of the Rosendal space, and where since the space is forgetful, the relation usually denoted by $\vartriangleleft$ is viewed as a relation between points and subspaces, here the inclusion. Then for $K \neq \mathbb{F}_2$, the pigeonhole principle still does not hold in $\mathcal{PR}_K$: take for example for $A$ the set of all vector lines $Kx$, where the first and the last non-zero coordinates of $x$ are equal.

\smallskip

Under the pigeonhole principle, we will show a form of converse assertion to Lemma \ref{ImplGames}:

\begin{prop}\label{consPigeonhole}

Suppose that the Gowers space $\G$ satisfies the pigeonhole principle. Let $\X \subseteq X^\omega$ and $p \in P$. If player \II{} has a strategy in $G_p$ to reach $\X$, then there exists $q \leqslant p$ such that \I{} has a strategy in $F_q$ to reach $\X$.

\end{prop}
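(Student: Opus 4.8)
The plan is to convert a strategy for \II{} in Gowers' game $G_p$ into a strategy for \I{} in the asymptotic game $F_q$ for a suitable $q \leqslant p$, using the pigeonhole principle to control which points \II{} can play. The key idea is that when \I{} plays a finite-codimensional $p_n \lessapprox q$ in $F_q$, the set of admissible answers $x_n$ for \II{} (those with $(x_0, \dots, x_n) \vartriangleleft p_n$) is constrained, and by iterating the pigeonhole principle along a fusion argument we can arrange that, whatever \II{} plays, we are back in a position from which \II{}'s original strategy in $G_p$ keeps producing an outcome in $\X$. Concretely, fix $\sigma$, a strategy for \II{} in $G_p$ to reach $\X$. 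For each finite sequence $s = (x_0, \dots, x_{n-1})$ that is ``consistent'' (i.e. can arise as the first $n$ moves of \II{} in a run of $G_p$ where she follows $\sigma$ against some legal moves of \I{}), we want to assign to \I{} a subspace so that the resulting asymptotic play in $F_q$ is simulated by a play of $G_p$ following $\sigma$.

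The central device is a fusion/combinatorial-forcing construction. I would build a $\leqslant$-decreasing sequence $(q_n)_{n \in \omega}$ with $q_0 \leqslant p$, together with, at each level $n$, a countable set of consistent sequences of length $n$ and an assignment of data, maintaining the invariant: for every consistent $s$ of length $n$ already handled, there is a subspace $p_s \lessapprox q_n$ such that for every $x$ with $s \concat x \vartriangleleft p_s$, the sequence $s \concat x$ is again consistent (i.e. $x$ is a legal $\sigma$-response to some move $r$ of \I{} with $r \leqslant$ the relevant subspace), and $s \concat x$ is then handled at level $n+1$. To obtain $p_s$ at a given stage: the set of $x$ that $\sigma$ would play in response to \I{} playing some $r$ with $s \concat x \vartriangleleft r$ — ranging over all possible $r$ — need not a priori be captured inside one subspace, so one applies the pigeonhole principle (for the sequence $s$ and an appropriate set $A$ of ``good'' points $x$, namely those for which $\sigma$ has a response keeping us consistent and compatible with $q_n$) to shrink to a $q$ where either $q \subseteq_s A$ or $q \subseteq_s A^c$; a compatibility/density argument rules out the $A^c$ alternative, so $q \subseteq_s A$ and we may take $p_s \lessapprox q$ with $p_s \leqslant q_n$. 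Doing this for all countably many $s$ at level $n$ and taking a lower bound gives $q_{n+1}$. Finally let $q \leqslant q_0$ with $q \leqslant^* q_n$ for all $n$ (axiom 3), and define \I{}'s strategy in $F_q$ to play, after \II{} has played $(x_0, \dots, x_{n-1})$, a subspace $p_n \lessapprox q$ with $p_n \leqslant p_{(x_0,\dots,x_{n-1})}$ (possible since $q \leqslant^* q_{n+1}$ and $p_{(x_0,\dots,x_{n-1})} \lessapprox q_{n+1}$, using axiom 2 as in the simulations earlier in the paper). Any run of $F_q$ against this strategy then has, by the invariant, each $x_n$ a legal $\sigma$-response, so the outcome equals the outcome of a run of $G_p$ in which \II{} follows $\sigma$, hence lies in $\X$.

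The main obstacle I expect is the correct bookkeeping in the fusion: making precise the notion of a ``consistent'' sequence together with the witnessing move of \I{} in $G_p$, and verifying that the pigeonhole principle can be applied to a set $A$ defined so that $q \subseteq_s A$ actually delivers a single subspace $p_s$ from which every admissible \II{}-move stays consistent — one has to be careful that $\sigma$'s response depends on the whole history in $G_p$ (including \I{}'s moves there), not just on \II{}'s moves, so the ``state'' one carries along the fusion must record enough of that history, in the style of the states used in the proof of Proposition \ref{mainaux}. Ruling out the $q \subseteq_s A^c$ side of the pigeonhole dichotomy — i.e. showing it contradicts the assumption that $\sigma$ is a winning strategy, by having \I{} in $G_p$ force \II{} into the shrunk subspace and derive an outcome avoiding $\X$ — is the other delicate point, and is where the hypothesis that \II{} has a strategy (rather than just that many outcomes lie in $\X$) is genuinely used.
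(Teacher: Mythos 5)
Your proposal is correct and follows essentially the same route as the paper: your ``consistent sequences with witnessing histories'' are the paper's states, your set $A$ of good points is the paper's set $\Ast$ of points reachable from a state, the pigeonhole dichotomy with the $A^c$ branch refuted by letting \I{} play the shrunk subspace against \II{}'s winning strategy is exactly the paper's key Fact, and the fusion over a countable enumeration of finite sequences followed by a $\leqslant^*$-diagonal lower bound and the described strategy for \I{} in $F_q$ is the paper's construction. The two delicate points you flag (recording \I{}'s moves in the state, and ruling out the $A^c$ alternative) are precisely the ones the paper addresses, and in the way you anticipate.
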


Before proving this proposition, let us make some remarks. First, Proposition \ref{consPigeonhole} immediately implies the following corollary:

\begin{cor}\label{strongStratRamsey}

Suppose that the Gowers space $\G$ satisfies the pigeonhole principle. Let $\X \subseteq X^\omega$ be a strategically Ramsey set. Then for all $p \in P$, there exists $q \leqslant p$ such that in $F_q$, player \I{} has a strategy either to reach $\X$, or to reach $\X^c$.

\end{cor}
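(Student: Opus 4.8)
The plan is to fix a strategy $\sigma$ for \II{} in $G_p$ reaching $\X$, viewed (as is customary in this paper) as a map sending a nonempty finite sequence $(p_0,\dots,p_i)$ of legal moves of \I{} to the point $\sigma(p_0,\dots,p_i)\in X$ that \II{} then plays, so that $(x_0,\dots,x_i)\vartriangleleft p_i$ whenever $x_j=\sigma(p_0,\dots,p_j)$ for all $j\leqslant i$. I want to produce $q\leqslant p$ together with a strategy for \I{} in $F_q$ that secretly runs a play of $G_p$ in which \II{} follows $\sigma$ and whose outcome equals that of the $F_q$-play; then the outcome lies in $\X$. The obstacle that shapes the argument is that the naive bookkeeping --- calling a finite sequence $s$ ``$\sigma$-reachable'' if \emph{some} run of $G_p$ consistent with $\sigma$ produces $s$ --- yields, at the end of an $F_q$-run, one shadow play of $G_p$ for each finite initial segment of \II{}'s play, and these need not be compatible; since $P$ is arbitrary there is no compactness available to splice them into a single infinite play. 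The remedy is to choose shadow plays coherently from the start.

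\smallskip

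First I would build such a coherent system. By recursion on $|s|$, I define for $s\in X^{<\omega}$ a predicate ``$s$ is good'' together with, for good $s$, a finite play $\Phi(s)=(p_0,\dots,p_{|s|-1})$ of $G_p$ realising $s$ (i.e. $p_j\leqslant p$ and $\sigma(p_0,\dots,p_j)$ is the $j$-th term of $s$ for all $j$): $\varnothing$ is good with $\Phi(\varnothing)$ empty; if $s$ is good with $\Phi(s)=(p_0,\dots,p_{n-1})$, then $s\concat x$ is declared good exactly when some $p_n\leqslant p$ satisfies $\sigma(p_0,\dots,p_{n-1},p_n)=x$, in which case one such $p_n$ is fixed once and for all and $\Phi(s\concat x):=(p_0,\dots,p_{n-1},p_n)$. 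The key feature is that $\Phi(s\concat x)$ \emph{extends} $\Phi(s)$, so along any infinite sequence all of whose initial segments are good the $\Phi$'s amalgamate into one infinite play of $G_p$ against $\sigma$. For good $s$ set $A_s:=\{x\in X:s\concat x\text{ is good}\}=\{\sigma(\Phi(s)\concat r):r\leqslant p\}$; note $A_s\neq\varnothing$ (take $r=p$), and $x\in A_s$ entails $s\concat x$ good.

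\smallskip

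Next, a fusion driven by the pigeonhole principle. Enumerate $X^{<\omega}=\{s_0,s_1,\dots\}$ and build a $\leqslant$-decreasing sequence $p=t_0\geqslant t_1\geqslant\cdots$ as follows: if $s_k$ is not good, put $t_{k+1}=t_k$; otherwise apply the pigeonhole principle to the subspace $t_k$, the sequence $s_k$ and the set $A_{s_k}$, obtaining $t_{k+1}\leqslant t_k$ with $t_{k+1}\subseteq_{s_k} A_{s_k}$ or $t_{k+1}\subseteq_{s_k} A_{s_k}^c$. The second alternative cannot hold: since $t_{k+1}\leqslant t_k\leqslant\cdots\leqslant p$, the point $x^\ast:=\sigma(\Phi(s_k)\concat t_{k+1})$ is a legal reply of \II{} in $G_p$ to \I{}'s move $t_{k+1}$, so $x^\ast\in A_{s_k}$ while $s_k\concat x^\ast\vartriangleleft t_{k+1}$, contradicting $t_{k+1}\subseteq_{s_k} A_{s_k}^c$. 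Hence $t_{k+1}\subseteq_{s_k} A_{s_k}$ whenever $s_k$ is good. By axiom 3 of the definition of a Gowers space (and the remark following it) fix $q\leqslant p$ with $q\leqslant^* t_k$ for every $k$.

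\smallskip

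Finally I would define \I{}'s strategy in $F_q$ so as to maintain the invariant that the position $(x_0,\dots,x_{n-1})$ played so far by \II{} is good. At stage $n$, writing $s:=(x_0,\dots,x_{n-1})=s_k$: using axiom 2 and $q\leqslant^* t_{k+1}$, choose $q_n\in P$ with $q_n\lessapprox q$ and $q_n\leqslant t_{k+1}$, and have \I{} play $q_n$. Any legal reply $x_n$ of \II{} then satisfies $(x_0,\dots,x_n)\vartriangleleft q_n\leqslant t_{k+1}$, so $s_k\concat x_n\vartriangleleft t_{k+1}$ by axiom 5, hence $x_n\in A_{s_k}$, i.e. $(x_0,\dots,x_n)$ is again good; this preserves the invariant, and at the first move it starts it since $\varnothing$ is good. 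Along the resulting run every initial segment of the outcome $(x_i)_{i\in\omega}$ is good, so the coherently chosen shadow plays $\Phi(x_0,\dots,x_{n-1})$ amalgamate into an infinite play of $G_p$ in which \II{} follows $\sigma$ and which has outcome $(x_i)_{i\in\omega}$; therefore $(x_i)_{i\in\omega}\in\X$, so \I{}'s strategy reaches $\X$ in $F_q$, as required. The only point needing genuine care is the one flagged in the first paragraph: one must check --- as in the fusion step --- that the pigeonhole argument still bites on the sets $A_s$ attached to the restrictive notion ``good'' rather than to ``$\sigma$-reachable''; it does, precisely because each $A_s$ is nonempty and meets, in the $\subseteq_s$-sense, every subspace below $p$.
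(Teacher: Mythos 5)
Your argument is correct and is essentially the paper's own: the corollary is obtained there as an immediate consequence of Proposition \ref{consPigeonhole}, whose proof you have inlined --- your ``good'' sequences with coherently chosen shadow plays $\Phi(s)$ are the paper's states $\st_n$, your fusion step ruling out $t_{k+1}\subseteq_{s_k}A_{s_k}^c$ is exactly Fact \ref{factdeuxonze}, and the diagonalization and invariant-preserving strategy for \I{} are the same. The only thing left implicit is the one-line reduction from the hypothesis: strategic Ramseyness first gives some $q_0\leqslant p$ with either \I{} already winning $F_{q_0}$ for $\X^c$ (done) or \II{} winning $G_{q_0}$ for $\X$, and your construction should then be run below $q_0$ rather than below $p$, since the hypothesis does not directly hand you a strategy for \II{} in $G_p$.
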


This corollary has some kind of converse. Indeed, for every $s \in X^{< \omega}$, consider the Gowers space $\G^s = (P, X, \leqslant, \leqslant^*, \vartriangleleft^s)$, where $P$, $X$, $\leqslant$ and $\leqslant^*$ are the same as in $\G$ and where $t \vartriangleleft^s p \Leftrightarrow s \concat t \vartriangleleft p$. Then it is not hard to see that $\G$ satisfies the pigeonhole principle if and only if the conclusion of Corollary \ref{strongStratRamsey} holds for all sets of the form $\{(x_n)_{n \in \omega} \mid x_0 \in A\}$ (where $A \subseteq X$), in the space $\G^s$ for every $s$. Indeed, in $\G^s$, player \I{} has a strategy in $F_q$ to reach the set $\{(x_n)_{n \in \omega} \mid x_0 \in A\}$ if and only if there exists $q_0 \lessapprox q$ such that $q_0 \subseteq_s A$. In the particular case of a forgetful space, the satisfaction of the conclusion of the last corollary for clopen sets is thus equivalent to the pigeonhole principle.

%, let $p \in P$, $s \in X^{<\omega}$, and $A \subseteq X$. Consider the set $\X = \{(x_n)_{n \in \omega} \in X^\omega \mid x_0 \in A\}$. By assumption, there exists $q \leqslant p$ such that in the space $\G^s$, either \I{} has a strategy in $F_q$ to reach $\X$, or he has one to reach $\X^c$. In the first case, his strategy tells him, at the first turn of $F_q$, to play some $q_0 \lessapprox q$; then, whatever the answer $x_0 \vartriangleleft^s q_0$ of player \II{} is, if player \I{} continues to play according to his strategy, the outcome of the game will be some sequence $(x_0, x_1, \ldots)$ belonging to $\X$, what means that $x_0 \in A$; so $q_0 \subseteq_s A$. In the second case, we show in the same way that there exists $q_0 \lessapprox q$ such that $q_0 \subseteq_s A$, what concludes. Thus, the satisfaction of the conclusion of Corollary \ref{strongStratRamsey} for clopen sets in $\G^s$ for every $s \in X^{< \omega}$ is equivalent to the pigeonhole principle in $\G$. Observe that if $\G$ is a forgetful space, then for every $s \in X^{< \omega}$, we have $\G^s = \G$; so for such a space, the pigeonhle principle is actually equivalent to the fact that the conclusion of Corollary \ref{strongStratRamsey} holds for sets of the form $\{(x_n)_{n \in \omega} \mid x_0 \in A\}$.

\smallskip

Also observe that Corollary \ref{strongStratRamsey} applied to the Mathias--Silver space, combined with Lemma \ref{asympexact}, gives that a set $\X \subseteq \elinf$ is Ramsey (in the sense of Mathias--Silver's theorem) if and only if it is strategically Ramsey in the Mathias--Silver space (when seen as a subset of $\omega^\omega$). In particular, Mathias--Silver's theorem is a consequence of the abstract Rosendal's Theorem \ref{abstractRosendal}.

\smallskip

We now prove Proposition \ref{consPigeonhole}.

\begin{proof}[Proof of Proposition \ref{consPigeonhole}]

Fix $\tau$ a strategy for \II{} in $G_p$ to reach $\X$. As in the proof of Proposition \ref{mainaux}, call a \emph{state} a partial play of $G_p$ either empty or ending with a move of \II{}, during which \II{} always plays according to her strategy. Say that a state \emph{realises} a sequence $(x_0, \ldots, x_{n - 1}) \in X^{< \omega}$ if it has the form $(p_0, x_0, \ldots, p_{n - 1}, x_{n - 1})$. Define in the same way the notion of a \emph{total state} (which is a total play of $G_p$) and of realisation for a total state; if an infinite sequence is realised by some total state, then it belongs to $\X$. Say that a point $x \in X$ is \emph{reachable} from a state $\st$ if there exists $r \leqslant p$ such that $\tau(\st \concat r) = x$. Denote by $\Ast$ the set of all points that are reachable from the state \nolinebreak $\st$. We will use the following fact.

\begin{fact}\label{factdeuxonze}

For every state $\st$ realising a finite sequence $s$, and for every $q \leqslant p$, there exists $r \leqslant q$ such that $r \subseteq_s \Ast$.

\end{fact}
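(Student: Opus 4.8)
The plan is to feed the set $\Ast$ itself into the pigeonhole principle and then show that the unfavourable alternative is vacuously impossible.

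Fix a state $\st$ realising $s \in X^{<\omega}$ and some $q \leqslant p$. First I would apply the pigeonhole principle to the triple consisting of the subspace $q$, the sequence $s$, and the set $A := \Ast \subseteq X$; this yields $r \leqslant q$ such that either $r \subseteq_s \Ast$ or $r \subseteq_s \Ast^c$. If the first alternative holds we are done, so the whole content of the proof is to exclude the second.

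So suppose $r \subseteq_s \Ast^c$. Since $r \leqslant q \leqslant p$, transitivity gives $r \leqslant p$, so $r$ is a legal move for \I{} at the state $\st$ in $G_p$ (the only constraint imposed on \I{} is that her move be $\leqslant p$). Consider the resulting partial play $\st \concat r$, and let $x := \tau(\st \concat r)$ be \II{}'s reply dictated by her strategy. Because $\tau$ is a strategy for \II{} in $G_p$, this reply obeys the rule of the game, i.e.\ $s \concat x \vartriangleleft r$. But then, on the one hand, $x$ is reachable from $\st$ — witnessed precisely by $r \leqslant p$ — so $x \in \Ast$; and on the other hand $s \concat x \vartriangleleft r$ combined with $r \subseteq_s \Ast^c$ forces $x \in \Ast^c$. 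This contradiction rules out the second alternative, whence $r \subseteq_s \Ast$, as required.

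There is essentially no technical obstacle here once one spots that $\Ast$ is the correct set on which to pigeonhole; the only point needing care is checking that $r \subseteq_s \Ast^c$ is genuinely self-contradictory, and this rests on the single observation that $r$ stays a legal move for \I{} (so \II{} must respond to it by her strategy) while still being $\leqslant p$ (so that response lies in $\Ast$ by the very definition of reachability). The argument uses the pigeonhole principle once and nothing else from the Gowers-space axioms beyond what is already built into $\tau$ being a legal strategy.
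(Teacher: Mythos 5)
Your proof is correct and is essentially identical to the paper's: both invoke the pigeonhole principle on $\Ast$ and rule out the alternative $r \subseteq_s (\Ast)^c$ by letting \I{} play $r$ at the state $\st$ and observing that \II{}'s reply $\tau(\st \concat r)$ would have to lie simultaneously in $\Ast$ (by reachability) and in $(\Ast)^c$ (by the rule $s \concat x \vartriangleleft r$). Nothing further to add.
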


\begin{proof}

Otherwise, by the pigeonhole principle, there would exist $r \leqslant q$ such that $r \subseteq_s (\Ast)^c$. But then \I{} could play $r$ after the partial play $\st$, and \II{} would answer, according to her strategy, by $x = \tau(\st \concat r)$ that should satisfy $s \concat x \vartriangleleft r$. Since $r \subseteq_s (\Ast)^c$, this would imply that $x \in (\Ast)^c$. But we also have, by definition of $\Ast$, that $x \in \Ast$, a contradiction.

\end{proof}

Now let $(s_n)_{n \in \omega}$ be an enumeration of $X^{< \omega}$ such that if $s_m \subseteq s_n$, then $m \leqslant n$. We define, for some $n \in \omega$, a state $\st_n$ realising $s_n$, by induction in the following way: $\st_0 = \varnothing$ and for $n \geqslant 1$, letting $s_n = s_m \concat x$ for some $m < n$ and some $x \in X$,

\begin{itemize}

\item if $\st_m$ has been defined and if $x$ is reachable from $\st_m$, then choose a $r \leqslant p$ such that $x = \tau(\st_m \concat r)$ and put $\st_n = \st_m \concat (r, x)$,

\item otherwise, $\st_n$ is not defined.

\end{itemize}

\noindent Observe that if $\st_n$ is defined and if $s_m \subseteq s_n$, then $\st_m$ is defined and $\st_m \subseteq \st_n$.

\smallskip

We now define a $\leqslant$-decreasing sequence $(q_n)_{n \in \omega}$ of elements of $P$ in the following way: $q_0 = p$ and

\begin{itemize}

\item if $\st_n$ is defined, then $q_{n + 1}$ is the result of the application of Fact \ref{factdeuxonze} to $\st_n$ and $q_n$;

\item $q_{n + 1} = q_n$ otherwise.

\end{itemize}

Finally, let $q \leqslant p$ be such that for every $n \in \omega$, $q \leqslant^* q_n$. We will show that \I{} has a strategy in $F_q$ to reach $\X$. We describe this strategy on the following play of $F_q$:

\smallskip

\begin{tabular}{ccccccc}
\textbf{I} & $u_0$ & & $u_1$ & & $\hdots$ & \\
\textbf{II} & & $x_0$ & & $x_1$ & & $\hdots$ 
\end{tabular}

\smallskip

\noindent We actually show that \I{} can always play preserving the fact that, if $n_i \in \omega$ is such that $s_{n_i} = (x_0, \ldots, x_{i - 1})$, then $\st_{n_i}$ is defined. This will be enough to conclude: indeed, $\bigcup_{i \in \omega} \st_{n_i}$ will be a total state realising the sequence $(x_i)_{i \in \omega}$, showing that this sequence belongs to $\X$.

\smallskip

Suppose that the $i$\textsuperscript{th} turn of the play has just been played, so the sequence $s_{n_i} = (x_0, \ldots, x_{i - 1})$ has been defined, in such a way that $\st_{n_i}$ is defined. Then by construction of $q_{n_i + 1}$, we have that $q_{n_i + 1} \subseteq_{s_{n_i}} \Asti{n_i}$. We let \I{} play some $u_i$ such that $u_i \lessapprox q$ and $u_i \leqslant q_{n_i + 1}$. Then $u_i \subseteq_{s_{n_i}} \Asti{n_i}$, so whatever is the $x_i$ that \II{} answers with, this $x_i$ is reachable from $\st_{n_i}$. So if $s_{n_{i+ 1}} = s_{n_i} \concat x_i$, then $\st_{n_{i + 1}}$ has been defined, and the wanted property is preserved.

\end{proof}

Observe that this proof can be done in $ZF + DC$, even if the space $\G$ is not supposed effective.

\bigskip\bigskip

\section{Approximate Gowers spaces}\label{par4}

For some applications, for instance in Banach-space theory, it would be useful to have results similar to those presented in the previous sections, but for spaces with an uncountable set of points. However, it turns out that in the definition of a Gowers space, this hypothesis is necessary: without it, Theorems \ref{main} and \ref{abstractRosendal} are not true in general. We begin this section with presenting a counterexample for this. Then, following an idea introduced by Gowers for his Ramsey-type Theorem  \ref{GowersThm}, we introduce a metric version of Gowers spaces, allowing us to get approximate Ramsey-type results in situations where the set of points is uncountable. The results of this section, along with these of the next section, will allow us to directly recover results like Gowers' Theorems \ref{GowersThm} and \ref{Gowersc0theorem}. The interest of the spaces we introduce here is more practical that theoretical: their main aim is to allow applications, for instance in Banach-space geometry.

\begin{prop}

There exists a space $\G = (P, X, \leqslant, \leqslant^*, \vartriangleleft)$ satisfying all the axioms of forgetful Gowers spaces, apart from the fact that here, $X$ is not a countable set but an uncountable Polish space, in which not every Borel set is strategically Ramsey, neither every Borel set is adversarially Ramsey.

\end{prop}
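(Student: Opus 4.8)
The plan is to build a concrete counterexample by taking $X$ to be an uncountable Polish space on which the pigeonhole-type obstruction is maximal, and then use an antichain/perfect-set argument to defeat every strategy simultaneously. Specifically, I would let $X = 2^\omega$ (the Cantor space), and take $P$ to be some poset of ``large'' subsets of $X$ — for instance, the perfect subsets of $2^\omega$, ordered by inclusion, with $\leqslant^*$ the same as $\leqslant$ (or inclusion up to a countable set). One must check the five axioms of a forgetful Gowers space: axioms 1, 2, 5 are immediate when $\leqslant^* = \leqslant$; axiom 4 says every perfect set is nonempty, which is fine; axiom 3 requires that a decreasing sequence of perfect sets has a perfect lower bound, and here one has to be slightly careful (a decreasing intersection of perfect sets can be empty or a singleton), so I would instead arrange $P$ to consist of closures of branches of a fixed tree scheme, or work with the Gowers space whose subspaces are indexed by perfect trees with a fusion argument built in, exactly as in axiom 3's verification for the Mathias–Silver and Rosendal spaces. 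The point of this setup is that, unlike the countable case, a single subspace $q \in P$ still contains continuum-many points, and a strategy only ever ``sees'' countably many of them along any given play.

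The heart of the argument is to exhibit a single Borel (in fact closed, or even clopen in a suitable coding) set $\X \subseteq X^\omega$ witnessing the failure. I would take $\X$ to encode a ``diagonal'' condition: something like $\X = \{(x_n)_{n\in\omega} : x_0 \in D\}$ for a carefully chosen Borel $D \subseteq X$, or, to kill strategies rather than just subspaces, a set depending on infinitely many coordinates such as $\X = \{(x_n) : \forall n\ (x_{n+1} \text{ extends } x_n \text{ as a branch of a fixed tree } T)\}$. The key combinatorial fact to extract is: for any $q \in P$ and any strategy $\sigma$ for either player in $G_q$ (resp. $A_q$, $B_q$), the set of points the strategy can be ``forced to commit to'' is meager/countable, while $q$ carries a perfect set of points, so one can always steer the play outside $\X$; and symmetrically $q$ cannot be pushed entirely inside $\X$ because $\X$ (via $D$) splits every perfect set. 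Concretely: choose $D$ so that both $D$ and $D^c$ are perfectly-meeting (every perfect subset of $X$ meets both $D$ and $D^c$ in a perfect set) — a Bernstein-type or clopen-tail choice works — and so that no $q \in P$ satisfies $q \subseteq_s D$ or $q \subseteq_s D^c$. This is precisely the failure of the pigeonhole principle at the level of a \emph{clopen} set, which, by the converse direction discussed after Corollary \ref{strongStratRamsey}, already rules out the strong conclusion; but here I need to rule out the full strategically/adversarially Ramsey conclusions, so I must additionally show that \emph{no} player has a winning strategy in $G_q$ (resp.\ $A_q$) for \emph{any} $q\leqslant p$. For that I run the diagonalization: enumerate a putative strategy $\sigma$; at each round, $\sigma$ proposes a subspace, and the opponent — having continuum-many legal points available but only countably many ``bad'' ones listed so far by the bookkeeping — picks a point avoiding $\X$ (possible since $\X$ restricted to the available points is, by the Bernstein-type construction, not co-small), producing an outcome in $\X^c$; simultaneously the mirror construction produces, against any strategy aiming at $\X^c$, an outcome in $\X$.

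I would then assemble these pieces: (i) verify $\G$ is a forgetful ``Gowers space with uncountable point set'' satisfying axioms 1–5; (ii) define $\X$ via the Bernstein/clopen-tail set $D$ and note it is Borel (closed, for the tree-branch version); (iii) show that for every $p \in P$ and every $q \leqslant p$, neither player has a winning strategy in the relevant game, using the counting-versus-continuum diagonalization; (iv) conclude $\X$ is neither strategically nor adversarially Ramsey. The main obstacle I anticipate is step (i) together with the interaction between (i) and (iii): axiom 3 (existence of a $\leqslant^*$-lower bound for decreasing sequences) pulls toward making $P$ small and ``fusion-friendly,'' whereas the diagonalization in (iii) wants $P$ rich enough that every $q$ still carries a perfect set of points not yet constrained. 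Balancing these — most cleanly by letting the subspaces be branch-spaces of perfect trees closed under fusion, and letting $D$ be a clopen condition on a coordinate so that $q \subseteq_s D$ genuinely fails for all $q$ — is the delicate part; once the right $P$ and $D$ are fixed, steps (ii)–(iv) are routine Bernstein-style bookkeeping of the kind used in the classical proofs that Bernstein sets are not Ramsey.
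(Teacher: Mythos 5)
There is a genuine gap, and it sits at the heart of your construction. Your principal candidate, $\X = \{(x_n)_{n\in\omega} : x_0 \in D\}$, can never witness the failure of the strategic Ramsey property, for \emph{any} choice of $D$ and in \emph{any} space satisfying the Gowers-space axioms, countable or not. Indeed, fix $p$: either there exists $q \leqslant p$ with $q \subseteq D^c$ (every point $\vartriangleleft q$ lies in $D^c$), in which case \I{} wins $F_q$ by playing $q$ itself at the first turn; or no such $q$ exists, in which case \II{} wins $G_p$ outright, since whatever $p_0 \leqslant p$ player \I{} proposes, some point of $D$ is $\vartriangleleft p_0$ and \II{} plays it. The failure of the pigeonhole principle (which is what your ``$D$ splits every perfect set'' condition amounts to) only obstructs the stronger conclusion with an asymptotic game on \emph{both} sides (Corollary \ref{strongStratRamsey}); it is irrelevant to the strategic and adversarial Ramsey properties themselves, which is exactly why the pigeonhole-free Rosendal space still satisfies Theorem \ref{abstractRosendal}. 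Your fallbacks do not repair this: a Bernstein set is not Borel, so the classical diagonalization you invoke cannot produce a Borel counterexample, and the ``$x_{n+1}$ extends $x_n$ as a branch of $T$'' set is not well defined when the points are already infinite branches. There is also a secondary unresolved issue with axiom 3 for perfect sets under inclusion, which you flag but never settle.

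The idea you are missing is that uncountability of $X$ must be exploited through a \emph{coding of subspaces by points}, creating a dependence between two coordinates that no passage to a subspace can break. The paper works in $\R^\omega$ with block subspaces and points $X = \R^\omega\setminus\{0\}$; a player choosing a point retains complete freedom over its first nonzero coordinate $N(x)\in\R^*$, no matter which subspace constrains it, and $\R^*$ is Borel-isomorphic to the (Polish) set of all block sequences via some $\varphi$. Setting $\X = \{(x_n) : x_1 \text{ is a term of } \varphi(N(x_0))\}$, one gets a Borel set such that \II{} cannot win $G_p$ (after she commits to $x_0$, player \I{} plays a block subspace avoiding every term of the countable set $\varphi(N(x_0))$) and \I{} cannot win $F_p$ (player \II{} picks $x_0$ so that $\varphi(N(x_0))$ spans $p$; any finite-codimensional block subspace of $p$ then contains a tail of that sequence, and \II{} plays one of its terms as $x_1$). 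Note that both obstructions are local to a single round of subspace-versus-point interaction and use the continuum-to-continuum surjectivity of the coding --- nothing like a Bernstein bookkeeping over all strategies is needed, and none would terminate inside the Borel sets anyway.
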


\begin{proof}[Sketch of proof]

Work in the $\R$-vector space $\R^\omega$, endowed with the product topology.  For $x = (x^i)_{i \in \omega}\in \R^\omega$, let $\supp(x) = \{i \in \omega \mid x^i \neq 0\}$. Relative to this notion of support, define as usual the notion of a block-sequence, and define a block subspace as a closed subspace spanned by a block-sequence. Let $P$ be the set of block-subspaces, and $X = \R^\omega \setminus \{0\}$. The space $\G$ is defined similarly as the Rosendal space: $\G = (P, X, \subseteq, \subseteq^*, \in)$.

\smallskip

We sketch the construction of a Borel set $\X \subseteq X^\omega$ that is not strategically Ramsey (here, $X^\omega$ is endowed with the product of the Polish topologies on $X$).  Observe that the set $\{(x_n)_{n \in \omega} \in X^\omega \mid (x_0, x_2, \ldots) \in \X\}$ is also Borel and is not adversarially Ramsey.

\smallskip

For $x = (x^i)_{i \in \omega}\in X$, let $N(x) = x^{\min \supp(x)}$. Let $\widetilde{P}$ denote the set of block-sequences; this is a closed subset of $X^\omega$, so there is a Borel isomorphism $\varphi : \R^* \longrightarrow \widetilde{P}$. We define the set $\X$ in the following way: $(x_n)_{n \in \omega}$ is in $\X$ if and only if $x_1$ is equal to a term of the block sequence $\varphi(N(x_0))$. Then $\X$ is not strategically Ramsey. Indeed, given $p \in P$, player \II{} cannot have a strategy in $G_p$ to reach $\X$, because if she played $x_0$ at the first turn, then it is not hard to see that at the second turn, player \I{} can always play a block-subspace $p_1 \subseteq p$ containing no term of the block-sequence $\varphi(N(x_0))$. And player \I{} cannot either have a strategy in $F_p$ to reach $\X^c$, because at the first turn, player \II{} can always play a vector $x_0$ such that $\varphi(N(x_0))$ is a block-sequence spanning the block-subspace $p$. So the subspace $p_1 \lessapprox p$ played by \I{} will necessarily contain a term of the sequence $\varphi(N(x_0))$, and \II{} will be able to play this term to reach her goal.

\end{proof}

\begin{rem}

In the last proof, if we endow $X^\omega$ with the product of the discrete topologies on $X$ instead of the product of the Polish topologies, then the counterexample $\X$ we built is actually clopen.

\end{rem}

We now define the metric version of Gowers spaces we will use for the rest of this paper.

\begin{defin}

An \emph{approximate Gowers space} is a sextuple $\G = (P, X, d, \leqslant, \leqslant^*, \vartriangleleft)$, where $P$ is a nonempty set, $X$ is a nonempty Polish space, $d$ is a compatible distance on $X$, $\leqslant$ and $\leqslant^*$ are two quasiorders on $P$, and $\vartriangleleft \;\subseteq X \times P$ is a binary relation, satisfying the same axioms 1. -- 3. as in the definition of a Gowers' space and satisfying moreover the two following axioms:

\begin{enumerate}

\setcounter{enumi}{3}

\item for every $p \in P$, there exists $x \in X$ such that $x \vartriangleleft p$;

\item for every $x \in X$ and every $p, q \in P$, if $x \vartriangleleft p$ and $p\leqslant q$, then $x \vartriangleleft q$.

\end{enumerate}

\noindent The relation $\lessapprox$ and the compatibility relation on $P$ are defined in the same way as for a Gowers space.

\smallskip

For $p \in P$, define the games $A_p$, $B_p$, $F_p$, and $G_p$ exactly in the same way as for Gowers spaces (see Definitions \ref{DefAdversarialGames} and \ref{DefGowersGames}), except that the rules $(x_0, y_0, \ldots, x_{i - 1}, y_{i - 1}, x_i) \vartriangleleft p_i$ and $(x_0, y_0, \ldots, x_i, y_i) \vartriangleleft q_i$ in the definition of $A_p$ and $B_p$ and the rule $(x_0, \ldots, x_i) \vartriangleleft p_i$ in the definition of $F_p$ and $G_p$,  will be naturally replaced with respectively $x_i \vartriangleleft p_i$, $y_i \vartriangleleft q_i$ and $x_i \vartriangleleft p_i$. The outcome is, there, an element of $X^\omega$.

\end{defin}

Observe that, with this definition, approximate Gowers spaces are always forgetful, that is, the relation $\vartriangleleft$ is defined as a subset of $X \times P$ and not as a subset of $\Seq(X) \times P$.
This technical restriction is needed for our results (in particular in the proof of Theorem \ref{ThmApproxRamsey}).

\smallskip

In the rest of this section, we fix an approximate Gowers space $\G = (P, X, d, \leqslant\nolinebreak, {\leqslant^*}\nolinebreak, {\vartriangleleft} \nolinebreak)$. An important notion in the setting of approximate Gowers spaces is that of \emph{expansion}.

\begin{defin}

\alaligne

\begin{enumerate}

\item Let $A \subseteq X$ and $\delta > 0$. The \emph{$\delta$-expansion} of $A$ is the set $(A)_\delta = \{x \in X \mid \exists y \in \nolinebreak A \; d(x, y) \leqslant \delta\}$;

\item Let $\X \subseteq X^\omega$ and $\Delta = (\Delta_n)_{n \in \omega}$ be a sequence of positive real numbers. The \emph{$\Delta$-expansion} of $\X$ is the set $(\X)_\Delta = \{(x_n)_{n \in \omega} \in X^\omega \mid \exists (y_n)_{n \in \omega} \in \X \; \forall n \in \omega \; d(x_n, y_n) \leqslant \Delta_n\}$.

\end{enumerate}

\end{defin}

We can now define the notions of adversarially Ramsey sets and of strategically Ramsey sets in an approximate Gowers space:

\begin{defin}

Let $\X \subseteq X^\omega$.

\begin{enumerate}

\item We say that $\X$ is \emph{adversarially Ramsey} if for every sequence $\Delta$ of positive real numbers and for every $p \in P$, there exists $q \leqslant p$ such that either player \I{} has a strategy in $A_q$ to reach $(\X)_\Delta$, or player \II{} has a strategy in $B_q$ to reach $(\X^c)_\Delta$.

\item We say that $\X$ is \emph{strategically Ramsey} if for every sequence $\Delta$ of positive real numbers and for every $p \in P$, there exists $q \leqslant p$ such that either player \I{} has a strategy in $F_q$ to reach $\X^c$, or player \II{} has a strategy in $G_q$ to reach $(\X)_\Delta$.

\end{enumerate}

\end{defin}

Observe that if $\G_0 = (P, X, \leqslant, \leqslant^*, \vartriangleleft)$ is a forgetful Gowers space (where we consider $\vartriangleleft$ as a subset of $X \times P$), then we can turn it into an approximate Gowers space $\G_0' = (P, X, d, \leqslant, \leqslant^*, \vartriangleleft)$ by taking for $d$ the discrete distance on $X$ ($d(x, y) = 1$ for $x \neq y$). In this way, for $0 < \delta < 1$ and $A \subseteq X$ we have $(A)_\delta = A$, and for $\Delta$ a sequence of positive real numbers strictly lower than $1$ and for $\X \subseteq X^\omega$, we have $(\X)_\Delta = \X$. So for a set $\X \subseteq X^\omega$, the definition of being adversarially or strategically Ramsey in $\G_0$ and in $\G_0'$ coincide. Therefore, we will consider forgetful Gowers spaces as particular cases of approximate Gowers spaces.

\smallskip

Another interesting family of examples of approximate Gowers spaces is the following. Given a Banach space $E$ with a Schauder basis $(e_i)_{i \in \omega}$, we can consider the \emph{canonical approximate Gowers space over $E$}, $\G_E = (P, S_E, d, \subseteq, \subseteq^*, \in)$, where $P$ is the set of all block subspaces of $E$, $S_E$ is the unit sphere of $E$, $d$ the distance given by the norm, and $X \subseteq^* Y$ if and only if $Y$ contains some finite-codimensional block subspace of $X$. We will see in the next section how to get Gowers' Theorems \ref{GowersThm} and \ref{Gowersc0theorem} from the study of this space.

\smallskip

The results that generalize Theorems \ref{main} and \ref{abstractRosendal} to adversarial Gowers spaces are the following:

\begin{thm}\label{ThmApproxRamsey}

\alaligne

\begin{enumerate}

\item Every Borel subset of $X^\omega$ is adversarially Ramsey;

\item Every analytic subset of $X^\omega$ is strategically Ramsey.

\end{enumerate}

\end{thm}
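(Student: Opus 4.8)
The plan is to deduce both parts from their countable-point counterparts, Theorems~\ref{main} and~\ref{abstractRosendal}, by passing to an auxiliary \emph{genuine} Gowers space whose set of points is a countable dense subset $D$ of $X$, and into whose incidence relation the tolerances $\Delta_n$ are hard-coded. This is precisely the situation the remark after Definition~\ref{DefGowersSpaces} anticipates: the auxiliary relation will depend not only on the last point of a finite sequence but also on its length, so that the $n$-th point played in a game may be perturbed by at most $\Delta_n$. Fix $p\in P$ and a sequence $\Delta=(\Delta_n)_{n\in\omega}$ of positive reals, and fix a countable dense $D\subseteq X$. For part~(1) I would introduce $\G'=(P,D,\leqslant,\leqslant^*,\vartriangleleft')$, with $P,\leqslant,\leqslant^*$ as in $\G$, and $(x_0,\dots,x_n)\vartriangleleft' q$ iff there is $z\in X$ with $z\vartriangleleft q$ and $d(x_n,z)\leqslant\Delta_n$. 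Axioms~1.--3. of a Gowers space are inherited verbatim; axiom~4. follows from axiom~4. of $\G$ together with the density of $D$ (as $\Delta_n>0$); axiom~5. follows from axiom~5. of $\G$. So $\G'$ is a Gowers space with an at most countable set of points.

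Since the point-space $D^\omega$ of $\G'$ (a countable discrete space to the power $\omega$) embeds continuously into $X^\omega$, the set $\X\cap D^\omega$ is Borel in $D^\omega$; Theorem~\ref{main} applied to $\G'$ then gives $q\leqslant p$ such that either \I{} has a strategy to reach $\X\cap D^\omega$ in the game $A_q$ of $\G'$, or \II{} has a strategy to reach $\X^c\cap D^\omega$ in the game $B_q$ of $\G'$. It remains to transfer such a strategy to $\G$ by playing the game of $\G$ while simulating, move by move, the corresponding game of $\G'$ with the same subspaces. Suppose \I{} has a strategy $\sigma'$ reaching $\X\cap D^\omega$ in $A_q$ of $\G'$. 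In $A_q$ of $\G$, whenever $\sigma'$ prescribes a point $x\in D$ — legal in $\G'$, i.e.\ $x\vartriangleleft' p_i$ for the last subspace $p_i$ played by \II{}, so there is $z\vartriangleleft p_i$ within the corresponding $\Delta$-distance of $x$ — \I{} plays that $z$ in $\G$ instead (legal there); whenever \II{} answers with some $y\vartriangleleft q_i$, \I{} rounds $y$ to a point $\widetilde y\in D$ within the prescribed $\Delta$-distance (possible by density), which is legal in $\G'$ since $y\vartriangleleft q_i$ witnesses $\widetilde y\vartriangleleft' q_i$, and feeds $\widetilde y$ to $\sigma'$; subspaces are copied verbatim, which is legal because $P,\leqslant,\leqslant^*$ are unchanged. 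The outcome in $\G'$ lies in $\X$ and is coordinatewise within $\Delta$ of the outcome in $\G$, so the latter lies in $(\X)_\Delta$. The symmetric simulation (rounding \I{}'s points, realising \II{}'s prescribed points by nearby legal ones) turns a strategy of \II{} reaching $\X^c\cap D^\omega$ in $B_q$ of $\G'$ into a strategy of \II{} reaching $(\X^c)_\Delta$ in $B_q$ of $\G$. This proves~(1).

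For part~(2) the same scheme applies, with one twist: the conclusion ``\I{} has a strategy in $F_q$ to reach $\X^c$'' admits no expansion, whereas rounding \II{}'s points necessarily perturbs the outcome. I would absorb this by building $\G'$ with halved thickening radii $\Delta_n/2$ and applying Theorem~\ref{abstractRosendal} not to $\X$ but to $(\X)_{\Delta/2}\cap D^\omega$ (where $\Delta/2=(\Delta_n/2)_n$), which is analytic in $D^\omega$ because $(\X)_{\Delta/2}$, being the projection of the intersection of an analytic set with a closed set, is analytic in $X^\omega$. From Theorem~\ref{abstractRosendal} one gets $q\leqslant p$ with either \I{} having a strategy in $F_q$ of $\G'$ reaching the complement of $(\X)_{\Delta/2}$ in $D^\omega$, or \II{} having a strategy in $G_q$ of $\G'$ reaching $(\X)_{\Delta/2}\cap D^\omega$. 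In the first case, the transferred strategy in $F_q$ of $\G$ produces an outcome $(x_n)$ whose rounded image avoids $(\X)_{\Delta/2}$; were $(x_n)\in\X$, that rounded image, being within $\Delta/2$ of $(x_n)$, would lie in $(\X)_{\Delta/2}$, a contradiction, so $(x_n)\in\X^c$. In the second case, the transferred strategy (realising \II{}'s prescribed points by $\Delta_n/2$-close legal points) yields an outcome within $\Delta/2$ of a point of $(\X)_{\Delta/2}$, hence in $(\X)_\Delta$.

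The step I expect to be the real obstacle is getting this bookkeeping of expansions exactly right in part~(2): one must arrange that a single application of the countable theorem, to a suitably pre-expanded set and with the right thickening radii, yields an \emph{exact} $\X^c$ on one side and a $\Delta$-expansion on the other, and one must make sure the length-dependent definition of $\vartriangleleft'$ is aligned with the interleaving of the players' moves in the adversarial games. Everything else — verifying the axioms of $\G'$, the measurability of $\X\cap D^\omega$ (resp.\ $(\X)_{\Delta/2}\cap D^\omega$) in the discrete product topology, and the inductive check that the simulated plays remain legal — is routine.
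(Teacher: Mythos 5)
Your proposal is correct and follows essentially the same route as the paper: pass to a genuine Gowers space on a countable dense subset $D$ whose length-dependent relation builds in the tolerances, apply Theorems~\ref{main} and~\ref{abstractRosendal} to the trace of the set on $D^\omega$, and transfer strategies back by rounding/realising points; your handling of the expansion bookkeeping in part~(2) (halving the radii and pre-expanding to $(\X)_{\Delta/2}$) is exactly the paper's reduction, just packaged into the auxiliary space rather than stated as a separate weaker lemma.
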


\begin{proof}

Observe that to prove \emph{2.}, it is actually sufficient to prove the following apparently weaker result: for every $\X \subseteq X^\omega$ analytic, for every sequence $\Delta$ of positive real numbers and for every $p \in P$, there exists $q \leqslant p$ such that either player \I{} has a strategy in $F_q$ to reach $(\X^c)_\Delta$, or player \II{} has a strategy in $G_q$ to reach $(\X)_\Delta$. Indeed, if $\X$ is analytic, then $(\X)_\frac{\Delta}{2}$ is analytic too; so applying the last result to $(\X)_\frac{\Delta}{2}$ and to the sequence $\frac{\Delta}{2}$, and using the fact that $\left(\left((\X)_{\frac{\Delta}{2}}\right)^c\right)_\frac{\Delta}{2} \subseteq \X^c$ and $\left((\X)_{\frac{\Delta}{2}}\right)_{\frac{\Delta}{2}} \subseteq (\X)_{\Delta}$, we get that $\X$ is strategically Ramsey.

\smallskip

Now let $D \subseteq X$ be a countable dense subset, and $\Delta$ be a sequence of positive real numbers. Consider the Gowers space $\G_\Delta = (P, D, \leqslant, \leqslant^*, \vartriangleleft_\Delta)$, where $\vartriangleleft_\Delta$ is defined by $(y_0, \ldots, y_n) \vartriangleleft_\Delta p$ if there exists $x_n \in X$ with $x_n \vartriangleleft p$ and $d(x_n, y_n) < \Delta_n$. To avoid confusion, denote by $A_p$, $B_p$, $F_p$ and $G_p$ the games in the space $\G$, and by $A_p^\Delta$, $B_p^\Delta$, $F_p^\Delta$ and $G_p^\Delta$ the games in the space $\G_\Delta$.

\smallskip

If $\X$ is Borel (resp. analytic) then the set $\X \cap D^\omega$ is Borel (resp. analytic) too (when $D$ is endowed by the discrete topology), so it is adversarially (resp. strategically) Ramsey in $\G_\Delta$. So to prove the theorem, it is enough to show that for every $p \in P$, we have that:

\begin{itemize}

\item[(i)] if player \I{} has a strategy in $F_p^\Delta$ to reach $\X^c$, then he has a strategy in $F_p$ to reach $(\X^c)_\Delta$;

\item[(ii)] if player \II{} has a strategy in $G_p^\Delta$ to reach $\X$, then she has a strategy in $G_p$ to reach $(\X)_\Delta$;

\item[(iii)] if player \I{} has a strategy in $A_p^\Delta$ to reach $\X$, then he has a strategy in $A_p$ to reach $(\X)_\Delta$;

\item[(iv)] if player \II{} has a strategy in $B_p^\Delta$ to reach $\X^c$, then she has a strategy in $B_p$ to reach $(\X^c)_\Delta$.

\end{itemize}

\noindent We only prove (i) and (ii); the proofs of (iii) and (iv) are naturally obtained by combining the proofs of (i) and (ii).

\begin{itemize}

\item[(i)] As usual, we fix a strategy for \I{} in $F_p^\Delta$, enabling him to reach $\X^c$, and we describe a strategy for \I{} in $F_p$ to reach $(\X^c)_\Delta$ by simulating a play $(p_0, x_0, p_1, x_1, \ldots)$ of $F_p$ by a play $(p_0, y_0, p_1, y_1, \ldots)$ of $F_p^\Delta$ in which \I{} always plays using his strategy; we moreover suppose that the same subspaces are played by \I{} in both games.

\smallskip

Suppose that in both games, the first $n$ turns have been played, so the $p_i$'s, the $x_i$'s and the $y_i$'s are defined for $i < n$. According to his strategy, in $F_p^\Delta$, \I{} plays some $p_n \lessapprox p$. Then we let \I{} play the same $p_n$ in $F_p$, and in this game, \II{} answers with $x_n \in X$ such that $x_n \vartriangleleft p_n$. Then we choose $y_n \in D$ such that $d(x_n, y_n) < \Delta_n$; by the definition of $\vartriangleleft_\Delta$, we have that $(y_0, \ldots, y_n) \vartriangleleft_\Delta p_n$, so we can let \II{} play $y_n$ in $F_p^\Delta$, and the games can continue!

\smallskip

Due to the choice of the strategy of \I{} in $F_p^\Delta$, we get that $(y_n)_{n \in \omega} \in \X^c$, so $(x_n)_{n \in \omega} \in (\X^c)_\Delta$ as wanted.

\item[(ii)] We simulate a play $(p_0, x_0, p_1, x_1, \ldots)$ of $G_p$ by a play $(p_0, y_0, p_1, y_1, \ldots)$ of $G_p^\Delta$ where \II{} uses a strategy to reach $\X$, and we moreover suppose that \I{} plays the same subspaces in both games. Suppose that the first $n$ turns of boths games have been played. In $G_p$, \I{} plays $p_n$. We make \I{} copy this move in $G_p^\Delta$, and according to her strategy, \II{} answers, in this game, by a $y_n \in D$ such that $(y_0, \ldots, y_n) \vartriangleleft_\Delta p_n$. We can find $x_n \in X$ such that $x_n \vartriangleleft p_n$ and $d(x_n, y_n) < \Delta_n$; we let \II{} play this $x_n$ in $G_p$ and the games continue. At the end, we have that $(y_n)_{n \in \omega} \in \X$, so $(x_n)_{n \in \omega} \in (\X)_\Delta$ as wanted.

\end{itemize}

\end{proof}

Say that the approximate Gowers space $\G$ is \emph{analytic} if $P$ is an analytic subset of a Polish space, if the relation $\leqslant$ is a Borel subset of $P^2$, and if for every open set $U \subseteq X$, the set $\{p \in P \mid \exists x \in U \; x \vartriangleleft p\}$ is a Borel subset of $P$. Also recall that if $Y$ is a Polish space, and if $\mathcal{F}(Y)$ is the set of all closed subsets of $Y$, the \emph{Effros Borel structure} on $\mathcal{F}(Y)$ is the $\sigma$-algebra generated by the sets $\{F \in \mathcal{F}(Y) \mid F \cap U \neq \varnothing\}$ where $U$ varies over open subsets of $Y$; with this $\sigma$-algebra, $\mathcal{F}(Y)$ is a standard Borel space (see for example \cite{kechris}, Theorem 12.6). If $P$ is an analytic subset of $\mathcal{F}(X)$ endowed with the Effros Borel structure, and if $\subseteq$ and $\vartriangleleft$ are respectively the inclusion and the membership relation, then $\G$ is an analytic approximate Gowers space. This is, for instance, the case of the canonical approximate Gowers space $\G_E$ over a Banach space $E$ with a basis: indeed, the fact that $F \in \mathcal{F}(S_E)$ is the unit sphere of a block subspace of $E$ can be written ``there exists a block sequence $(x_i)_{i \in \omega}$ such that for every $U$ in a countable basis of open subsets of $S_E$, $F \cap U \neq \varnothing$ if and only if there exists $n \in \omega$ and $(a_i)_{i < n} \in \mathbb{Q}^n \setminus \{0\}$ with $\frac{\sum_{i < n}a_i x_i}{\left\|\sum_{i < n}a_i x_i\right\|} \in U$''.

\smallskip

Observe that if $\G$ is an analytic approximate Gowers space and $\Delta$ a sequence of positive real numbers, then the Gowers space $\G_\Delta$ defined in the proof of Theorem \ref{ThmApproxRamsey} is analytic. So this proof, combined with Corollaries \ref{AdvGamma} and \ref{StratGamma}, gives us the following:

\begin{cor}\label{ApproxGamma}

Let $\Gamma$ be a suitable class of subsets of Polish spaces. Suppose that every $\Gamma$-subset of $\R^\omega$ is determined. Then for every analytic approximate Gowers space $\G = (P, X, d, \leqslant, \leqslant^*, \vartriangleleft)$, we have that:

\begin{enumerate}

\item every $\Gamma$-subset of $X^\omega$ is adversarially Ramsey;

\item every $\exists\Gamma$-subset of $X^\omega$ is strategically Ramsey.

\end{enumerate}

\end{cor}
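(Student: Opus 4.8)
The plan is to rerun the proof of Theorem~\ref{ThmApproxRamsey} essentially verbatim, replacing the appeal to Theorem~\ref{main} by an appeal to Corollary~\ref{AdvGamma}, and the appeal to Theorem~\ref{abstractRosendal} by an appeal to part~1 of Corollary~\ref{StratGamma}, and then checking that the descriptive-set-theoretic bookkeeping survives with ``Borel'' replaced by $\Gamma$ and ``analytic'' replaced by $\exists\Gamma$. So fix an analytic approximate Gowers space $\G = (P, X, d, \leqslant, \leqslant^*, \vartriangleleft)$, a sequence $\Delta$ of positive reals, a countable dense set $D \subseteq X$, and build the discrete Gowers space $\G_\Delta = (P, D, \leqslant, \leqslant^*, \vartriangleleft_\Delta)$ exactly as in that proof. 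The first thing to record is that $\G_\Delta$ is an \emph{analytic} Gowers space: $P$ is analytic and $\leqslant$ is Borel by hypothesis, and $\vartriangleleft_\Delta \subseteq \Seq(D) \times P$ is Borel because $D$ is countable, so it is enough to observe that for each length $n$ and each $y \in D$ the slice $\{p \in P \mid (y_0, \ldots, y_{n-1}, y) \vartriangleleft_\Delta p\}$ equals $\{p \in P \mid \exists x \in B(y, \Delta_n)\; x \vartriangleleft p\}$, which is Borel precisely by the definition of analyticity of $\G$ applied to the open ball $B(y, \Delta_n)$.

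With this in hand, the transfer steps (i)--(iv) from the proof of Theorem~\ref{ThmApproxRamsey} are purely about converting strategies and never mention descriptive complexity, so they apply unchanged. For part~1: if $\X \subseteq X^\omega$ is $\Gamma$, then $\X \cap D^\omega$ is the preimage of $\X$ under the continuous inclusion $D^\omega \hookrightarrow X^\omega$ (the discrete topology on $D$ refines the subspace topology), hence lies in $\Gamma$ as a subset of $D^\omega$; by Corollary~\ref{AdvGamma} applied to the analytic Gowers space $\G_\Delta$ it is adversarially Ramsey there, and steps (iii)--(iv) then produce, for every $p \in P$, a $q \leqslant p$ witnessing that $\X$ is adversarially Ramsey in $\G$. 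For part~2, one first makes the same preliminary reduction as in the proof of Theorem~\ref{ThmApproxRamsey}, passing from an $\exists\Gamma$-set $\X$ to $(\X)_{\Delta/2}$ and to the sequence $\Delta/2$; this is legitimate provided the $\exists\Gamma$-ness of $\X$ is inherited by $(\X)_{\Delta/2}$, which holds because $(\X)_{\Delta/2}$ is the projection to the first $X^\omega$-coordinate of $\{((x_n)_n, (y_n)_n) \in X^\omega \times X^\omega \mid (y_n)_n \in \X,\ \forall n\; d(x_n, y_n) \leqslant \Delta_n/2\}$, which is an $\exists\Gamma$-set (a Borel set intersected with the Borel preimage of $\X$), together with the fact, noted just after the definition of $\exists\Gamma$, that $\exists\Gamma$ does not depend on the auxiliary uncountable Polish space and is therefore closed under projections along Polish spaces. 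One then applies part~1 of Corollary~\ref{StratGamma} to the $\exists\Gamma$-set $(\X)_{\Delta/2} \cap D^\omega$ in the analytic Gowers space $\G_\Delta$, and feeds the resulting strategies through steps (i)--(ii), exactly as in Theorem~\ref{ThmApproxRamsey}.

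There is no genuine obstacle here: the proof is the proof of Theorem~\ref{ThmApproxRamsey} with two black boxes swapped out. The only points requiring a moment's thought are the two bookkeeping claims above --- that $\G_\Delta$ inherits analyticity from $\G$ (which is exactly what the definition of an analytic approximate Gowers space was set up to guarantee), and that $\exists\Gamma$ is closed under expansions (which reduces to closure of $\exists\Gamma$ under projections along Polish spaces and under intersection with Borel sets) --- and both of these are routine once the definitions are unwound.
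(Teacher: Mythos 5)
Your proposal is correct and is essentially the paper's own argument: the paper proves this corollary precisely by observing that $\G_\Delta$ is an analytic Gowers space and then rerunning the proof of Theorem~\ref{ThmApproxRamsey} with Corollaries~\ref{AdvGamma} and~\ref{StratGamma} in place of Theorems~\ref{main} and~\ref{abstractRosendal}. The two bookkeeping points you isolate --- the Borelness of $\vartriangleleft_\Delta$ via the sets $\{p \mid \exists x \in B(y,\Delta_n)\; x \vartriangleleft p\}$, and the closure of $\exists\Gamma$ under $\Delta/2$-expansions needed for the preliminary reduction in part~2 --- are exactly the details the paper leaves implicit, and you verify them correctly.
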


However, it is not straightforward, in the setting of approximate Gowers spaces, to get results in $ZF + DC + AD_\R$, because the proof of Theorem \ref{ThmApproxRamsey} uses the full axiom of choice. Indeed, since there is, in general, an uncountable number of subspaces, in the proof of (ii) (and the same will happen in the proofs of (iii) and (iv)), player \II{} needs $AC$ to choose $x_n$ such that $d(x_n, y_n) < \Delta_n$ and $x_n \vartriangleleft p_n$. However, under a slight restriction, we can get a positive result. Define the notion of an \emph{effective approximate Gowers space} exactly in the same way as for effective Gowers spaces. Effective forgetful Gowers spaces are obviously effective when seen as approximate Gowers spaces, but also, the canonical approximate Gowers space $\G_E$ is effective (this can be shown in the same way as for the Rosendal space). If $\G$ is an effective approximate Gowers space and $\Delta$ a sequence of positive real numbers, then the Gowers space $\G_\Delta$ defined in the proof of Theorem \ref{ThmApproxRamsey} is also effective. And we have:
%In the same way, the proof of Theorem \ref{ThmApproxRamsey}, combined with Corollaries \ref{AdvAD} and \ref{StratGamma}, gives:

\begin{cor}[$ZF + DC + AD_\R$]\label{ApproxAD}

Let $\G = (P, X, d, \leqslant, \leqslant^*, \vartriangleleft)$ be an effective approximate Gowers space such that $P$ is a subset of a Polish space, and such that for every $p \in P$, the set $\{x \in X \mid x \vartriangleleft p\}$ is closed in $X$. Then every subset of $X^\omega$ is adversarially Ramsey and strategically Ramsey.

\end{cor}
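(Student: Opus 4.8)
The plan is to rerun the proof of Theorem \ref{ThmApproxRamsey} essentially verbatim, feeding it the $AD_\R$-based input of Corollaries \ref{AdvAD} and \ref{StratGamma} in place of the determinacy-based one, and replacing the single appeal to the full axiom of choice by a choice-free canonical selection made possible by the hypothesis that each set $\{x \in X \mid x \vartriangleleft p\}$ is closed.

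First I would reduce, exactly as at the beginning of the proof of Theorem \ref{ThmApproxRamsey}, the strategic Ramsey property of $\X$ to the two-sided statement: for every sequence $\Delta$ of positive reals and every $p \in P$ there is $q \leqslant p$ such that either \I{} has a strategy in $F_q$ to reach $(\X^c)_\Delta$, or \II{} has a strategy in $G_q$ to reach $(\X)_\Delta$; this reduction uses only the metric inclusions $\left(\left((\X)_{\Delta/2}\right)^c\right)_{\Delta/2} \subseteq \X^c$ and $\left((\X)_{\Delta/2}\right)_{\Delta/2} \subseteq (\X)_\Delta$, not any definability of $\X$. Then I would fix $\Delta$, a countable dense set $D \subseteq X$ together with a fixed enumeration of $D$, and form the Gowers space $\G_\Delta = (P, D, \leqslant, \leqslant^*, \vartriangleleft_\Delta)$ of the proof of Theorem \ref{ThmApproxRamsey}. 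As recorded in the text, $\G_\Delta$ is effective and $P$ is a subset of a Polish space, so Corollary \ref{AdvAD} and part \emph{2.} of Corollary \ref{StratGamma} — both proved in $ZF + DC + AD_\R$ — apply to it: every subset of $D^\omega$ is adversarially and strategically Ramsey in $\G_\Delta$. Applied to $\X \cap D^\omega$ this yields $q \leqslant p$ with the required dichotomy for the games $A_q^\Delta, B_q^\Delta, F_q^\Delta, G_q^\Delta$.

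It then remains to run the transfers (i)--(iv) of the proof of Theorem \ref{ThmApproxRamsey} and to check that they go through in $ZF + DC$. The selections in which one picks $y_n \in D$ with $d(x_n, y_n) < \Delta_n$ — the only selections occurring in (i), and the \II-side selections in (ii)--(iv) — need no choice at all: the set of suitable indices is a nonempty subset of $\omega$, and we take its least element relative to the fixed enumeration of $D$. The remaining selections — picking $x_n \in X$ with $x_n \vartriangleleft p_n$ and $d(x_n, y_n) < \Delta_n$, where $p_n$ ranges over the uncountable set $P$ — are the ones for which Theorem \ref{ThmApproxRamsey} invokes the full axiom of choice. Here I would use the closedness hypothesis to build, once and for all, a function $g$ such that $g(p, y, \delta)$ is a point of $C_p := \{x \in X \mid x \vartriangleleft p\}$ inside the ball $B(y, \delta)$ whenever this intersection is nonempty: given such $p, y, \delta$, define by an explicit recursion on $\omega$ — using only the fixed enumeration of $D$ and the real-valued map $z \mapsto d(z, C_p)$ — a Cauchy sequence of elements of $D$ lying in the ball and whose distances to $C_p$ tend to $0$; since $C_p$ is closed, its limit is a canonical point of $C_p \cap B(y, \delta)$. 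This defines $g$ outright, with no use of choice beyond the $DC$ already in force, and with no measurability of $\vartriangleleft$ needed (which matters, since the corollary does not assume $\G$ analytic). In the situations arising from (ii)--(iv), the ball $B(y_n, \Delta_n)$ does meet $C_{p_n}$ precisely because the relevant finite sequence is $\vartriangleleft_\Delta p_n$, so letting the simulated player play $g(p_n, y_n, \Delta_n)$ legitimises every move; feeding these choice-free selections into the proof of Theorem \ref{ThmApproxRamsey} gives (i)--(iv), hence the corollary.

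The main obstacle is precisely this last selection: producing, in $ZF + DC$, a uniform rule that picks a point of $C_p$ near a prescribed point. This is exactly where closedness is essential — it is what makes a Cauchy sequence of ``guesses'' drawn from the countable dense set converge back into $C_p$ — and it explains why the corollary needs this extra assumption while Theorem \ref{ThmApproxRamsey} does not. Everything else is bookkeeping already carried out elsewhere: the reduction to $\G_\Delta$ is literally the argument of Theorem \ref{ThmApproxRamsey}, and the availability of the $\G_\Delta$-level conclusion in $ZF + DC + AD_\R$ is the content of Corollaries \ref{AdvAD} and \ref{StratGamma}.
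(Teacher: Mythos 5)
Your proposal is correct and follows essentially the same route as the paper: reduce to the countable Gowers space $\G_\Delta$, invoke Corollaries \ref{AdvAD} and \ref{StratGamma} there, and then check that the transfers (i)--(iv) of the proof of Theorem \ref{ThmApproxRamsey} go through in $ZF+DC$, the only real issue being a canonical, choice-free selection of a point of $C_p = \{x \in X \mid x \vartriangleleft p\}$ inside a prescribed ball --- which is exactly where the closedness hypothesis is used. The paper implements that selection slightly differently, by fixing (via countable choice) continuous surjections $f_{y,n} : \omega^\omega \longrightarrow B(y,\Delta_n)$ and taking the image of the leftmost branch of the pruned tree coding the closed set $f_{y,n}^{-1}(C_p)$; your Cauchy-sequence variant works just as well, provided you run it with respect to a fixed complete compatible metric (the given $d$ is only assumed compatible, so $d$-Cauchy sequences need not converge) and track the distance to $C_p \cap B(y,\delta)$ rather than to $C_p$ alone, so that the limit lands both in the closed set and in (the closure of) the ball.
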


\begin{proof}

We follow the proof of Theorem \ref{ThmApproxRamsey}, using Corollaries \ref{AdvAD} and \ref{StratGamma} to get that the set $\X \cap D^\omega$ is adversarially Ramsey and strategically Ramsey in $\G_\Delta$. The only thing to do is to verify that the proofs of (i)--(iv) can be carried out with only $DC$ instead of $AC$; as previously, we only do it for (i) and (ii). In the proof of (i), we have to be able to choose $y_n \in D$ such that $d(x_n, y_n) < \Delta_n$; this can be done by fixing, at the beginning of the proof, a well-ordering of $D$, and by choosing, each time, the least such $y_n$. In the proof of (ii), the difficulty is to choose $x_n$; so we have to prove that given $p \in P$, $n \in \omega$, and $y \in D$, if there exists $x \in X$ with $x \vartriangleleft p$ and $d(x, y) < \Delta_n$, then we are able to choose such an $x$ without using $AC$.

\smallskip

Using countable choices, for every $y \in D$ and $n \in \omega$, we choose $f_{y, n} : \omega^\omega \longrightarrow B(y, \Delta_n)$ a continuous surjection. Given $p$, $n$ and $y$ as in the previous paragraph, we can let $F=\{u \in \omega^\omega \mid f_{y, n}(u) \vartriangleleft p\}$, a closed subset of $\omega^\omega$. Consider $T \subseteq \omega^{< \omega}$ the unique pruned tree such that $F = [T]$. Then we can let $u$ be the leftmost branch of $T$ and let $x = f_{y, n}(u)$.

\end{proof}

\smallskip

We now introduce the pigeonhole principle in an approximate Gowers space and its consequences. We actually only need an approximate pigeonhole principle in this setting. For $q \in P$ and $A \subseteq X$, we abusively write $q \subseteq A$ to say that $\forall x \in X \; (x \vartriangleleft q \Rightarrow x \in A)$.

\begin{defin}

The approximate Gowers space $\G$ is said to satisfy the \emph{pigeonhole principle} if for every $p \in P$, $A \subseteq X$, and $\delta > 0$ there exists $q \leqslant p$ such that either $q \subseteq A^c$, or $q \subseteq (A)_\delta$.

\end{defin}

For example, by Theorem \ref{pigeonc0}, the canonical approximate Gowers space $\G_E$ satisfies the pigeonhole principle if and only if $E$ is $c_0$-saturated.

\smallskip

As for Gowers spaces, we have the following proposition:

\begin{prop}\label{ApproxConsPigeonhole}

Suppose that the approximate Gowers space $\G$ satisfies the pigeonhole principle. Let $\X \subseteq X^\omega$, $p \in P$ and $\Delta$ be a sequence of positive real numbers. If player \II{} has a strategy in $G_p$ to reach $\X$, then there exists $q \leqslant p$ such that player \I{} has a strategy in $F_q$ to reach $(\X)_\Delta$.

\end{prop}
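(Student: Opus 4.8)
The plan is to follow the proof of Proposition~\ref{consPigeonhole} as closely as possible, the only genuinely new point being that $X$ is now uncountable, so one cannot enumerate $\Seq(X)$ and index the auxiliary decreasing sequence of subspaces by finite sequences of points as was done there. I would get around this by fixing once and for all a countable dense set $D \subseteq X$, running all the combinatorics over $D$, and allowing a bounded error at each coordinate; that error is exactly what forces the conclusion to speak of $(\X)_\Delta$ rather than of $\X$. Throughout, fix a strategy $\tau$ for \II{} in $G_p$ to reach $\X$, and set $\varepsilon_n = \Delta_n / 3$.

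Exactly as in the proof of Proposition~\ref{consPigeonhole}, call a \emph{state} a partial play of $G_p$, empty or ending with a move of \II{}, during which \II{} follows $\tau$; a state \emph{realises} $(z_0, \dots, z_{n-1})$ if it has the form $(r_0, z_0, \dots, r_{n-1}, z_{n-1})$, and a point $x \in X$ is \emph{reachable} from a state $\st$ if $\tau(\st \concat r) = x$ for some $r \leqslant p$. Write $\Ast \subseteq X$ for the (possibly uncountable) set of points reachable from $\st$. The first step is the approximate analogue of Fact~\ref{factdeuxonze}: for every state $\st$, every $q \leqslant p$ and every $\delta > 0$ there is $r \leqslant q$ with $r \subseteq (\Ast)_\delta$. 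The proof is the same as before: apply the approximate pigeonhole principle to $\Ast$ and $\delta$ below $q$; if it returned $r$ with $r \subseteq (\Ast)^c$, then \I{} could legally continue $\st$ in $G_p$ by $r$, \II{} would answer $x = \tau(\st \concat r) \vartriangleleft r$, whence $x \in \Ast$ by definition and $x \in (\Ast)^c$ since $r \subseteq (\Ast)^c$, a contradiction.

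Next I would build, by induction on $|\bar y|$, a family $(\St_{\bar y})_{\bar y \in D^{<\omega}}$ where each $\St_{\bar y}$ is a \emph{countable} set of states realising sequences of length $|\bar y|$, every such sequence $(z_0, \dots, z_{|\bar y| - 1})$ satisfying $d(z_j, y_j) \leqslant 2\varepsilon_j$ for all $j$: put $\St_{\varnothing} = \{\varnothing\}$, and given $\St_{\bar y}$ and $y \in D$, for each $\st \in \St_{\bar y}$ such that $\Ast$ meets the ball of radius $2\varepsilon_{|\bar y|}$ around $y$, choose one point $z \in \Ast$ with $d(z, y) \leqslant 2\varepsilon_{|\bar y|}$ and one $r \leqslant p$ with $\tau(\st \concat r) = z$, and put $\st \concat (r, z)$ into $\St_{\bar y \concat y}$. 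Since $D^{<\omega}$ is countable and each $\St_{\bar y}$ is countable, $\bigcup_{\bar y} \St_{\bar y}$ is countable; enumerate it as $(\st_k)_{k \in \omega}$, with $\st_k$ realising a sequence of some length $n_k$. Using the approximate Fact repeatedly, build a $\leqslant$-decreasing sequence $(q_k)$ with $q_0 = p$ and $q_{k+1} \subseteq (A_{\st_k})_{\varepsilon_{n_k}}$, and take $q \leqslant p$ with $q \leqslant^* q_k$ for all $k$ (axiom 3 together with the remark after Definition~\ref{DefGowersSpaces}).

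Finally I would describe \I{}'s strategy in $F_q$ to reach $(\X)_\Delta$, maintaining at turn $i$ a pair $(\bar y^{(i)}, \st^{(i)})$ with $\bar y^{(i)} = (y_0, \dots, y_{i-1}) \in D^i$ and $\st^{(i)} \in \St_{\bar y^{(i)}}$, starting from $(\varnothing, \varnothing)$. At turn $i$, let $k$ be the index of $\st^{(i)}$, so $n_k = i$; since $q \leqslant^* q_{k+1}$, axiom 2 lets \I{} play some $u_i$ with $u_i \lessapprox q$ and $u_i \leqslant q_{k+1}$, and by axiom 5, $u_i \subseteq (A_{\st^{(i)}})_{\varepsilon_i}$. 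Whatever $x_i \vartriangleleft u_i$ player \II{} answers, one has $d(x_i, A_{\st^{(i)}}) \leqslant \varepsilon_i$; choose $y_i \in D$ with $d(x_i, y_i) \leqslant \varepsilon_i$, so that $d(y_i, A_{\st^{(i)}}) \leqslant 2\varepsilon_i$, which is precisely the condition guaranteeing that $\St_{\bar y^{(i)} \concat y_i}$ contains a state $\st^{(i+1)} := \st^{(i)} \concat (r, z_i)$ as built above, with $d(z_i, y_i) \leqslant 2\varepsilon_i$; set $\bar y^{(i+1)} = \bar y^{(i)} \concat y_i$. Then $d(x_i, z_i) \leqslant d(x_i, y_i) + d(y_i, z_i) \leqslant \varepsilon_i + 2\varepsilon_i = \Delta_i$. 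At the end of the run, $\bigcup_i \st^{(i)}$ is a total state, so $(z_i)_{i \in \omega} \in \X$; since $d(x_i, z_i) \leqslant \Delta_i$ for every $i$, the outcome $(x_i)_{i \in \omega}$ of the run lies in $(\X)_\Delta$, as required. I do not expect a deep obstacle; the delicate point is the bookkeeping that keeps the families $\St_{\bar y}$ countable while ensuring that, along any run of $F_q$, the state $\st^{(i+1)}$ needed at turn $i+1$ has really been inserted into $\St_{\bar y^{(i+1)}}$ — and this is what dictates the three-way split of the budget $\Delta_n = \varepsilon_n + \varepsilon_n + \varepsilon_n$ among the pigeonhole expansion, the rounding of $x_i$ into $D$, and the rounding of $y_i$ back onto a reachable point. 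As in Proposition~\ref{consPigeonhole}, no effectivity hypothesis on $\G$ is needed and the argument goes through in $\mathsf{ZF} + \mathsf{DC}$.
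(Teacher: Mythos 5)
Your proposal is correct and follows essentially the same route as the paper's proof: the approximate reachability fact via the pigeonhole principle, a tree of states indexed by finite sequences from a countable dense set $D$ with a $2\varepsilon$ tolerance, a diagonal subspace $q$, and a strategy for \I{} that keeps the realised sequence within $\Delta_i$ of the actual play at each coordinate. The only cosmetic differences are that you rescale to $\Delta/3$ at the outset where the paper proves the $3\Delta$ version and remarks that this suffices, and that your families $\St_{\bar y}$ are in fact singletons, matching the paper's single state $\st_n$ per element of $D^{<\omega}$.
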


Before proving this proposition, let us make some remarks. Using again the fact that $\left((\X)_\frac{\Delta}{2}\right)_\frac{\Delta}{2} \subseteq (\X)_\Delta$, we deduce from Proposition \ref{ApproxConsPigeonhole} the following corollary:

\begin{cor}\label{cor39}

Suppose that the approximate Gowers space $\G$ satisfies the pigeonhole principle. Let $\X \subseteq X^\omega$ be a strategically Ramsey set. Then for every $p \in P$ and every sequence $\Delta$ of positive real numbers, there exists $q \leqslant p$ such that in $F_q$, player \I{} either has a strategy to reach $\X^c$, or has a strategy to reach $(\X)_\Delta$.

\end{cor}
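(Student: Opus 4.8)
\emph{Proof proposal.} The plan is to read off Corollary \ref{cor39} directly from Proposition \ref{ApproxConsPigeonhole} and the definition of a strategically Ramsey set, exactly as the sentence preceding the statement already hints. Fix $p \in P$ and a sequence $\Delta$ of positive real numbers. The first move is to invoke the hypothesis that $\X$ is strategically Ramsey, but applied to the \emph{halved} sequence $\frac{\Delta}{2}$ rather than to $\Delta$ itself: this produces a subspace $q_0 \leqslant p$ such that either \I{} has a strategy in $F_{q_0}$ to reach $\X^c$, or \II{} has a strategy in $G_{q_0}$ to reach $(\X)_{\frac{\Delta}{2}}$.

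In the first case there is nothing more to do: $q = q_0$ already witnesses the first alternative of the conclusion, since a strategy for \I{} in $F_{q_0}$ reaching $\X^c$ is literally what is claimed. In the second case, I would feed the strategy of \II{} in $G_{q_0}$ to reach $(\X)_{\frac{\Delta}{2}}$ into Proposition \ref{ApproxConsPigeonhole}, applied with the set $(\X)_{\frac{\Delta}{2}}$, the subspace $q_0$, and the sequence $\frac{\Delta}{2}$ (legitimate since the pigeonhole principle is assumed throughout). This yields $q \leqslant q_0$ such that \I{} has a strategy in $F_q$ to reach $\bigl((\X)_{\frac{\Delta}{2}}\bigr)_{\frac{\Delta}{2}}$. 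Using the inclusion $\bigl((\X)_{\frac{\Delta}{2}}\bigr)_{\frac{\Delta}{2}} \subseteq (\X)_\Delta$, which is immediate from the triangle inequality for $d$, this strategy reaches $(\X)_\Delta$; and since $q \leqslant q_0 \leqslant p$ by transitivity of $\leqslant$, the subspace $q$ witnesses the second alternative. This exhausts the two cases.

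There is essentially no obstacle; the only point requiring care is the metric bookkeeping, namely remembering to use strategical Ramseyness with $\frac{\Delta}{2}$ and then Proposition \ref{ApproxConsPigeonhole} with $\frac{\Delta}{2}$, so that the two successive $\frac{\Delta}{2}$-expansions compose into a single $\Delta$-expansion — this is the same device already used at the beginning of the proof of Theorem \ref{ThmApproxRamsey}. No determinacy hypothesis and no new combinatorial input beyond Proposition \ref{ApproxConsPigeonhole} are needed.
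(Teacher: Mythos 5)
Your proposal is correct and is exactly the argument the paper intends: the sentence preceding the corollary signals precisely this combination of strategical Ramseyness applied with $\frac{\Delta}{2}$, Proposition \ref{ApproxConsPigeonhole} applied to $(\X)_{\frac{\Delta}{2}}$ with $\frac{\Delta}{2}$, and the inclusion $\bigl((\X)_{\frac{\Delta}{2}}\bigr)_{\frac{\Delta}{2}} \subseteq (\X)_\Delta$. Nothing is missing.
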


Conversely, it is not hard to see that if the conclusion of Corollary \ref{cor39} holds for all sets of the form $\{(x_n)_{n \in \omega} \in X^\omega \mid x_0 \in F\}$, where $F \subseteq X$ is closed, then the space $\G$ satisfies the pigeonhole principle.

\smallskip

Also observe that if $\G_0$ is a forgetful Gowers space, and if $\G_0'$ is the associated approximate Gowers space, then the pigeonhole principle in $\G_0$ is equivalent to the pigeonhole principle in $\G_0'$, and Proposition \ref{ApproxConsPigeonhole} and Corollary \ref{cor39} are respectively the same as Proposition \ref{consPigeonhole} and Corollary \ref{strongStratRamsey}.

\smallskip

We now prove Proposition \ref{ApproxConsPigeonhole}.

\begin{proof}[Proof of Proposition \ref{ApproxConsPigeonhole}]

Unlike the previous results about approximate Gowers spaces, here we cannot deduce this result from its exact version; thus, we adapt the proof of Proposition \ref{consPigeonhole}. To save notation, we show that there exists $q \leqslant p$ such that \I{} has a strategy in $F_q$ to reach $(\X)_{3\Delta}$.

\smallskip

We fix $\tau$ a strategy for \II{} in $G_p$ to reach $\X$. As usual, call a \emph{state} a partial play of $G_p$ either empty or ending with a move of \II{}, during which \II{} always plays according to her strategy. Say that a state \emph{realises} a sequence $(x_0, \ldots, x_{n - 1}) \in X^{< \omega}$ if it has the form $(p_0, x_0, \ldots, p_{n - 1}, x_{n - 1})$. The \emph{length} of the state $\st$, denoted by $|\st|$, is the length of the sequence it realises. Define in the same way the notion of a \emph{total state} (which is a total play of $G_p$) and of realisation for a total state; if an infinite sequence is realised by a total state, then it belongs to $\X$. Say that a point $x \in X$ is \emph{reachable} from a state $\st$ if there exists $r \leqslant p$ such that $\tau(\st \concat r) = x$. Denote by $\Ast$ the set of all points that are reachable from the state \nolinebreak $\st$. We will use the following fact.

\begin{fact}\label{factdeuxonzebis}

For every state $\st$ and for every $q \leqslant p$, there exists $r \leqslant q$ such that $r \subseteq (\Ast)_{\Delta_{|\stp|}}$.

\end{fact}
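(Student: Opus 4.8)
The plan is to adapt the proof of Fact~\ref{factdeuxonze} almost verbatim, replacing the exact pigeonhole principle by its approximate counterpart. Write $n = |\st|$, so that $\st$ realises some sequence $(x_0,\dots,x_{n-1}) \in X^{<\omega}$ of length $n$, and fix $q \leqslant p$. Applying the approximate pigeonhole principle to the subspace $q$, to the set $\Ast \subseteq X$, and to $\delta = \Delta_n$, we obtain $r \leqslant q$ such that either $r \subseteq (\Ast)^c$, or $r \subseteq (\Ast)_{\Delta_n}$.

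In the second case there is nothing left to prove, so I would show that the first case is impossible. Assume $r \subseteq (\Ast)^c$. Since $r \leqslant q \leqslant p$, it is a legal move for \I{} to play $r$ right after the partial play $\st$ in $G_p$; player \II{}, following $\tau$, then answers with the point $x = \tau(\st \concat r)$, which by the rules of $G_p$ must satisfy $x \vartriangleleft r$. As $r \subseteq (\Ast)^c$, this forces $x \notin \Ast$. On the other hand, $r$ itself witnesses (since $r \leqslant p$ and $\tau(\st \concat r) = x$) that $x$ is reachable from $\st$, i.e. $x \in \Ast$ --- a contradiction. Hence $r \subseteq (\Ast)_{\Delta_n} = (\Ast)_{\Delta_{|\st|}}$, which is the claim.

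I do not expect any genuine obstacle here: the argument is the exact transcription of Fact~\ref{factdeuxonze}, with $\subseteq_s$ replaced by $\subseteq$ and the exact pigeonhole principle replaced by the approximate one. The only point deserving attention is the bookkeeping of the index of the approximation sequence: the parameter fed to the pigeonhole principle must be precisely $\Delta_{|\st|}$, so that when this fact is later used inside an inductive construction of a $\leqslant$-decreasing sequence $(q_n)$ of subspaces (as in the proof of Proposition~\ref{consPigeonhole}), the $n$-th application consumes the $n$-th coordinate of $\Delta$; the accumulated error along an infinite run of $F_q$ then stays controlled coordinate by coordinate, which is exactly what yields an outcome in $(\X)_{3\Delta}$ in the surrounding proof.
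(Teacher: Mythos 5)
Your proof is correct and is essentially identical to the paper's: both apply the approximate pigeonhole principle to $\Ast$ with $\delta = \Delta_{|\stp|}$ and rule out the alternative $r \subseteq (\Ast)^c$ by the same contradiction, having \I{} play $r$ after $\st$ and observing that \II{}'s answer $x = \tau(\st \concat r)$ must lie both in $\Ast$ and in $(\Ast)^c$. Nothing to add.
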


\begin{proof}

Otherwise, by the pigeonhole principle, there would exist $r \leqslant q$ such that $r \subseteq (\Ast)^c$. But then \I{} could play $r$ after the partial play $\st$, and \II{} would answer, according to her strategy, by $x = \tau(\st \concat r)$ that should satisfy $x \vartriangleleft r$. Since $r \subseteq (\Ast)^c$, this would imply that $x \in (\Ast)^c$. But we also have, by the definition of $\Ast$, that $x \in \Ast$, a contradiction.

\end{proof}

For two sequences $s, t \in X^{\leqslant \omega}$ of the same length, denote by $d(s, t) \leqslant \Delta$ the fact that for every $i < |s|$, we have $d(s_i, t_i) \leqslant \Delta_i$. Let $D \subseteq X$ be a countable dense set and let $(s_n)_{n \in \omega}$ be an enumeration of $D^{< \omega}$ such that if $s_m \subseteq s_n$, then $m \leqslant n$. We define, for some $n \in \omega$, a state $\st_n$ realising a sequence $t_n$ satisfying $d(s_n, t_n) \leqslant 2\Delta$, by induction in the following way: $\st_0 = \varnothing$ and for $n \geqslant 1$, letting $s_n = s_m \concat y$ for some $m < n$ and some $y \in X$,

\begin{samepage}
\begin{itemize}

\item if $\st_m$ has been defined and if there exists $z \in X$ reachable from $\st_m$ such that $d(y, z) \leqslant 2\Delta_{|\stp_m|}$, then choose a $r \leqslant p$ such that $z = \tau(\st_m \concat r)$ and put $t_n = t_m \concat z$ and $\st_n = \st_m \concat (r, z)$,

\item otherwise, $\st_n$ is not defined.

\end{itemize}
\end{samepage}

\noindent Observe that if $\st_n$ is defined and if $s_m \subseteq s_n$, then $\st_m$ is defined, and we have $\st_m \subseteq \st_n$ and $t_m \subseteq t_n$.

\smallskip

We now define a $\leqslant$-decreasing sequence $(q_n)_{n \in \omega}$ of elements of $P$ in the following way: $q_0 = p$ and

\begin{itemize}

\item if $\st_n$ is defined, then $q_{n + 1}$ is the result of the application of Fact \ref{factdeuxonzebis} to $\st_n$ and $q_n$;

\item $q_{n + 1} = q_n$ otherwise.

\end{itemize}

Finally, let $q \leqslant p$ be such that for every $n \in \omega$, $q \leqslant^* q_n$. We will show that \I{} has a strategy in $F_q$ to reach $(\X)_{3\Delta}$. We describe this strategy on the following play of $F_q$:

\smallskip

\begin{tabular}{ccccccc}
\textbf{I} & $u_0$ & & $u_1$ & & $\hdots$ & \\
\textbf{II} & & $x_0$ & & $x_1$ & & $\hdots$ 
\end{tabular}

\smallskip

\noindent We moreover suppose that at the same time as this game is played, we build a sequence $(n_i)_{i \in \omega}$ of integers, with $n_0 = 0$ and $n_i$ being defined during the $i$\textsuperscript{th} turn, such that $(s_{n_i})_{i \in \omega}$ is increasing and for every $i \in \omega$, $|s_{n_i}| = i$, $\st_{n_i}$ is defined, and $d(s_{n_i}, (x_0, \ldots, x_{i - 1})) \leqslant \Delta$. This will be enough to conclude: indeed, $\bigcup_{i \in \omega} \st_{n_i}$ will be a total state realising the sequence $\bigcup_{i \in \omega} t_{n_i}$, showing that this sequence belongs to $\X$; and since $d\left(\bigcup_{i \in \omega} t_{n_i}, (x_i)_{i \in \omega}\right) \leqslant d\left(\bigcup_{i \in \omega} t_{n_i}, \bigcup_{i \in \omega} s_{n_i}\right) + d\left(\bigcup_{i \in \omega} s_{n_i}, (x_i)_{i \in \omega}\right) \leqslant 3 \Delta$, we will have that $(x_i)_{i \in \omega} \in (\X)_{3\Delta}$.

\smallskip

Suppose that the $i$\textsuperscript{th} turn of the game has just been played, so the sequence $(x_0, \ldots, x_{i - 1})$ and the integers $n_0, \ldots, n_i$ has been defined. Then by construction of $q_{n_i + 1}$, we have that $q_{n_i + 1} \subseteq (\Asti{n_i})_{\Delta_{|\stp_{n_i}|}}$. We let \I{} play some $u_i$ such that $u_i \lessapprox q$ and $u_i \leqslant q_{n_i + 1}$. Then $u_i \subseteq (\Asti{n_i})_{\Delta_{|\stp_{n_i}|}}$. Now, suppose that \II{} answers by $x_i$. Then we choose a $y_i \in D$ such that $d(x_i, y_i) \leqslant \Delta_i$ and we choose $n_{i + 1}$ in such a way that $s_{n_{i + 1}} = s_{n_i} \concat y_i$. So we have that $y_i \in (\Asti{n_i})_{2\Delta_{|\stp_{n_i}|}}$; this shows that $\st_{n_{i + 1}}$ has been defined. Moreover we have $d(s_{n_{i + 1}}, (x_0, \ldots, x_i)) \leqslant \Delta$ as wanted, what ends the proof.

\end{proof}

Again, this proof can be done in $ZF + DC$, even if the space $\G$ is not supposed effective.

\bigskip\bigskip

\section{Eliminating the asymptotic game}\label{par5}

In this section, we provide a tool to deduce, from a statement of the form ``player \I{} has a strategy in $F_p$ to reach $\X$'', a conclusion of the form ``in some subspace, every sequence satisfying some structural condition is in $\X$''. This tool can be seen as a generalization of Lemma \ref{asympexact}. It will allow us to get, from Ramsey results with game-theoretical conclusions, stronger results having the same form as Mathias--Silver's theorem or Gowers' theorem.

\smallskip

We will actually not add any structure on the set of points, but rather provide a tool enabling, in each concrete situation, to build this structure in the way we want. Our result could be stated in the setting of approximate Gowers spaces, but we prefer to state it in the more general setting of \emph{approximate asymptotic spaces}, since it could be useful in itself in situations where we have no natural Gowers space structure.

\smallskip

\begin{defin}

An \emph{approximate asymptotic space} is a quintuple $\A = \{P, X, d, \lessapprox, \vartriangleleft\}$, where $P$ is a nonempty set, $(X, d)$ is a nonempty separable metric space, $\lessapprox$ is a quasiorder on $P$, and $\vartriangleleft \; \subseteq X \times P$ is a binary relation, satisfying the following properties:

\begin{enumerate}

\item for every $p, q, r \in P$, if $q \lessapprox p$ and $r \lessapprox p$, then there exists $u \in P$ such that $u \lessapprox q$ and $u \lessapprox r$;

\item for every $p \in P$, there exists $x \in X$ such that $x \vartriangleleft p$;

\item for every every $x \in X$ and every $p, q \in P$, if $x \vartriangleleft p$ and $p \lessapprox q$, then $x \vartriangleleft q$.

\end{enumerate}

\end{defin}

Every approximate Gowers space has a natural structure of approximate asymptotic space. In an approximate asymptotic space, define the notion of expansion, and the asymptotic game, in the same way as in an approximate Gowers space.

\smallskip

In the rest of this section, we fix $\A = \{P, X, d, \lessapprox, \vartriangleleft\}$ an approximate asymptotic space. Recall that a subset of $X$ is said to be \emph{precompact} if its closure in $X$ is compact. In what follows, for $K \subseteq X$ and $p \in P$, we abusively write $K \vartriangleleft p$ to say that the set $\{x \in K \mid x \vartriangleleft p\}$ is dense in $K$.

\begin{defin}

A \emph{system of precompact sets} for $\A$ is a set $\mathcal{K}$ of precompact subsets of $X$, equipped with an associative binary operation $\oplus$, satisfying the following property: for every $p \in P$, and for every $K, L \in \mathcal{K}$, if $K \vartriangleleft p$ and $L \vartriangleleft p$, then $K \oplus L \vartriangleleft p$.

\smallskip

If $(\K, \oplus)$ is a system of precompact sets for $\A$ and if $(K_n)_{n \in \omega}$ is a sequence of elements of $\K$, then:

\begin{itemize}

\item for $A \subseteq \omega$ finite, denote by $\bigoplus_{n \in A} K_n$ the sum $K_{n_1} \oplus \ldots \oplus K_{n_k}$, where $n_1, \ldots, n_k$ are the elements of $A$ taken in increasing order;

\item a \emph{block sequence} of $(K_n)$ is,  by definition, a sequence $(x_i)_{i \in \omega} \in X^\omega$ for which there exists an increasing sequence of nonempty sets of integers $A_0 < A_1 < A_2 < \ldots$ such that for every $i \in \omega$, we have $x_i \in \bigoplus_{n \in A_i} K_n$.

\end{itemize}

\noindent Denote by $\bs((K_n)_{n \in \omega})$ the set of all block sequences of $(K_n)$.

\end{defin}

We can already give some examples. For the Mathias--Silver space $\Sil$, let $\K_\Sil$ be the set of all singletons, and define the operation $\oplus_\Sil$ by $\{m\} \oplus_\Sil \{n\} = \{max(m, n)\}$. Then $(\K_\Sil, \oplus_\Sil)$ is a system of precompact sets. If $(m_i)_{i \in \omega}$ is an increasing sequence of integers, then the block sequences of  $(\{m_i\})_{i \in \omega}$ are exactly the subsequences of $(m_i)$.

\smallskip

Now, for a Banach space $E$ with a basis, consider the canonical approximate Gowers space $\G_E$. Let $\K_E$ be the set of all unit spheres of finite-dimensional subspaces of $E$. Define the operation $\oplus_E$ on $\K_E$ by $S_F \oplus_E S_G = S_{F + G}$. Then $(\K_E, \oplus_E)$ is a system of precompact sets for $\G_E$. If $(x_n)_{n \in \omega}$ is a (normalized) block sequence of $E$, then for every $n$, $S_{\R x_n} = \{x_n, -x_n\}$ is in $\K_E$, and the block sequences of $(S_{\R x_n})_{n \in \omega}$ in the sense of $\K$ are exactly the (normalized) block sequences of $(x_n)$ in the Banach-theoretical sense. More generally, it is often useful to study the block sequences of sequences of the form $(S_{F_n})_{n \in \omega}$, where $(F_n)_{n \in \omega}$ is a FDD of a closed, infinite-dimensional subspace $F$ of $E$ (that is, a sequence such that every $x \in F$ can be written in a unique way as a sum $\sum_{n = 0}^\infty x_n$, where for every $n$, $x_n \in F_n$).

\smallskip

In general, in an asymptotic space, a sequence $(K_n)_{n \in \omega}$ of elements of a system of precompact sets can be seen as another kind of subspace. Sometimes, some subspaces of the type $(K_n)_{n \in \omega}$ can be represented as elements of $P$; that is, for example, the case in the Mathias--Silver space and in the canonical approximate Gowers space over a Banach space with a basis, as we just saw. We now introduce a theorem enabling us to build sequences $(K_n)_{n \in \omega}$ such that $\bs((K_n)_{n \in \omega}) \subseteq \X$, knowing that player \I{} has a strategy in an asymptotic game to reach $\X$. First, we have to define a new game.

\begin{defin}

Let $(\K, \oplus)$ be a system of precompact sets for the space $\mathcal{A}$, and $p \in P$. The \emph{strong asymptotic game below $p$}, denoted by $SF_p$, is defined as follows:

\smallskip

\begin{tabular}{ccccccc}
\textbf{I} & $p_0$ & & $p_1$ & & $\hdots$ & \\
\textbf{II} & & $K_0$ & & $K_1$ & & $\hdots$ 
\end{tabular}

\smallskip

\noindent where the $K_n$'s are elements of $\K$, and the $p_n$'s are elements of $P$. The rules are the following:

\begin{itemize}

\item for \textbf{I}: for all $n \in \omega$, $p_n \lessapprox p$;

\item for \textbf{II}: for all $n \in \omega$, $K_n \vartriangleleft p_n$.

\end{itemize}

\noindent The outcome of the game is the sequence $(K_n)_{n \in \omega} \in \K^\omega$.

\end{defin}

\begin{thm}\label{AbstractApproxExact}

Let $(\K, \oplus)$ be a system of precompact sets on the space $\mathcal{A}$, $p \in P$, $\X \subseteq X^\omega$, and $\Delta$ be a sequence of positive real numbers. Suppose that player \I{} has a strategy in $F_p$ to reach $\X$. Then he has a strategy in $SF_p$ to build a sequence $(K_n)_{n \in \omega}$ such that $\bs((K_n)_{n \in \omega}) \subseteq (\X)_\Delta$.

\end{thm}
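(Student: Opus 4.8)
The plan is a fusion argument combined with the standard device of anticipating all future branchings by finite $\varepsilon$-nets of the precompact sets involved.

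First I would fix a strategy $\tau$ for \I{} in $F_p$ forcing the outcome into $\X$, and, as in the proof of Fact~\ref{Fact34}, regard it as a total map $X^{<\omega}\to P$ with $\tau(s)\lessapprox p$ for all $s$, having the property that any $(w_i)_{i\in\omega}$ with $w_i\vartriangleleft\tau(w_0,\dots,w_{i-1})$ for every $i$ lies in $\X$. The key observations are: for any finite nonempty $A\subseteq\omega$, the block $\bigoplus_{n\in A}K_n$ lies again in $\K$ (associativity of $\oplus$), and if $K_n\vartriangleleft q$ for every $n\in A$ then $\bigoplus_{n\in A}K_n\vartriangleleft q$ (the defining property of a system of precompact sets, iterated); moreover $K_n\vartriangleleft p_n$ together with $p_n\lessapprox q$ gives $K_n\vartriangleleft q$ by axiom~3 of approximate asymptotic spaces. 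So if \I{} manages, in $SF_p$, to keep the subspaces he plays below suitable $\tau$-values, then any point of a block $\bigoplus_{n\in A}K_n$ is $\vartriangleleft$-below that $\tau$-value up to arbitrarily small error, since that block is precompact and $\{x\in\bigoplus_{n\in A}K_n : x\vartriangleleft q\}$ is dense in it. Accordingly I fix once and for all, for every precompact $L\subseteq X$, every $q$ with $L\vartriangleleft q$ and every $\varepsilon>0$, a finite $\varepsilon$-net $\nu(L,q,\varepsilon)\subseteq\{x\in L : x\vartriangleleft q\}$ of $L$ (it exists, as $\{x\in L:x\vartriangleleft q\}$ is dense in the precompact $L$).

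Next, for finite nonempty blocks $A_0<A_1<\dots<A_{k-1}$, once the $K_n$ with $n\in\bigcup_j A_j$ have been played, I define a finite tree $\mathcal{T}(A_0,\dots,A_{k-1})$: its level-$j$ nodes ($0\le j\le k$) are the sequences $(w_0,\dots,w_{j-1})$ with $w_i\in\nu\bigl(\bigoplus_{n\in A_i}K_n,\ \tau(w_0,\dots,w_{i-1}),\ \Delta_i/2\bigr)$ for all $i<j$ (which requires $\bigoplus_{n\in A_i}K_n\vartriangleleft\tau(w_0,\dots,w_{i-1})$), the children of a level-$j$ node $(w_0,\dots,w_{j-1})$ with $j<k$ being the extensions $w_j$ in the corresponding net; call the tree \emph{complete} if every node of level $<k$ has a child. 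Player \I{}'s strategy in $SF_p$ is then defined by induction on the turn $n$: having seen $K_0,\dots,K_{n-1}$, play $p_n$ equal to a common $\lessapprox$-lower bound (iterating axiom~1) of $p_{n-1}$ (when $n\ge 1$) together with all $\tau(w_0,\dots,w_{k-1})$ for $(w_0,\dots,w_{k-1})$ ranging over the branches of the trees $\mathcal{T}(A_0,\dots,A_{k-1})$ with $\bigcup_j A_j\subseteq\{0,\dots,n-1\}$. There are finitely many such trees and each is finitely branching of finite height, so this is a finite family of subspaces, all $\lessapprox p$; hence $p_n$ is well defined, $p_n\lessapprox p$, and $(p_n)_n$ is $\lessapprox$-decreasing.

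The heart of the proof is the following claim, by induction on $n$: for all blocks $A_0<\dots<A_{k-1}$ with $\bigcup_j A_j\subseteq\{0,\dots,n-1\}$, the tree $\mathcal{T}(A_0,\dots,A_{k-1})$ is complete, and each branch $(w_0,\dots,w_{k-1})$ of it satisfies $p_l\lessapprox\tau(w_0,\dots,w_{k-1})$ for every $l$ with $\max A_{k-1}<l\le n$. Granting the hypothesis at stage $n-1$, completeness of a tree with $\max A_{k-1}\le n-1$ amounts, at each level-$j$ node $(w_0,\dots,w_{j-1})$ with $j<k$, to $\bigoplus_{n\in A_j}K_n\vartriangleleft\tau(w_0,\dots,w_{j-1})$; but $(w_0,\dots,w_{j-1})$ is a branch of $\mathcal{T}(A_0,\dots,A_{j-1})$, which has $\max A_{j-1}<\min A_j\le n-1$, so by the inductive hypothesis that subtree is complete and $p_l\lessapprox\tau(w_0,\dots,w_{j-1})$ for $\max A_{j-1}<l\le n-1$, in particular for every $l\in A_j$; hence $K_l\vartriangleleft p_l\lessapprox\tau(w_0,\dots,w_{j-1})$ gives $K_l\vartriangleleft\tau(w_0,\dots,w_{j-1})$ for all $l\in A_j$, and the system-of-precompact-sets property yields $\bigoplus_{n\in A_j}K_n\vartriangleleft\tau(w_0,\dots,w_{j-1})$, so the node has a child. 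The estimate $p_l\lessapprox\tau(w_0,\dots,w_{k-1})$ for $\max A_{k-1}<l\le n$ then follows from the inductive hypothesis for $l<n$ and from the choice of $p_n$ for $l=n$. This closes the induction. Finally, given $(x_i)_{i\in\omega}\in\bs((K_n)_{n\in\omega})$ witnessed by blocks $A_0<A_1<\dots$ with $x_i\in\bigoplus_{n\in A_i}K_n$, one builds along these (complete) trees a branch tracking $(x_i)$: given $w_0,\dots,w_{i-1}$, pick $w_i\in\nu\bigl(\bigoplus_{n\in A_i}K_n,\ \tau(w_0,\dots,w_{i-1}),\ \Delta_i/2\bigr)$ with $d(x_i,w_i)\le\Delta_i/2$ (possible, since this is a $\Delta_i/2$-net of a set containing $x_i$). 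Then $w_i\vartriangleleft\tau(w_0,\dots,w_{i-1})$ for all $i$, so $(w_i)_{i\in\omega}\in\X$, while $d(x_i,w_i)\le\Delta_i$, so $(x_i)_{i\in\omega}\in(\X)_\Delta$; thus $\bs((K_n)_{n\in\omega})\subseteq(\X)_\Delta$, as required.

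The main obstacle is the circularity woven into the construction: the subspaces \I{} is to play in $SF_p$ are specified through trees whose completeness depends on those very subspaces being refined enough. The crux is to break this by the single induction on $n$ above, organized so that completeness of the trees relevant at stage $n$ only appeals to completeness of strictly shorter trees already settled at earlier stages. A secondary, purely bookkeeping concern is to keep every family over which one takes a common $\lessapprox$-lower bound finite, which is precisely why one restricts to finite nets of the precompact blocks; I also tacitly assume the members of $\K$ are nonempty so that the nets $\nu$ are nonempty.
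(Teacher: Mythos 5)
Your argument is correct and follows essentially the same route as the paper's: fix finite nets of the precompact blocks consisting of points $\vartriangleleft$ a suitable subspace, have \I{} play at each turn a common $\lessapprox$-lower bound of the finitely many $\tau$-values of all anticipated approximating sequences, and then track an arbitrary block sequence through the nets. The only difference is bookkeeping: the paper indexes the net of $\bigoplus_{j\in A_i}K_j$ by the already-played subspace $p_{\min(A_i)}$ rather than by $\tau(w_0,\dots,w_{i-1})$, which makes the anticipated set $S_{(K_0,\dots,K_{n-1})}$ well-defined outright and dispenses with your tree-completeness induction.
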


\begin{proof}

For each $K \in \K$, each $q \in P$ such that $K \vartriangleleft q$, and each $i \in \omega$, let $\mathfrak{N}_{i, q}(K)$ be a $\Delta_i$-net in $K$ (that is, a finite subset of $K$ such that $K \subseteq (\mathfrak{N}_{i, q}(K))_{\Delta_i}$), such that for every $x \in \mathfrak{N}_{i, q}(K)$, we have $x \vartriangleleft q$. We fix $\tau$ a strategy for \I{} in $F_p$, enabling him to reach $\X$. As in the proofs of Fact \ref{Fact34} and Lemma \ref{asympexact}, we consider that in $F_p$, \II{} is allowed to play against the rules, but that she immedately loses if she does; so we will view $\tau$ as a mapping from $X^{< \omega}$ to $P$, such that for every $s \in X^{< \omega}$, we have $\tau(s) \lessapprox p$.

\smallskip

Let us describe a strategy for \I{} in $SF_p$ on a play $(p_0, K_0, p_1, K_1, \ldots)$ of this game. Suppose that the first $n$ turns have been played, so the $p_j$'s and the $K_j$'s, for $j < n$, are defined. Moreover suppose that the sequence $(p_j)_{j < n}$ is $\lessapprox$-decreasing. Let $S_{(K_0, \ldots, K_{n - 1})} \subseteq X^{< \omega}$ be the set of all finite sequences $(y_0, \ldots, y_{k - 1})$ satisfying the following property: there exists an increasing sequence $A_0 < \ldots < A_{k - 1}$ of nonempty subsets of $n$ such that for every $i < k$, we have $y_i \in \mathfrak{N}_{i, p_{\min(A_i)}}(\oplus_{j \in A_i} K_j)$. Then $S_{(K_0, \ldots, K_{n - 1})}$ is finite and for every $s \in S_{(K_0, \ldots, K_{n - 1})}$, we have $\tau(s) \lessapprox p$, so by iterating the axiom 1. in the definition of an approximate asymptotic space, we can find $p_n \lessapprox p$ such that for every $s \in S_{(K_0, \ldots, K_{n - 1})}$, we have $p_n \lessapprox \tau(s)$. Moreover, if $n \geqslant 1$, we can choose $p_n$ such that $p_n \lessapprox p_{n - 1}$. The strategy of \I{} will consist in playing this $p_n$.

\smallskip

Now suppose that this play has been played completely; we show that $\bs((K_n)_{n \in \omega}) \subseteq (\X)_\Delta$. Let $(x_i)_{i \in \omega}$ be a block sequence of $(K_n)$ and $A_0 < A_1 < \ldots$ be a sequence of nonempty subsets of $\omega$ such that for every $i$, we have $x_i \in \bigoplus_{n \in A_i} K_n$. For every $i \in \omega$, we have $\left(\bigoplus_{n \in A_i} K_n\right) \vartriangleleft p_{\min(A_i)}$, so $\mathfrak{N}_{i, p_{\min(A_i)}}\left(\bigoplus_{n \in A_i} K_n\right)$ has been defined and we can choose a $y_i$ in it such that $d(x_i, y_i) \leqslant \Delta_i$. We have to show that $(x_i)_{i \in \omega} \in (\X)_\Delta$, so it is enough to show that $(y_i)_{i \in \omega} \in \X$. Knowing that $\tau$ is a strategy for \I{} in $F_p$ to reach $\X$, it is enough to show that, letting $q_i = \tau(y_0, \ldots, y_{i - 1})$ for all $i$, in the following play of $F_p$, \II{} always respects the rules:

\smallskip

\begin{tabular}{ccccccc}
\textbf{I} & $q_0$ & & $q_1$ & & $\hdots$ & \\
\textbf{II} & & $y_0$ & & $y_1$ & & $\hdots$ 
\end{tabular}

\smallskip

\noindent In other words, we have to show that for all $k \in \omega$, we have $y_k \vartriangleleft q_k$.

\smallskip

So let $k \in \omega$. We let $n_0 = \min A_k$. Since the sets $A_0, \ldots, A_{k - 1}$ \nolinebreak are subsets of $n_0$, we have that $(y_0, \ldots, y_{k - 1}) \in S_{(K_0, \ldots, K_{n_0 - 1})}$, and therefore $p_{n_0} \lessapprox \tau(y_0, \ldots, y_{k - 1}) = q_k$.
But $y_k \in \mathfrak{N}_{k, p_{n_0}}\left(\bigoplus_{n \in A_k} K_n\right)$, so $y_k \vartriangleleft p_{n_0}$, so $y_k \vartriangleleft q_k$, as wanted.
\end{proof}

%In most cases, what we use in the conclusion of theorem \ref{AbstractApproxExact} is the fact that there \emph{exists} a sequence $(K_n)_{n \in \omega}$ of elements of $\K$, all of them being below $p$, such that $(K_n)_{n \in \omega} \rightarrow (\X)_\Delta$. However is can be useful to know that there are plenty of those sequences.

Again, under slight restrictions, we can prove Theorem \ref{AbstractApproxExact} without using the full axiom of choice. Say that the approximate asymptotic space $\mathcal{A}$ is \emph{effective} if %$X$ is Polish, if for every $p \in P$, the set $\{x \in X \mid x \vartriangleleft p\}$ is closed in $X$, and if in the axiom 1. of the definition of an approximate asymptotic space, the subspace $u$ can be chosen in an effective way, that is,
there exist a function $f : P^2 \longrightarrow P$ such that for every $q, r \in P$, if there exist $p \in P$ such that $q \lessapprox p$ and $r \lessapprox p$, then we have $f(q, r) \lessapprox q$ and $f(q, r) \lessapprox r$. Effective approximate Gowers spaces, when seen as approximate asymptotic spaces, are effective. We will show that if $\A$ is an effective approximate asymptotic space, if $X$ is an analytic subset of a Polish space, if for every $p \in P$, the set $\{x \in X \mid x \vartriangleleft p\}$ is closed in $X$, and if every element of $\K$ is compact, then Theorem \ref{AbstractApproxExact} for $\mathcal{A}$ and $\K$ can be shown in $ZF + DC$. In the proof of Theorem \ref{AbstractApproxExact}, $AC$ is only used: 

\begin{itemize}

\item to choose $p_n$ such that for every $s \in S_{(K_0, \ldots, K_{n - 1})}$, we have $p_n \lessapprox \tau(s)$, and such that $p_n \lessapprox p_{n - 1}$ if $n \geqslant 1$;

\item to choose the nets $\mathfrak{N}_{i, q}(K)$;

\item and to choose $y_i \in \mathfrak{N}_{i, p_{\min(A_i)}}\left(\bigoplus_{n \in A_i} K_n\right)$ such that $d(x_i, y_i) \leqslant \Delta_i$.

\end{itemize}

The choice of the $p_n$'s can be done without $AC$ as soon as the space $\mathcal{A}$ is effective. For the choice of the nets and of the $y_i$'s, first, observe that, given $K \in \K$ and $q \in P$, since $\{x \in X \mid x \vartriangleleft q\}$ is closed in $X$, we have that $K \vartriangleleft q$ if and only if $K \subseteq \{x \in X \mid x \vartriangleleft q\}$; so $\mathfrak{N}_{i, q}(K)$ can actually be an arbitrary $\Delta_i$-net in $K$, and does not need to depend on $q$. Thus, to be able to chose these nets and the $y_i$'s without $AC$, it is enough to show that we can choose, without $AC$, a $\Delta_i$-net $\mathfrak{N}_i(K)$ in $K$ and a wellordering $\prec_{i, K}$ on it, for every $K \in \K$ and every $i \in \omega$. This can be done in the following way. Let $\varphi : \omega^\omega \rightarrow X$ be a continuous surjection. If $K \in \K$, then $\varphi^{- 1}(K)$ has the form $[T_K]$, where $T_K$ is a pruned tree on $\omega$. We can easily build, without choice, a countable dense subset of $[T_K]$, for example the set of all the $u_s$'s where for every $s \in T_K$, $u_s$ is the leftmost branch of $T_K$ satisfying $s \subseteq u_s$. Since $T_K$ can naturally be wellordered, then this dense subset can also be wellordered. Pushing forward by $\varphi$, this enables us to get, for every $K \in \K$, a countable dense subset $D_K \subseteq K$ with a wellordering $\prec_K$. From this we can naturally wellorder the set of all finite subsets of $D_K$, take for $\mathfrak{N}_i(K)$ the least finite subset of $D_K$ that is a $\Delta_i$-net in $K$ and take for $\prec_{i, K}$ the restriction of $\prec_K$ to $\mathfrak{N}_i(K)$.

%Observe that, another time, if $X$ is Polish and if for every $p \in P$, the set $\{x \in X \mid x \vartriangleleft p\}$ is closed in $X$, then we do not need the full axiom of choice to prove theorem \ref{AbstractApproxExact}. Indeed, in this case, saying that $K \vartriangleleft p$ means that $K \subseteq \{x \in X \mid x \vartriangleleft p\}$, so the nets $\mathfrak{N}_{n, q}(K)$ can be choosen independantely of $q$. It is, thus, enough to show that we can choose, for every $K \in \K$ and every $n \in \omega$, a $\Delta_n$-net $\mathfrak{N}_n(K)$ in $K$. This can be done in the following way. Let $\varphi : \omega^\omega \rightarrow X$ be a continuous surjection. If $K \in \K$, then $\varphi^{- 1}(K)$ has the form $[T_K]$, where $T_K$ is a pruned tree on $\omega$. We can easily build, without choice, a countable dense subset of $[T_K]$, for example by taking, for every $s \in T$, $u_s$ the leftmost branch of $T_K$ satisfying $s \subseteq u_s$. As $T$ can naturally be wellordered with order-type $\omega$, then this dense subset can also be wellordered with order-type $\omega$. Pushing forward by $\varphi$, this enables us to get, for every $K \in \K$, a countable dense subset $D_K \subseteq K$ with a wellordering $<_K$ of order-type $\omega$. From this we can naturally wellorder the set of finites subsets of $D_K$, and take for $\mathfrak{N}_n(K)$ the least finite subset of $D_K$ that is a $\Delta_n$-net in $K$.

%by choosing a countable dense subset $D$ of $X$, wellordered by a relation $<$ of order-type $\omega$ and by taking for $\mathfrak{N}_n(K)$ the set of

\smallskip

Theorem \ref{AbstractApproxExact}, combined with the results of the last section and with the last remark, gives us the following corollary:

\begin{cor}[Abstract Gowers' theorem]\label{AbstractGowers}

Let $\G = (P, X, d, \leqslant, \leqslant^*, \vartriangleleft)$ be an approximate Gowers space, equipped with a system of precompact sets $(\K, \oplus)$. Let $\X \subseteq X^\omega$, and suppose that we are in one of the following situations:

\begin{itemize}

\item we work in $ZFC$, and $\X$ is analytic;

\item we work in $ZFC$, $\G$ is analytic and $\X$ is $\exists \Gamma$, for some suitable class $\Gamma$ of subsets of Polish spaces such that every $\Gamma$-subset of $\R^\omega$ is determined;

\item we work in $ZF + DC + AD_\R$, the space $\G$ is effective, $P$ is a subset of Polish space, for every $p \in P$, the set $\{x \in X \mid x \vartriangleleft p\}$ is closed in $X$, and every element of $\K$ is compact.

\end{itemize}

\noindent Let $p \in P$ and $\Delta$ be a sequence of positive real numbers. Then there exists $q \leqslant p$ such that:

\begin{itemize}

\item either player \I{} has a strategy in $SF_q$ to build a sequence $(K_n)_{n \in \omega}$ such that $\bs((K_n)_{n \in \omega}) \subseteq \X^c$;

\item or player \II{} has a strategy in $G_q$ to reach $(\X)_\Delta$.

\end{itemize}

\noindent Moreover, if $\G$ satisfies the pigeonhole principle, then the second conclusion can be replaced with the following stronger one: player \I{} has a strategy in $SF_q$ to build a sequence $(K_n)_{n \in \omega}$ such that $\bs((K_n)_{n \in \omega}) \subseteq (\X)_\Delta$.

\end{cor}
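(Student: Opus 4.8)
The plan is to assemble results already established. On one side of the desired dichotomy we want a strategy of \I{} in the strong asymptotic game $SF_{q}$; Theorem~\ref{AbstractApproxExact} produces exactly such a strategy from a strategy of \I{} in the ordinary asymptotic game $F_{q}$, and the strategic Ramsey theory of Section~\ref{par4} --- the second part of Theorem~\ref{ThmApproxRamsey}, or the second part of Corollary~\ref{ApproxGamma}, or Corollary~\ref{ApproxAD}, according to which of the three listed situations holds --- supplies the latter (for $\X^{c}$) on one side of a dichotomy. For the final assertion I would in addition invoke Proposition~\ref{ApproxConsPigeonhole}, which under the pigeonhole principle turns a strategy of \II{} in $G_{q}$ into a strategy of \I{} in $F_{q'}$. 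The one point requiring attention is the bookkeeping of metric expansions: Theorem~\ref{AbstractApproxExact} and Proposition~\ref{ApproxConsPigeonhole} each consume an $\eta$-slack, whereas the first alternative of the statement must hold for $\X^{c}$ with no slack at all, so the strategic Ramsey property must be applied not to $\X$ itself but to a pre-shrunk expansion $(\X)_{\eta}$.

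First I would verify that in each of the three situations every expansion $(\X)_{\eta}$, with $\eta$ a sequence of positive reals, is again strategically Ramsey. If $\X$ is analytic then so is $(\X)_{\eta}$ (as was already noted in the proof of Theorem~\ref{ThmApproxRamsey}), and the second part of Theorem~\ref{ThmApproxRamsey} applies. If $\X\in\exists\Gamma$, then $(\X)_{\eta}\in\exists\Gamma$ as well: it is the projection along $X^{\omega}$ of the intersection of the Borel preimage $\{((x_{n})_{n},(y_{n})_{n})\mid(y_{n})_{n}\in\X\}$ of $\X$ with the closed set $\{((x_{n})_{n},(y_{n})_{n})\mid\forall n\ d(x_{n},y_{n})\leqslant\eta_{n}\}$, and $\exists\Gamma$ is closed under Borel preimages, finite intersections, and projection along a Polish space; so the second part of Corollary~\ref{ApproxGamma} applies, $\G$ being analytic. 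In the third situation Corollary~\ref{ApproxAD} makes every subset of $X^{\omega}$ strategically Ramsey, so there is nothing to check; note that the hypotheses listed there are precisely those needed both for Corollary~\ref{ApproxAD} and, together with the compactness of the members of $\K$, for the $ZF+DC$ version of Theorem~\ref{AbstractApproxExact} recorded after its proof.

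Now fix $p\in P$ and a sequence $\Delta$, and set $\eta=\Delta/4$. Applying the strategic Ramsey property of $(\X)_{\eta}$ to the sequence $\eta$ and the subspace $p$ gives $q\leqslant p$ such that either \I{} has a strategy in $F_{q}$ to reach $\bigl((\X)_{\eta}\bigr)^{c}$, or \II{} has a strategy in $G_{q}$ to reach $\bigl((\X)_{\eta}\bigr)_{\eta}$. In the first case, Theorem~\ref{AbstractApproxExact} applied to the set $\bigl((\X)_{\eta}\bigr)^{c}$, the sequence $\eta$, and the given system of precompact sets (viewed on the approximate asymptotic space underlying $\G$) yields a strategy of \I{} in $SF_{q}$ producing $(K_{n})_{n\in\omega}$ with $\bs((K_{n})_{n\in\omega})\subseteq\bigl(((\X)_{\eta})^{c}\bigr)_{\eta}\subseteq\X^{c}$, the last inclusion being the elementary fact that the $\eta$-expansion of the complement of the $\eta$-expansion of $\X$ cannot meet $\X$. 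In the second case $\bigl((\X)_{\eta}\bigr)_{\eta}\subseteq(\X)_{2\eta}\subseteq(\X)_{\Delta}$, so \II{} already has a strategy in $G_{q}$ to reach $(\X)_{\Delta}$. This is the asserted dichotomy.

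For the final assertion, assume $\G$ satisfies the pigeonhole principle and revisit the second case above, where \II{} has a strategy in $G_{q}$ to reach $\bigl((\X)_{\eta}\bigr)_{\eta}$, hence to reach $(\X)_{2\eta}$. Proposition~\ref{ApproxConsPigeonhole} applied to this set and the sequence $\eta$ gives $q'\leqslant q$ with a strategy of \I{} in $F_{q'}$ to reach $\bigl((\X)_{2\eta}\bigr)_{\eta}$, hence to reach $(\X)_{3\eta}$; Theorem~\ref{AbstractApproxExact} applied to $(\X)_{3\eta}$ and $\eta$ then gives a strategy of \I{} in $SF_{q'}$ producing $(K_{n})_{n\in\omega}$ with $\bs((K_{n})_{n\in\omega})\subseteq\bigl((\X)_{3\eta}\bigr)_{\eta}\subseteq(\X)_{4\eta}=(\X)_{\Delta}$; taking $q'=q$ in the first case, this is the stronger dichotomy. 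Every step is valid in $ZFC$ in the first two situations and in $ZF+DC$, hence in $ZF+DC+AD_{\R}$, in the third, since the effective form of Theorem~\ref{AbstractApproxExact}, Corollary~\ref{ApproxAD}, and Proposition~\ref{ApproxConsPigeonhole} are all available there. I expect no genuine obstacle: all the real content already sits in Theorem~\ref{AbstractApproxExact} and in the strategic Ramsey machinery, and the only step where care is mandatory is the decision to feed the strategic Ramsey theorem the shrunk set $(\X)_{\eta}$ rather than $\X$ --- applying it to $\X$ directly would leave only $\bs((K_{n})_{n\in\omega})\subseteq(\X^{c})_{\eta}$ on the first side, strictly weaker than the stated $\bs((K_{n})_{n\in\omega})\subseteq\X^{c}$.
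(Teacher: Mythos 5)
Your proof is correct and follows the route the paper intends: the paper gives no details beyond citing Theorem \ref{AbstractApproxExact} together with the strategic Ramsey results of Section \ref{par4}, and your assembly of these --- including the pre-shrinking of $\X$ to $(\X)_{\Delta/4}$ before invoking the strategic Ramsey property (the same device used in the proof of Theorem \ref{ThmApproxRamsey} to get an exact conclusion on the $\X^c$ side), and the use of Proposition \ref{ApproxConsPigeonhole} for the pigeonhole refinement --- is exactly the required bookkeeping. The expansion arithmetic and the verification that each cited result applies in each of the three axiomatic situations are all correct.
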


Now see how to deduce Mathias--Silver's theorem, Gowers' Theorem \ref{GowersThm}, and Gowers' theorem for $c_0$ (Theorem \ref{Gowersc0theorem}) from Corollary \ref{AbstractGowers}.

\begin{itemize}

\item For Mathias--Silver's theorem, work in the Mathias--Silver space $\Sil$ with the system $(\K_\Sil, \oplus_\Sil)$ of precompact sets introduced before. Let $M$ be an infinite set of integers, and $\X \subseteq \elinf$ be analytic, that we will consider as a subset of $\omega^\omega$ by identifying infinite subsets of $\omega$ with increasing sequences of integers. Applying Corollary \ref{AbstractGowers} to $\X$, to $M$, and to the constant sequence equal to $\frac{1}{2}$, we get an infinite $N \subseteq M$ such that either \I{} has a strategy in $SF_N$ to build $(\{n_i\})_{i \in \omega}$ with $\bs((\{n_i\})_{i \in \omega}) \subseteq \X$, or he has one to build $(\{n_i\})_{i \in \omega}$ with $\bs((\{n_i\})_{i \in \omega}) \subseteq \X^c$. Observe that in $SF_N$, \II{} can always play in such a way that the sequence $(n_i)_{i \in \omega}$ is increasing. So in the first case, we get an increasing sequence $(n_i)_{i \in \omega}$ of elements of $N$ such that every block sequence of $(\{n_i\})_{i \in \omega}$ belongs to $\X$, or in other words, such that every infinite subset of $\{n_i \mid i \in \omega\}$ belongs to $\X$; and in the second case, in the same way, we get an infinite subset of $N$ every infinite subset of whose belongs to $\X^c$. 

\item For Gowers' theorem, let $E$ be a Banach space with a Schauder basis and work in the canonical approximate Gowers space $\G_E$ with the system $(\K_E, \oplus_E)$ of precompact sets introduced before. Given $Y \in P$, in $SF_Y$, whatever \I{} plays, \II{} can always ensure that the outcome will have the form $(S_{\R y_n})_{n \in \omega}$, where $(y_n)_{n \in \omega}$ is a block sequence. So given $\X \subseteq [E]$ analytic, $X \subseteq E$ a block subspace, and $\Delta$ a sequence of positive real numbers, Corollary \ref{AbstractGowers} gives us either a block sequence $(y_n)_{n \in \omega}$ in $X$ such that $\bs((S_{\R y_n})_{n \in \omega}) \subseteq \X^c$, or a block subspace $Y \subseteq X$ such that \II{} has a strategy in $G_Y$ to reach $(\X)_{\frac\Delta 2}$. In the first case, denoting by $Y$ the block subspace generated by the sequence $(y_n)$, this precisely means that $[Y] \subseteq \X^c$. In the second case, we have to be careful because the Gowers' game of the space $\G_E$ is not exactly the same as this defined in the introduction: in the one of the introduction, player \II{} is required to play vectors with finite support forming a block sequence, while in the one of $\G_E$, she can play any vector in the unit sphere of the subspace played by \I{}. This is not a real problem as, by perturbating the vectors given by her strategy, player \II{} can reach $\X_\Delta$ playing vectors with finite support; and without loss of generality, we can assume that the subspace $Y_n$ played by \I{} at the $(n+1)$\textsuperscript{th} turn is choosen small enough to force \II{} to play a $y_n$ such that $\supp(y_{n - 1}) < \supp(y_n)$.

\item To deduce Gowers' theorem for $c_0$, the method is the same except that this time, $\G_E$ satisfies the pigeonhole principle so Corollary \ref{AbstractGowers} will give us a conclusion with a strong asymptotic game in both sides.

\end{itemize}

%\smallskip

To finish this section, let us show on an example that the hypothesis ``\I{} has a strategy in $F_p$ to reach $\X$'' does not always imply that for some subspace $q$, every sequence below $q$ satisfying some natural structural condition (for instance, being block) is in $\X_\Delta$. To see this, consider the Rosendal space $\Ros_K = (P, E \setminus \{0\}, \subseteq, \subseteq^*, \in)$ over a field $K$. We have the following fact:

\begin{fact}\label{Fact66}

Suppose that $K$ is a finite field. Let $\X \subseteq (E \setminus \{0\})^\omega$ and $X \in P$, and suppose that \I{} has a strategy in $F_X$ to reach $\X$. Then there exists a block subspace $Y \subseteq X$ such that every block sequence of $Y$ is in $\X$.

\end{fact}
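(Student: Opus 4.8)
The plan is to mimic the proof of Lemma~\ref{asympexact}: there a strategy of \I{} in the asymptotic game over $\omega$ was trivialised by passing to a sufficiently sparse subset, and here I will diagonalise against \I{}'s strategy to obtain a block subspace $Y \subseteq X$ so sparse that \emph{every} block sequence of $Y$ arises as a legal run of $F_X$ in which \I{} follows his strategy; its outcome then necessarily lies in $\X$. The new ingredient, replacing the triviality that a finite set has finitely many subsets, is the finiteness of $K$, which guarantees that $\spa$ of finitely many vectors is a finite set.

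First I would fix a block sequence $(x_n)_{n \in \omega}$ spanning $X$ and prove the analogue of ``a cofinite set contains a final segment'': every block subspace $W \lessapprox X$ contains a tail $\langle x_N, x_{N+1}, \ldots\rangle$ of this basis for some $N$. To see this, write $W = \langle w_0, w_1, \ldots\rangle$ with $(w_n)$ a block sequence and expand each $w_n$ in the basis $(x_i)$; since both sequences are block with respect to $(e_i)$, there are finite index sets $I_0 < I_1 < \cdots$ with $w_n \in \spa\{x_i : i \in I_n\}$ and no cancellation at the extremities of the supports, so that $W \subseteq \spa\{x_i : i \in \bigcup_n I_n\}$ and $W \cap \spa\{x_i : i \in I_n\} = \spa\{w_n\}$. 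Finite codimension of $W$ in $X$ then forces $\bigcup_n I_n$ to be cofinite and all but finitely many $I_n$ to be singletons, so for $n$ large $w_n = c_n x_{j_n}$ with the $j_n$ enumerating a tail of $\omega$, giving the claim. Next I would view the given strategy $\tau$ of \I{} as a map sending a finite sequence $s$ of nonzero vectors to a subspace $\tau(s) \lessapprox X$ (allowing \II{} to break the rules but lose, as in the proof of Fact~\ref{Fact34}), and fix for each $s$ an integer $f(s)$ with $\langle x_{f(s)}, x_{f(s)+1}, \ldots\rangle \subseteq \tau(s)$.

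Then I would build $Y$ by a simple induction exploiting the finiteness of $K$. Build a block sequence $(y_k)_{k \in \omega}$ with each $y_k$ a basis vector $x_{N_k}$: having chosen $y_0, \ldots, y_{k-1}$, note that $\spa\{y_0, \ldots, y_{k-1}\}$ is a \emph{finite} set, hence so is the set $\mathcal{B}_k$ of all finite block sequences (including the empty one) with terms in it; take $N_k$ larger than $N_{k-1}$ and larger than every $f(s)$ for $s \in \mathcal{B}_k$, and set $y_k = x_{N_k}$, $Y = \langle y_0, y_1, \ldots\rangle$. For the verification, let $(z_j)_{j \in \omega}$ be a block sequence of $Y$; since $(y_i)$ is a block sequence there are finite sets $A_0 < A_1 < \cdots$ with $\max A_j < \min A_{j+1}$ and $z_j \in \spa\{y_i : i \in A_j\} \setminus \{0\}$. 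I claim $(z_j)$ is the outcome of a legal run of $F_X$ in which \I{} follows $\tau$ and \II{} plays $z_0, z_1, \ldots$, so it suffices that $z_j \in \tau(z_0, \ldots, z_{j-1})$ for all $j$: indeed $z_0, \ldots, z_{j-1} \in \spa\{y_i : i < \min A_j\}$, so $(z_0, \ldots, z_{j-1}) \in \mathcal{B}_{\min A_j}$ and $f(z_0, \ldots, z_{j-1}) \leq N_{\min A_j}$; while $z_j$ involves only $y_i = x_{N_i}$ with $i \geq \min A_j$, hence $N_i \geq N_{\min A_j} \geq f(z_0, \ldots, z_{j-1})$, whence $z_j \in \langle x_{f(z_0,\ldots,z_{j-1})}, x_{f(z_0,\ldots,z_{j-1})+1}, \ldots\rangle \subseteq \tau(z_0, \ldots, z_{j-1})$. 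Thus $(z_j) \in \X$.

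The only non-routine point — the main obstacle — is the ``tail lemma'' for finite-codimensional block subspaces; this is exactly where one uses that $P$ consists of block subspaces rather than arbitrary infinite-dimensional subspaces (cf.\ the remark following Definition~\ref{DefGowersSpaces}), and it is what makes the sparse diagonalisation work. The finiteness of $K$ enters only, but essentially, in ensuring there are finitely many partial plays to anticipate at each stage of the construction: over an infinite field a single vector already spans infinitely many vectors and \I{}'s strategy could react differently to each scalar multiple, so no sparsification could succeed — consistent with the failure of the statement for infinite fields discussed below.
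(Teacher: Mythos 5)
Your argument is correct, and it reaches the conclusion by a more elementary, self-contained route than the paper. The paper disposes of this fact in one line by feeding a specific system of precompact sets into the general Theorem \ref{AbstractApproxExact}: it takes $\K$ to be the family of sets $F \setminus \{0\}$ with $F \subseteq E$ finite-dimensional (these are \emph{finite} precisely because $K$ is finite, which is where the hypothesis enters), with $\oplus$ the sum of subspaces, and then lets \II{} steer the strong asymptotic game so that the resulting sequence $(K_n)_{n\in\omega}$ spans a block subspace. Your proof is essentially an unwinding of that machinery in this special case: your finite set $\mathcal{B}_k$ of partial plays to anticipate plays the role of the finite set $S_{(K_0, \ldots, K_{n-1})}$ in the proof of Theorem \ref{AbstractApproxExact}, and the diagonalisation against $\tau$ is the same. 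What you do genuinely differently is to replace the abstract refinement step there (iterating axiom 1.\ of asymptotic spaces to get $p_n \lessapprox \tau(s)$ for every $s$ in a finite set) by a concrete structural lemma: every finite-codimensional block subspace of $X$ contains a tail of the block basis of $X$. That lemma is correct --- your codimension count $|G| + \sum_n (|I_n| - 1)$, with $G$ the set of indices missed by the $I_n$'s, is the right one, and it does use that $P$ consists of block subspaces rather than arbitrary subspaces --- and it buys you a construction in which the $y_k$ are genuine basis vectors $x_{N_k}$ and legality of \II{}'s moves reduces to an inequality between integers, exactly as in Lemma \ref{asympexact}. The trade-off is that your argument is specific to the Rosendal space, whereas the paper's route isolates the only feature actually needed (compactness of the building blocks) and therefore also covers, e.g., the Banach-space situation. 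The remaining verifications in your write-up check out, including the passage from block sequences of $Y$ with respect to $(e_i)$ to block sequences of the basis $(y_k)$ (no cancellation between disjoint supports) and the treatment of the empty partial play.
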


\begin{proof}

Let $\K$ be the set of all sets of the form $F \setminus \{0\}$, where $F$ is a finite-dimensional subspace of $E$. Since the field $K$ is finite, the elements of $\K$ are finite too. For $F, G \subseteq E$ finite-dimensional, let $(F \setminus \{0\}) \oplus (G \setminus \{0\}) = (F + G) \setminus \{0\}$. Then $(\K, \oplus)$ is a system of precompact sets. The conclusion follows from Theorem \ref{AbstractApproxExact} applied to this system, using the same method as previously.

%Observe that given a $Y \in P$, in the game $SF_Y$, player \II{} can always ensure that the outcome will have the form $(Kx_n \setminus \{0\})_{n \in \omega}$, where $(x_n)_{n \in \omega}$ is a block sequence; moreover, with this notation, the block sequences of $(Kx_n \setminus \{0\})$ in the sense of $\K$ are exactly the block sequences of $(x_n)$ in the algebraic sense. So the conclusion immediately follows from theorem \ref{AbstractApproxExact}.

\end{proof}

Observe that this proof does not work when $K$ is infinite, and actually, this result is false. Let us give a counterexample. Let $(e_i)_{i < \omega}$ be the basis of $E$ with respect to whose block subspaces are taken, and let $\varphi : K^* \rightarrow \omega$ be a bijection. For $x \in E \setminus \{0\}$, let $N(x)$ be the first nonzero coordinate of $x$. Let $\mathcal{Y} = \{(x, y) \in (E\setminus\{0\})^2 \mid \varphi(N(x)) < \min \supp(y)\}$ and $\X = \{(x_n)_{n \in \omega} \in (E\setminus\{0\})^\omega \mid (x_0, x_1) \in \mathcal{Y}\}$. Then player \I{} has a strategy in $F_E$ to reach $\X$; this strategy is illustrated on the following diagram:

\smallskip

\begin{tabular}{ccccccc}
\textbf{I} & $E$ & & $\spa(\{e_i \mid i > \varphi(N(x))\})$ & \\
\textbf{II} & & $x$ & & $y$ 
\end{tabular}

\smallskip

\noindent But there is no block subspace $Y$ of $E$ such that every block sequence in $Y$ belongs to $\X$. Indeed, given $Y \subseteq E$ a block subspace generated by a block sequence $(y_n)_{n \in \omega}$, we can take $\lambda \in K$ such that $\varphi(N(\lambda y_0)) = \min \supp(y_1)$, and we have $(\lambda y_0, y_1, y_2, \ldots) \notin \X$. Note that, just like the counterexample to the pigeonhole principle presented in section \ref{par3}, this kind of counterexamples cannot be avoided by working in the projective Rosendal space:
in this space, a similar construction works taking for $N(x)$ the quotient of the last nonzero coordinate of $x$ by its first nonzero coordinate.

\smallskip

Therefore, in lots of cases, the ``subspaces'' of the form $(K_n)_{n \in \omega}$, where the $K_n$'s are elements of a system of precompact sets, cannot always be identified with ``genuine'' subspaces (i.e. elements of $P$): we always need a form of compactness for that.

\bigskip
\footnotesize
\noindent\textit{Acknowledgments.}
I would like to thank Stevo Todor\v{c}evi\'c who proposed me to work on the adversarial Ramsey principle and made me know Kastanas' game, thereby giving me all the tools to prove it. I would also like to thank the Isaac Newton Institute for Mathematical Sciences, Cambridge, for support and hospitality during the programme ``Mathematical, Foundational and Computational Aspects of the Higher Infinite'' where work on this paper was undertaken. This work was supported by EPSRC grant numbers EP/K032208/1 and EP/R014604/1. In particular, I met there Jordi L\'opez-Abad who suggested me to give an approximate version of my result in a metric setting, what I do in section \ref{par4}; I am grateful to him for that. I also thank Christian Rosendal who suggested me research directions in order to apply this result, and Valentin Ferenczi who gave me a very nice example of approximate Gowers space, that will be studied in a forthcoming paper with him and Wilson Cuellar Carrera. Finally, I would like to thank the anonymous referee for his useful comments that helped me much to improve the presentation of my results.

    \par
  \bigskip
    \textsc{\footnotesize Institut de Math\'ematiques de Jussieu-Paris Rive Gauche, Universit\'e Paris Diderot, Bo\^ite Courrier 7012, 75205 Paris Cedex 13, FRANCE}
    
    \textit{Current adress:} Universit\"at Wien, Institut f\"ur Mathematik, Kurt G\"odel Research Center, Augasse 2-6, UZA 1 -- Building 2, 1090 Wien, AUSTRIA
    
    \textit{Email address:} \texttt{noe.de.rancourt@univie.ac.at}


\begin{thebibliography}{plain}

%% Use the widest label as parameter.

%% Reference items may be numbered or have labels of your choice.
%% The author's surname PRECEDES the initial of the first name
%% The issue number is only given when the issues are paginated separately.
%% In book titles, first letters are capitalized.
%% Only journal volume numbers are boldfaced.

%%%%%%%%%%% To ease editing, use normal size:

\normalsize
%\baselineskip=17pt

%%%%%%%%%%%%%

\bibitem[1]{todorcevicbleu} 
Argyros, S. A., Todor\v{c}evi\'c, S.:
Ramsey methods in analysis.
Adv. Courses in Math. CRM Barcelona, Birkh\"auser Verlag, Basel (2005)

\bibitem[2]{BagariaLopezAbad2}
Bagaria, J., L\'opez-Abad, J.: 
Determinacy and weakly {R}amsey sets in {B}anach spaces.
Trans. Amer. Math. Soc. \textbf{354}, 1327–-1349 (2002)

\bibitem[3]{RancourtRamseyII} 
de Rancourt, N.:
More on adversarially and strategically {R}amsey sets.
In preparation.

\bibitem[4]{fero}
Ferenczi, V., Rosendal, C.: 
{B}anach spaces without minimal subspaces.
J. Funct. Anal. \textbf{257}, 149--193 (2009)

\bibitem[5]{Friedman}
Friedman, H. M.: 
Higher set theory and mathematical practice.
Ann. Math. Logic \textbf{2}, 325--357 (1971)

\bibitem[6]{GowersLipschitz}
Gowers, W. T.: 
Lipschitz functions on classical spaces.
Europ. J. Combin. \textbf{13}, 141--151 (1992)

\bibitem[7]{GowersRamsey}
Gowers, W. T.: 
An infinite {R}amsey theorem and some {B}anach-space dichotomies.
Ann. Math. \textbf{156}, 797--833 (2002)

\bibitem[8]{harringtonkechris}
Harrington, L. A., Kechris, A. S.: 
On the determinacy of games on ordinals.
Ann. Math. Logic \textbf{20}, 109--154 (1981)

\bibitem[9]{kastanas}
Kastanas, I. G.: 
On the {R}amsey property for sets of reals.
J. Symb. Logic \textbf{48}, 1035--1045 (1983)

\bibitem[10]{kechris} 
Kechris, A. S.:
Classical descriptive set theory.
Graduate Texts in Math. 156, Springer-Verlag, New York (1995)

\bibitem[11]{KomorowskiTomczakJaegermann}
Komorowski, R. A., Tomczak-Jaegermann, N.: 
Banach spaces without local unconditional structure.
Israel J. Math. \textbf{89}, 205--226 (1995)

\bibitem[12]{Mansfield} 
Mansfield, R.: 
A footnote to a theorem of {S}olovay on recursive encodability.  
In: Logic Colloquium '77 (Macintyre, A., Pacholski, L., Paris, J., eds.),
North-Holland, Amsterdam, 195--198 (1978)

\bibitem[13]{MartinMeasurable}
Martin, D. A.: 
Measurable cardinals and analytic games.
Fund. Math. \textbf{66}, 287--291 (1970)

\bibitem[14]{mathias}
Mathias, A. R. D.: 
On a generalization of {R}amsey's theorem.
Notices Amer. Math. Soc. \textbf{15}, 931 (1968 (Abstract 68T-E19))

\bibitem[15]{milman}
Milman, V. D.: 
Geometric theory of {B}anach spaces, part {II}: {G}eometry of the unit sphere.
Russian Math. Surveys (6) \textbf{26}, 79--163 (1971)

\bibitem[16]{OdellSchlumprecht}
Odell, E., Schlumprecht, T.: 
The distortion problem.
Acta Math. \textbf{173}, 259--281 (1994)

\bibitem[17]{Pelczar}
Pelczar, A. M.: 
Subsymmetric sequences and minimal spaces.
Proc. Amer. Math. Soc. \textbf{131}, 765--771 (2003)

\bibitem[18]{RosendalGowers}
Rosendal, C.: 
An exact {R}amsey principle for block sequences.
Coll. Math. \textbf{61}, 25--36 (2010)

\bibitem[19]{Rosendaladverse}
Rosendal, C.: 
Determinacy of adversarial {G}owers games.
Fund. Math. \textbf{227}, 163--178 (2014)

\bibitem[20]{silver}
Silver, J.: 
Every analytic set is {R}amsey.
J. Symb. Logic \textbf{35}, 60--64 (1970)

\bibitem[21]{todorcevicorange} 
Todor\v{c}evi\'c, S.:
Introduction to {R}amsey spaces.
Ann. Math. Studies 174, Princeton University Press, Princeton (2010)

\bibitem[22]{Woodin} 
Woodin, W. H.: 
On the consistency strength of projective uniformization.  
In: Proc. Herbrand Symposium. Logic Colloquium '81 (Stern, J., ed.),
North-Holland, Amsterdam, 365--384 (1982)

\end{thebibliography}
\end{document}